\providecommand\@dotsep{5}
\def\listtodoname{List of Todos}
\def\listoftodos{\@starttoc{tdo}\listtodoname}
\newtheorem{theorem}{Theorem}[section]
\newtheorem{proposition}[theorem]{Proposition}
\newtheorem{corollary}[theorem]{Corollary}
\newtheorem{lemma}[theorem]{Lemma}
\newtheorem{step}{Step}
  \theoremstyle{definition}
\newtheorem{definition}[theorem]{Definition}
\newtheorem{remark}[theorem]{Remark}
\newcommand{\Z}{\mathbb{Z}}
\newcommand{\calF}{{\mathcal F}}
\newcommand{\calG}{{\mathcal G}}
\newcommand{\calH}{{\mathcal H}}
\newcommand{\nbeq}{\begin{equation}}
\newcommand{\neeq}{\end{equation}}
\newcommand{\beq}{\begin{equation*}}
\newcommand{\eeq}{\end{equation*}}
\DeclareMathOperator{\vcd}{vcd}
\DeclareMathOperator{\gd}{gd}
\DeclareMathOperator{\mcg}{Mod}
\DeclareMathOperator{\Aut}{Aut}
\DeclareMathOperator{\Diff}{Diff}
\begin{document}

\title[Commensurators of mapping class groups]%{On normalizers  and commensurators of  abelian subgroups of mapping class groups}%
{Commensurators  of  abelian subgroups and the virtually abelian dimension of mapping class groups}

%    Information for first author

%    \thanks will become a 1st page footnote.
%\thanks{The author was supported by DGAPA-UNAM postdoctoral grant.}

%    Information for second author

\dedicatory{To Daniel Juan-Pineda on the occasion of his 60th birthday.}

\author[Rita Jiménez Rolland]{Rita Jiménez Rolland}

%    Address of record for the research reported here
\address{Instituto de Matemáticas, Universidad Nacional Autónoma de México. Oaxaca de Juárez, Oaxaca, México 68000}
\email{rita@im.unam.mx}

\author[Porfirio L. León Álvarez ]{Porfirio L. León Álvarez}

%    Address of record for the research reported here
\address{Instituto de Matemáticas, Universidad Nacional Autónoma de México. Oaxaca de Juárez, Oaxaca, México 68000}
\email{porfirio.leon@im.unam.mx}

\author[Luis Jorge Sánchez Saldaña]{Luis Jorge S\'anchez Salda\~na}
%    Address of record for the research reported here
\address{Departamento de Matemáticas, Facultad de Ciencias, Universidad Nacional Autónoma de México, Mexico}
\email{luisjorge@ciencias.unam.mx}

%    General info
%\subjclass[2010]{Primary 20F34, 20J05, 20F65}

\date{}

%\dedicatory{This paper is dedicated to our advisors.}

\keywords{Mapping class group, classifying spaces, families of subgroups, normalizers of abelian subgroups, commensurators of abelian subgroups, virtually abelian dimension}

\begin{abstract} 
%Consider $S$ be a compact, possibly non-orientable, connected surface possibly with boundary and punctures with negative Euler characteristic, and let $\mcg(S)$ the corresponding mapping class group. We prove that for any virtually abelian subgroup $H$ of $\mcg(S)$, there is a subgroup $H'$ commensurable with $H$ such that the commensurator of $H$ equals the normalizer of $H'$. 

%We show, for all $n\geq 2$, that $\vcd(\mcg(S)) +n$ is an upper bound for the $\calF_n$-geometric dimension   of $\mcg(S)$, where  $\mathcal{F}_{n}$ be the family  of all  the subgroups of  $\mcg(S)$ containing a subgroup    of finite index isomorphic to  $\mathbb{Z}^{r}$ for some $r=0,1,2,\dots ,n$. We conjecture our bound is sharp. Te case $n=0$ was worked out by Aramayona--Martínez-Pérez and Harer, while the case $n=1$ was previously proved by Juan-Pineda--Trujillo-Negrete, and Nucinkis--Petrosyan.

Let $\mcg(S)$  be the  mapping class group of a compact connected orientable surface $S$, possibly with punctures and boundary components,   with negative Euler characteristic. We prove that for any  infinite virtually abelian subgroup $H$ of $\mcg(S)$, there is a subgroup $H'$ commensurable with $H$ such that the commensurator of $H$ equals the normalizer of $H'$. As a consequence we give, for each $n \geq 2$, an upper bound for the geometric dimension of $\mcg(S)$ for the family of abelian subgroups of rank bounded by $n$. These results generalize work by Juan-Pineda--Trujillo-Negrete and Nucinkis--Petrosyan for the virtually cyclic case.% and when the surface $S$ is orientable.
\end{abstract}
\maketitle

\tableofcontents
\section{Introduction}

\noindent Given any group $G$ and two subgroups $H$ and $K$ of $G$, we say that $H$ and $K$ are {\it commensurable} if $H\cap K$ has finite index in both $H$ and $K$.  The {\it commensurator of $H$ in $G$}, which we denote by $N_G[H]$ throughout the paper, is by definition the set of all elements $g\in G$ such  that $H$ and $gHg^{-1}$ are commensurable. It is clear from the definition that the {\it normalizer} $N_G(H)$ of $H$ in $G$ is contained in $N_G[H]$ and, as shown in \cite[Example 2.6]{LW12}, this inclusion might be strict. The commensurator $N_G[H]$ only depends on the commensuration class of $H$, that is, if $H'$ is group commensurable with $H$, then $N_G[H]=N_G[H']$. In the literature we can find several results of the following type: 

\begin{center}
 {\it Given a group $G$ and a subgroup $H$ of $G$, }
% \begin{description}
\item[$(\star)$] {\it%there is a finite index subgroup $H'$ of $H$ such that $N_G[H]=N_G(H')$. That is, 
the commensurator of $H$ in $G$ can be realized as the normalizer in $G$ of a subgroup $H'$ in the same commensuration class of $H$. }
%\end{description}

\end{center}

Among the examples of groups and subgroups exhibiting this phenomenon are the following: any subgroup of a virtually polycyclic group \cite[Theorem~10]{MR4138929}, infinite virtually cyclic subgroups of the mapping class group of an orientable compact surface with negative Euler characteristic \cite[Proposition 4.8]{JPT16} and  \cite[Proposition 5.9]{Nucinkis:Petrosyan}, certain abelian subgroups of $\mathrm{Out}(F_n)$ \cite{Guerch2022}, infinite virtually cyclic subgroups of linear groups \cite[Lemma 3.2]{DP15}, parabolic subgroups and infinite virtually cyclic subgroups not contained in a parabolic subgroup of relatively hyperbolic groups \cite[Theorem 2.6]{LO07}.% etc.

Let $S$ be a connected compact surface with possibly finitely many punctures. The {\it mapping class group of $S$} is the group $\mcg(S)$ of isotopy classes of (orientation preserving, if $S$ is orientable) diffeomorphisms of $S$ that restrict to the identity on the boundary $\partial S$. In this paper we show that %contribute to the aforementioned phenomenon for 
all infinite virtually abelian subgroups of $\mcg(S)$ satisfy the aforementioned phenomenon ($\star$), when the surface $S$ is orientable and has negative Euler characteristic. More precisely, bringing together \Cref{norrmalizer:commensurator:equal} and  \Cref{commensu:equal:normalizer} %, and \Cref{Prop:normalizers:equal:comm:nonorien}, 
we obtain the following theorem. % the mapping class group of a compact surface with negative Euler characteristic. 

%Concerning the structure of abelian subgroups of $\mcg(S)$, it was shown by Birman--Lubotzky--McCarthy in \cite[Theorem A]{BirmanLubotzkyMcCarthy83}, that any abelian subgroup of $\mcg(S)$ is finitely generated and the maximum possible rank of an abelian subgroup was computed for surfaces $S$ with negative Euler characteristic; see \cref{maximal:rank:abelian:subgroup} below.  Analogue results for the non-orientable case were proved recently by Kuno in \cite{kuno2019}.% , additionally,  they computed the maximum possible rank of an abelian subgroup. 

%\textcolor{red}{Si eliges L comensurable con H, puede que mi observatión 1.2 no funcione } {\color{blue} Para el caso de superficies sin frontera el resultado que tenemos sí es con subgrupos de índice finito, quizá podemos enfatizarlo en la Nota y restringir las observaciones a casos en donde eso aplique }

\begin{theorem}\label{thm:normalizer:commensurator:general:case}
 Let $S$ be a connected compact orientable surface possibly with a finite number of punctures and with negative Euler characteristic. Let $H$ be a virtually abelian  subgroup of $\mcg(S)$ of rank $k\ge 1$. Then there is a subgroup  
 $H'$ of $\mcg(S)$ which is commensurable with $H$ 
 %\textcolor{red}{of finite index} 
 such that \[N_{\mcg(S)}[H]=N_{\mcg(S)}[H']=N_{\mcg(S)}(H').\]
\end{theorem}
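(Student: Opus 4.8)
The plan is to reduce the statement to the two results quoted immediately before it. Since $N_{\mcg(S)}[H]$ depends only on the commensuration class of $H$, the equality $N_{\mcg(S)}[H]=N_{\mcg(S)}[H']$ is automatic as soon as $H'$ is chosen commensurable with $H$; the real content is to exhibit such an $H'$ for which the commensurator coincides with the \emph{normalizer}. Concretely, I read \Cref{commensu:equal:normalizer} as the \emph{construction} step --- given any virtually abelian $H\le\mcg(S)$ of rank $k\ge 1$ it produces a distinguished subgroup $H'$ in the commensuration class of $H$ --- and \Cref{norrmalizer:commensurator:equal} as the assertion that for a subgroup of this distinguished shape one has $N_{\mcg(S)}[H']=N_{\mcg(S)}(H')$. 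Granting both, the theorem is the concatenation of the two displayed equalities. Below I sketch how I would prove each ingredient.

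For the construction, I would first replace $H$ by a finite-index subgroup $H_0$ that is free abelian of rank $k$ and \emph{adequately reduced}: $H_0$ fixes each curve of its canonical reduction system $\sigma=\sigma(H_0)=\sigma(H)$, induces no rotation on the corresponding annuli, and does not permute the components of the cut surface $S_\sigma$. By the Birman--Lubotzky--McCarthy analysis of abelian subgroups, $H_0$ then decomposes as the ``twisting along $\sigma$'' part, free abelian of rank $|\sigma|$, together with, on each component $R$ of $S_\sigma$ on which $H_0$ acts nontrivially, an infinite cyclic group generated by a pseudo-Anosov class $\phi_R$; the ranks add up to $k$. I would take $H'$ to be generated by the Dehn twists $T_c$ for $c\in\sigma$ and, on each such $R$, a fixed primitive root $\psi_R$ of the pseudo-Anosov piece, the roots being chosen \emph{equivariantly} under the stabilizer of $\sigma$: whenever an element of $\mcg(S)$ stabilizes $\sigma$, sends $R$ to $R'$, and keeps $H'$ within its commensuration class, it should carry $\psi_R$ to $\psi_{R'}$ up to a finite-order factor. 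A rank and finite-index count then shows $H'$ is commensurable with $H$.

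For the normalizer-equals-commensurator step, take $g\in N_{\mcg(S)}[H']$, so that $gH'g^{-1}$ is commensurable with $H'$. The canonical reduction system is a commensurability invariant, so $g\sigma=\sigma$; hence $g$ stabilizes $\sigma$, permutes its curves and the components of $S_\sigma$, and conjugates the cyclic pseudo-Anosov piece of $H'$ over $R$ to a cyclic pseudo-Anosov group over $g(R)$ commensurable with the one $H'$ already carries there. The decisive point is rigidity of roots: a pseudo-Anosov class on a component $R$ has a well-defined primitive root modulo the finite subgroup of classes preserving its pair of invariant foliations --- this follows from McCarthy's description of such stabilizers and the virtual cyclicity of pseudo-Anosov centralizers --- so the equivariant choice built into $H'$ forces $g\psi_R g^{-1}=\psi_{g(R)}$ on the nose, and similarly $gT_cg^{-1}=T_{g(c)}^{\pm1}$ for $c\in\sigma$. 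Therefore $g$ normalizes $H'$, which gives $N_{\mcg(S)}[H']\subseteq N_{\mcg(S)}(H')$; the reverse containment holds for any group.

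I expect the main obstacle to be exactly this pseudo-Anosov bookkeeping: arranging the primitive roots $\psi_R$ so that the choice is genuinely canonical and simultaneously equivariant over all components --- including the delicate cases where several components are permuted among one another or where a single component carries extra symmetries --- and checking that imposing equivariance does not destroy commensurability with the original $H$. The ``twisting'' directions are comparatively harmless, since an element stabilizing $\sigma$ automatically conjugates $T_c$ to $T_{g(c)}^{\pm1}$ and hence preserves the twist subgroup; the remaining glue is to control the commensurator of the product-type subgroup assembled from the twist part and the pseudo-Anosov parts, which is where the structure theory of reducible subgroups of $\mcg(S)$ does the work.
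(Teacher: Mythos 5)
Your reduction of the theorem to the two cited results is fine in spirit (and the first equality is indeed automatic once $H'$ is commensurable with $H$), but the construction you sketch for $H'$ has a genuine gap that breaks the whole argument. You define $H'$ to be generated by \emph{all} Dehn twists $T_c$, $c\in\sigma(H)$, together with a primitive pseudo-Anosov root $\psi_R$ on \emph{every} component $R$ on which $H$ acts nontrivially, and you justify commensurability with $H$ by asserting that the Birman--Lubotzky--McCarthy decomposition makes the twist part of $H$ have rank $|\sigma|$ and the ranks ``add up to $k$''. That is false in general: BLM (and \Cref{Cor:action:components}) only give that $H$ \emph{embeds} via the cutting homomorphism data, i.e.\ $k=\operatorname{rank}(H\cap\langle T_c\rangle_{c\in\sigma})+\operatorname{rank}(\rho_\sigma(H))$, and both summands can be strictly smaller than $|\sigma|$ and the number of pseudo-Anosov components, respectively; $\rho_\sigma(H)$ can sit diagonally in the product of the cyclic groups $\langle\phi_R\rangle$. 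For example, if $f_1,f_2,f_3$ are pseudo-Anosov classes supported on disjoint essential subsurfaces and $H=\langle f_1f_2,\,f_2f_3\rangle$, then $H$ has rank $2$, contains no twist along $\sigma(H)=[\partial R_1\cup\partial R_2\cup\partial R_3]$, while your $H'$ has rank $|\sigma|+3$. Since rank is a commensurability invariant of virtually abelian groups, this $H'$ is not commensurable with $H$, so $N_{\mcg(S)}[H]=N_{\mcg(S)}[H']$ fails and the theorem is not proved. (Your second step --- forcing $g\psi_Rg^{-1}=\psi_{g(R)}$ ``on the nose'' by an equivariant choice of primitive roots --- is exactly the bookkeeping you flag as the obstacle; it is not clear such a simultaneous equivariant choice exists, and even granting it one must still handle finite-order and twist corrections along $\sigma$.)

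The paper takes a different, softer route that avoids choosing roots altogether, and you also misread its architecture: \Cref{norrmalizer:commensurator:equal} versus \Cref{commensu:equal:normalizer} is a split into closed surfaces versus surfaces with boundary, not ``construction'' versus ``verification''. For closed $S$, one passes to a free abelian finite-index $H'\le H\cap\mcg(S)[3]$ and proves two things: (1) $N_{\mcg(S)}[H']$ is finitely generated, which follows from the central extension of \Cref{Normalizer:commensurator:free:abelian:subgroup}(c) realizing the commensurator, modulo the twist kernel, as a finite-index subgroup of $\mcg(\bigsqcup\hat S_i)\times A$; and (2) Condition C (\Cref{condition:c}), deduced from separability of abelian (solvable) subgroups of $\mcg(S)$, which for any finitely generated $K\subseteq N_{\mcg(S)}[H']$ produces a commensurable subgroup normalized by $\langle H',K\rangle$. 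Applying this with $K$ equal to the (finitely generated) commensurator gives $N_{\mcg(S)}[H']=N_{\mcg(S)}(H'')$ for a finite-index $H''\le H'$ (\Cref{lemma:conditionC:fg}). The boundary case is then handled through the capping central extension and \Cref{auxilar:proposition:boundary}, where $H'$ is only commensurable with (not inside) $H$. If you want to salvage your approach, you would have to replace your $H'$ by a subgroup of the same rank as $H$, at which point the rigidity argument no longer closes up and some input like separability seems unavoidable.
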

 
 \begin{remark}\label{remark1:intro} If $S$ is a closed surface possible with finitely many punctures, the subgroup $H'$ can be taken to be of finite index in $H$ in the statement of \cref{thm:normalizer:commensurator:general:case} above; see \Cref{norrmalizer:commensurator:equal}. Furthermore, for any subgroup $\Gamma$ of $\mcg(S)$, from \cref{normarlizer:equal:Commen:subgroup}, it follows that the commensurator of any infinite virtually abelian subgroup $H$ of $\Gamma$  can be realized as the normalizer of a finite index subgroup $H'$ of $H$. % in the same commensuration class.  
 Some examples of such subgroups $K$ that may be of interest are:
 \begin{itemize}
     \item  The mapping class groups $\mcg(N)$, where $N$ is a non-orientable closed surface, possibly with punctures, with negative Euler characteristic. This group can be realized as a subgroup of the mapping class group $\mcg(M)$ of the orientable double cover $M$ of $N$; see \Cref{non-orientable:surfaces} and \Cref{Prop:normalizers:equal:comm:nonorien}.%  induces a monomorphims $\iota \colon \mcg(N) \to \mcg(M)$. %(see \cite{MR3869010}). 

     \item Any right-angled Artin group (RAAG) $\Gamma$, or more generally, any $\Gamma$ that has a subgroup of finite index that embeds in a RAAG. From \cite[Corollary 5.2]{MR3151642} any such $\Gamma$ embeds in $\mcg(S)$  for some closed  orientable surface $S$ of  sufficiently large genus; see also \cite[Corollary 3]{MR2327038}.  

     \item The braid group $B_n(S)$ of a connected closed surface $S$ with negative Euler characteristic. There exists a monomorphism $\psi\colon B_n(S)\to \mcg(S,n)$, where $\mcg(S,n)$ denotes the mapping class group of the closed surface $S$ with $n$ punctures; see for example,  \cite[Section 2.4]{MR3382024}.
     
     %\item Let $n\geq3$, by \cite{MR2150887} Artin groups of finite type $A_n$, $B_n$, $\tilde{C}_{n-1}$, $\tilde{A}_{n-1}$ see \cref{a:fig} are a central extension of a finite index subgroup of the mapping class group of the sphere $S^2$ with $(n+2)$-punctures.  By \cref{auxilar:proposition:boundary} it follows that these Artin groups satisfy the analogous \cref{thm:normalizer:commensurator:general:case}
   %   \begin{figure}[h]
    %\centering
 %\includegraphics[width=6.7in]{} 
  %  \caption{\small Artin groups of finite type $A_n$, $B_n$, $\tilde{C}_{n-1}$, $\tilde{A}_{n-1}$  }
   % \label{a:fig}
%\end{figure}
%\end{remark}
 \end{itemize}

 \end{remark}

\begin{remark}  From \Cref{lemma:com:norm:rank1}, \Cref{norrmalizer:commensurator:equal} and  \cref{auxilar:proposition:boundary}, it follows that  any central extension $\Gamma$ of  a subgroup  of $\mcg(S)$ by a finitely generated free abelian group  satisfies that condition $(\star)$ holds for every finitely generated free abelian subgroup of $\Gamma$. %H'$ of $\Gamma$ there is a subgroup $H'$ of $\Gamma$ such that $L$ is commensurable with $H$ and $N_{G}[L]=N_G(L)$. 
This allows us to promote \Cref{norrmalizer:commensurator:equal} from closed surfaces to \Cref{commensu:equal:normalizer} for surfaces with non-empty boundary using the capping homomorphism (see \cref{NON-EMPTY}).  

As a consequence of the previous paragraph and \Cref{remark1:intro},  for fixed $n\geq3$, Artin groups of finite type $A_n$ and $B_n=C_n$ are a central extension of a finite index subgroup of the mapping class group $\mcg(S^2,n+2)$ of the $(n+2)$-punctured sphere $S^2$ by an infinite cyclic group  and the Artin groups of affine type $\tilde{C}_{n-1}$ and  $\tilde{A}_{n-1}$ can be realized as a finite index subgroup of $\mcg(S^2,n+2)$ (see for example \cite[Section2]{MR2150887} and references therein). Hence, free abelian subgroups of such groups satisfy $(\star)$.% have commensurators that can be realized by normalizers of subgroups in the same commensuration class. %it follows that these Artin groups satisfy the analogous  \cref{thm:normalizer:commensurator:general:case}

\end{remark}

Concerning the structure of abelian subgroups of $\mcg(S)$, it was shown by Birman--Lubotzky--McCarthy in \cite[Theorem A]{BirmanLubotzkyMcCarthy83} that they are finitely generated and the maximum possible rank of an abelian subgroup was computed for closed orientable surfaces $S$ with finitely many punctures and negative Euler characteristic (see also \cref{maximal:rank:abelian:subgroup}).  Analogue results for the non-orientable case were proved recently by Kuno in \cite{kuno2019}.

One of the fundamental ingredients in the proof of \cref{thm:normalizer:commensurator:general:case} is the reduction system theory developed by Ivanov in \cite[Chapter 7]{Ivanov:subgroups} for reducible subgroups of $\mcg(S)$. The proof of the rank $k=1$ case,  when the surface $S$ is orientable,  uses the canonical reduction systems of reducible elements of $\mcg(S)$. It is due to the work of Juan-Pineda--Trujillo-Negrete in \cite{JPT16}, and Nucinkis--Petrosyan in \cite{Nucinkis:Petrosyan}; see \Cref{section:normalizers} for further references. Although, we follow a very similar strategy and we adopt their notation, we had to overcome some difficulties that arise when we consider a  virtually abelian subgroup of rank  $k\geq 2$.%at least 2. %It is worth saying that one of the fundamental ingredients in the proof of the above theorem is the reduction system theory developed by Ivanov in \cite[Chapter 7]{Ivanov:subgroups} for abelian subgroups of $\mcg(S)$; see section \ref{MCG}.

The main technical step towards the proof \Cref{thm:normalizer:commensurator:general:case} is to describe the structure of both $N_{\mcg(S)}(H)$ and $N_{\mcg(S)}[H]$ as  a central extension of a free abelian subgroup by a group % $Q_1$ and $Q_2$, respectively,  
that has finite index in a group of the form $\mcg(\hat S)\times A$, where $\hat S$ is a non-connected compact surface and $A$ is a virtually abelian group. This is done in \Cref{Normalizer:commensurator:free:abelian:subgroup}, and it is a generalization of \cite[Proposition 4.12]{JPT16} and \cite[Proposition 5.6]{Nucinkis:Petrosyan}. It is worth mentioning that the proof of \cite[Proposition 4.12]{JPT16} uses the fact that the centralizer of an infinite cyclic subgroup has finite index in the normalizer of the same group, which is an easy observation. On the other hand, when $H$ is a free abelian subgroup of rank at least 2 of a group $G$, it is not necessarily true that the centralizer $C_G(H)$ has finite index in $N_G(H)$. This is one of the technical difficulties that we sorted out in the proof of \Cref{Normalizer:commensurator:free:abelian:subgroup}. In fact, as a consequence of \Cref{Normalizer:commensurator:free:abelian:subgroup} we obtained that, when $G=\mcg(S)$ for $S$ a closed orientable surface, the centralizer of certain free abelian subgroups have finite index in the corresponding normalizer (see \Cref{cor:centra:norma}).

As an application of \Cref{thm:normalizer:commensurator:general:case} and the short exact sequences given in \Cref{Normalizer:commensurator:free:abelian:subgroup}, we compute an upper bound for the geometric dimension of $\mcg(S)$ for the family of abelian subgroups of rank bounded by $n$, for each $n\geq 2$. To state this result in full detail we introduce first some definitions and notation. 

%Given a group $\Gamma$, we say a collection $\calF$ of subgroups of $\Gamma$ is a \emph{family} if it is non-empty, closed under conjugation and under taking subgroups. Fix a group $\Gamma$ and a family $\calF$ of subgroups of $G$. We say that a $\Gamma$-CW-complex $X$ is a \emph{model for the classifying space} $E_{\calF}\Gamma$ if every isotropy  group of $X$ belongs to the family $\calF$ and the fixed point set $X^H$ is contractible whenever  $H$ belongs to $\calF$. It can be shown that a model for the classifying space   $E_{\calF}\Gamma$ always exists and it is unique up to $\Gamma$-homotopy equivalence. The \emph{$\calF$-geometric dimension} of $\Gamma$   is by definition
%$$\gd_{\calF}(\Gamma)=\min\{n\in \mathbb{N}| \text{ there is a model for } E_{\calF}\Gamma \text{ of dimension } n \}.$$
%Let $n\geq0$ be an integer. A group is said to be \emph{virtually $\mathbb{Z}^n$} if it contains a subgroup of finite index isomorphic to $\mathbb{Z}^n$. Define the family \[\calF_n=\{H\leq\Gamma| H \text{ is virtually } \mathbb{Z}^r \text{ for some } 0\le r \le n \}.\] The  families $\{1\}$, $\calF_0=\fin$ and $\calF_1=\vcyc$  are relevant due to its connection with the Farrell-Jones and Baum-Connes isomorphism conjectures.  \textcolor{red}{The families $\calF_n$  have been recently studied by several people, see for instance \cite{SS20}...}

Fix a group $G$ and a {\it family} $\calF$ of subgroups of $G$, i.e. a  non-empty collection $\calF$ of subgroups of $G$ that is closed under conjugation and under taking subgroups.  We say that a $G$-CW-complex $X$ is a \emph{model for the classifying space} $E_{\calF} G$ if every isotropy  group of $X$ belongs to the family $\calF$ and the fixed point set $X^H$ is contractible whenever  $H$ belongs to $\calF$. It can be shown that a model for the classifying space   $E_{\calF} G$ always exists and it is unique up to $G$-homotopy equivalence. The \emph{$\calF$-geometric dimension} of $G$   is by definition
$$\gd_{\calF}(G)=\min\{k\in \mathbb{N}| \text{ there is a model for } E_{\calF}G \text{ of dimension } k \}.$$
Let $n\geq0$ be an integer. A group is said to be \emph{virtually $\mathbb{Z}^n$} if it contains a subgroup of finite index isomorphic to $\mathbb{Z}^n$. Define the family \[\calF_n=\{H\leq G| H \text{ is virtually } \mathbb{Z}^r \text{ for some } 0\le r \le n \}.\] The families $\calF_0$  and $\calF_1$ are the families of finite and virtually cyclic subgroups, respectively, and are relevant due to their connection with the Farrell-Jones and Baum-Connes isomorphism conjectures, see for instance \cite{LR05}. In \cite{BB19} Bartels--Bestvina  proved that the mapping class group of an orientable surface of finite type satisfies the Farrell-Jones conjecture. The families $\calF_n$  have been recently studied by several people, for instance there are computations for $\mathrm{CAT}(0)$-groups \cite{tomasz,HP20} and for free abelian groups \cite{Nucinkis:Moreno:Pasini,MR4472883}. Also, the second named author has an article in progress where he computes the virtually abelian dimension of braid groups,  RAAG's, and virtually finitely generated free abelian groups. In this paper we contribute to the existing literature by obtaining an upper bound for the $\calF_n$-geometric dimension of $\mcg(S)$ for each $n\geq 2$.

\begin{restatable}{theorem}{dimensionbound}
\label{Thm:dimension:bound:main}
Let $S$ be a connected compact orientable surface possibly with a finite number of punctures. Assume that $S$ has negative Euler characteristic.  Then for all $ n\in \mathbb{N}$ we have  $$\gd_{\calF_n}(\mcg(S))\leq \vcd (\mcg(S)) +n.$$
\end{restatable}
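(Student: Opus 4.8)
The plan is to induct on $n$, using the known virtually cyclic case as the base and a pushout (Mayer–Vietoris / Lück–Weiermann) construction to climb from $\mathcal{F}_{n-1}$ to $\mathcal{F}_n$. For $n=0$ the statement is $\gd_{\mathcal{F}_0}(\mcg(S))=\cdfin(\mcg(S))\le\vcd(\mcg(S))$, which holds since $\mcg(S)$ is virtually torsion-free with a finite-index subgroup of dimension $\vcd$ acting on a Teichmüller-type model. For $n=1$ this is the virtually cyclic dimension bound already available in the literature (Juan-Pineda–Trujillo-Negrete, Nucinkis–Petrosyan), which gives $\gdvc(\mcg(S))\le\vcd(\mcg(S))+1$. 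So assume $n\ge 2$ and that $\gd_{\mathcal{F}_{n-1}}(\mcg(S))\le\vcd(\mcg(S))+n-1$.

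For the inductive step I would invoke the Lück–Weiermann construction. Let $G=\mcg(S)$, and let $\mathcal{F}_{n-1}\subseteq\mathcal{F}_n$ be the two families; pick a set $\mathcal{M}$ of representatives, up to commensurability, of the maximal virtually-$\mathbb{Z}^n$ subgroups (equivalently, of the subgroups in $\mathcal{F}_n\setminus\mathcal{F}_{n-1}$ under the commensurability equivalence relation $\sim$), and for each $H\in\mathcal{M}$ let $N_G[H]$ be its commensurator. The Lück–Weiermann theorem produces a $G$-pushout whose upper-left corner is built from $E_{\mathcal{F}_{n-1}}G$, whose lower-left corner from $E_{\mathcal{F}_n}G$, and whose right-hand column is assembled from the spaces $G\times_{N_G[H]}E_{\mathcal{F}_{n-1}[N_G[H]]}(N_G[H])$ and $G\times_{N_G[H]}E_{\mathcal{F}_n[N_G[H]]}(N_G[H])$, where $\mathcal{F}_n[N_G[H]]$ restricted to $N_G[H]$ is the family generated by $\mathcal{F}_{n-1}$ together with $H$ itself. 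The standard dimension estimate coming out of this pushout is
\begin{equation*}
\gd_{\mathcal{F}_n}(G)\le\max\Big\{\gd_{\mathcal{F}_{n-1}}(G),\ \gd_{\mathcal{F}_{n-1}}(G)+1,\ \max_{H\in\mathcal{M}}\gd_{\mathcal{F}_n[N_G[H]]}(N_G[H])\Big\},
\end{equation*}
so by the inductive hypothesis the first two terms are $\le\vcd(G)+n$, and everything reduces to bounding $\gd_{\mathcal{F}_n[N_G[H]]}(N_G[H])$ by $\vcd(G)+n$ for each maximal $H$.

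To control that last term I would use Theorem \ref{thm:normalizer:commensurator:general:case}, which replaces $N_G[H]=N_G[H']$ by $N_G(H')$ with $H'$ commensurable to $H$, together with the structural short exact sequences of \Cref{Normalizer:commensurator:free:abelian:subgroup}: $N_G(H')$ is a central extension of a finitely generated free abelian group by a group that is finite-index in $\mcg(\hat S)\times A$ with $\hat S$ a (possibly disconnected) surface obtained by cutting along the reduction system of $H'$ and $A$ virtually abelian. Since $H'$ itself becomes, under this description, a virtually-$\mathbb{Z}^n$ "central-plus-$A$" subgroup, the family $\mathcal{F}_n[N_G(H')]$ is essentially the family of subgroups commensurable with a subgroup of $H'$; a model for its classifying space can be built from a model for $E_{\underline{\phantom{x}}}(\mcg(\hat S))$ (dimension $\le\vcd(\mcg(\hat S))$, and a Euler-characteristic/reduction-system count shows $\vcd(\mcg(\hat S))+\mathrm{rk}(\text{central part})\le\vcd(\mcg(S))$ because each curve in the reduction system drops complexity), crossed with a bounded-dimensional model for the virtually abelian factor accounting for the remaining directions — the point being that the "new" abelian rank contributes at most $n$ and is absorbed against the drop in $\vcd$ coming from the reduction. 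Assembling these via the join/product constructions for classifying spaces gives the bound $\vcd(\mcg(S))+n$ for this corner as well.

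The main obstacle is the last step: bookkeeping the dimension of $E_{\mathcal{F}_n[N_G(H')]}(N_G(H'))$ precisely. One must verify that the central $\mathbb{Z}^m$ extension is handled correctly (a central extension by $\mathbb{Z}^m$ raises $\gd_{\mathcal{F}_k}$ by at most $m$ when $\mathbb{Z}^m$ lies in every subgroup of the family, cf. the behaviour of $\mathcal{F}_n$ under central extensions used elsewhere in the paper), that the product $\mcg(\hat S)\times A$ splits the family appropriately so that dimensions add, and — the genuinely delicate inequality — that $\vcd(\mcg(\hat S))$ plus the total new abelian rank never exceeds $\vcd(\mcg(S))+n$. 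This is where Ivanov's reduction-system theory and the Birman–Lubotzky–McCarthy rank computations enter: cutting $S$ along the $k$ disjoint curves supporting (the reducible part of) $H'$ strictly decreases the virtual cohomological dimension by at least the number of independent Dehn twists contributed, so the arithmetic balances out. I expect the clean way to present this is to first prove the $N_G(H')$ estimate as a standalone lemma and then feed it into the Lück–Weiermann induction.
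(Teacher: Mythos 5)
Your skeleton matches the paper's: induction on $n$ with the known $n=0,1$ cases as base, the Lück--Weiermann pushout over commensurability classes in $\calF_n-\calF_{n-1}$, and the replacement of $N_G[H]$ by a normalizer $N_G(L)$ via \cref{thm:normalizer:commensurator:general:case}. However, the step you yourself flag as "the main obstacle" --- bounding $\gd_{\calF_n[H]}(N_G[H])=\gd_{\calF_n[L]}(N_G(L))$ --- is precisely where the paper's real work happens, and your sketch of it contains a genuine gap. First, the family $\calF_n[L]$ is \emph{not} "essentially the family of subgroups commensurable with a subgroup of $L$": by definition it also contains all of $\calF_{n-1}\cap N_G(L)$, i.e.\ every lower-rank virtually abelian subgroup of the normalizer, whether or not it sits inside $L$. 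The paper handles this by writing $\calF_n[L]=\calG\cup\calF$ with $\calG$ the pullback under $N_G(L)\to W_G(L)$ of the finite subgroups of the Weyl group (together with the commensurability class of $L$) and $\calF=\calF_{n-1}\cap N_G(L)$, and then gluing via the union-of-families pushout (\cref{lemma:union:families}). This forces three separate estimates: $\gd_\calF(N_G(L))\le\gd_{\calF_{n-1}}(G)$ (induction), $\gd_\calG(N_G(L))\le\vcd(W_G(L))$ (\cref{lemma:auxiliar:dimensions}, using the structure of $Q$ from \cref{Normalizer:commensurator:free:abelian:subgroup} and known $\gd_{\calF_0}=\vcd$ results for mapping class groups and virtually abelian groups), and the mixed term $\gd_{\calF\cap\calG}(N_G(L))\le\vcd(W_G(L))+2n-1$, which needs \cite[Proposition 5.1]{LW12} together with the bound $\gd_{\calF_{n-1}\cap K}(K)\le 2n-1$ for virtually $\Z^n$ groups $K$ from \cite{tomasz}. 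Your proposal has no mechanism producing this mixed-family bound at all.

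Second, your proposed way to close the arithmetic --- an "Euler-characteristic/reduction-system count" showing $\vcd(\mcg(\hat S))+\mathrm{rk}(\text{central part})\le\vcd(\mcg(S))$ --- is neither proved nor what is needed. After the three estimates above one faces $\vcd(W_G(L))+2n$, with an excess of $2n$ rather than $n$; the paper absorbs it by proving that $N_G(L)$ and $W_G(L)$ are virtual duality groups with $\vcd(N_G(L))=\vcd(W_G(L))+n$ (\cref{vcd:weyl:and:normalizer} and \cref{vcd:weyl:and:normalizer:boundary}, via Bieri--Eckmann applied to the central extensions of \cref{Normalizer:commensurator:free:abelian:subgroup} and \cref{normalizer:boundary}), and then using monotonicity $\vcd(N_G(L))\le\vcd(G)$. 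Without this duality step (or a genuine proof of your complexity-count inequality, which would also have to treat the virtually abelian factor $A$ and the punctures created by cutting), the bound $\vcd(G)+n$ does not follow. Two smaller points: your justification of the $n=0$ base case ("virtually torsion-free with a finite-index subgroup acting on a Teichmüller-type model") does not by itself bound $\gd_{\calF_0}$ --- one needs Harer's spine or the Aramayona--Martínez-Pérez theorem, which you should simply cite --- and the surfaces with non-empty boundary require the separate capping-homomorphism treatment (\cref{commensu:equal:normalizer}, \cref{lemma:auxiliar:dimensions:boundary}), which your sketch does not address.
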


%\begin{theorem}\label{Thm:dimension:bound:main}%[\Cref{virtually:abelian:dim:mod}]
%Let $S$ be a connected compact orientable surface and possibly with a finite number of punctures. Assume that $S$ has negative Euler characteristic.  Then for all $ n\in \mathbb{N}$ we have  $\gd_{\calF_n}(\mcg(S))\leq \vcd (\mcg(S)) +n$.
%\end{theorem}

%The cases $n=0,1$ are our induction basis in those cases the bounds are  equalities as proved by {\color{blue} Harer} and Aramayona--Martínez-Pérez \cite{AMP14}, for the rank 0 case, and \cite{Nucinkis:Petrosyan}, for the rank 1 case only when $S$ has no boundary. We conjecture our bound is sharp for all $n \geq 2$, at least when $S$ has empty boundary.

The cases $n=0,1$ are the induction basis for the proof of \Cref{Thm:dimension:bound:main}.  The equality is known to hold in the case $n=0$ by work of Harer \cite{Ha86}  and Aramayona--Martínez-Pérez  \cite{AMP14}. The upper bound for the case $n=1$ was  obtained by  Nucinkis--Petrosyan \cite{Nucinkis:Petrosyan}, where the equality was proved for closed orientable surfaces. The virtual cohomological dimension of the mapping class group of an orientable surface was computed by Harer in \cite{Ha86}. %{\color{blue} REFORMULAR: We conjecture our bound is sharp for all $n \geq 2$, at least when $S$ has empty boundary.}

For $S$ a closed orientable surface of genus $g$ and with $b\geq 0$ punctures, by \cite[Theorem A]{BirmanLubotzkyMcCarthy83}  every free abelian subgroup of $\mcg(S)$ has rank at most $r=3g-3+b$. Then for all $n\ge r$ we have that $\calF_n=\calF_r$ and $\gd_{\calF_n}(\mcg(S))=\gd_{\calF_r}(\mcg(S))$. It follows from \Cref{Thm:dimension:bound:main} that $$\gd_{\calF_n}(\mcg(S))\leq \vcd(\mcg(S))+r\text{\ \  for all \ \ }n\geq r.$$

On the other hand, consider a connected closed non-orientable surface $N$, possibly with punctures,  with negative Euler characteristic. The group $\mcg(N)$ can be realized as a subgroup of the mapping class group of $M$, the orientable double cover of $N$; see \cref{non-orientable:surfaces}. Since geometric dimensions are monotone it follows from \cref{Thm:dimension:bound:main} that $$\gd_{\calF_n\cap\mcg(N)}(\mcg(N))\leq \vcd(\mcg(M))+n\text{\  \  for all\  \  }n\in \mathbb{N}.$$

\addtocontents{toc}{\protect\setcounter{tocdepth}{0}}
\subsection*{Outline of the paper.} In \Cref{MCG} preliminaries and notation about mapping class groups are recalled. In particular, the definition of the cutting homomorphism associated to a collection of disjoint simple curves and the main properties of canonical reduction systems for reducible subgroups of $\mcg(S)$ following \cite{Ivanov:subgroups}. \Cref{sec:pushouts} is devoted to introduce a particular case of the Lück--Weiermann construction to promote a classifying space of a group $G$ with respect to the family $\mathcal{F}_{n}$ to a classifying space for the family $\mathcal{F}_{n+1}$; this constructions allow us to perform an induction argument in the proof of \Cref{Thm:dimension:bound:main}. \Cref{section:normalizers} is the longest and most technical section, and it deals with the structure of commensurators, normalizers and centralizers of abelian subgroups of $\mcg(S)$ which are used to obtain \Cref{thm:normalizer:commensurator:general:case}. Finally, in \Cref{section:virtually:abelian:dimension} we prove \Cref{Thm:dimension:bound:main} using the results from \Cref{section:normalizers}.

\subsection*{Acknowledgements.}
The second author was supported by a doctoral scholarship of the Mexican Council of Science and Technology (CONACyT). We are grateful for the financial support of DGAPA-UNAM grant PAPIIT IA106923
and CONACyT grant CF 2019-217392.

\addtocontents{toc}{\protect\setcounter{tocdepth}{2}}

\section{Preliminaries on mapping class groups}\label{MCG}

In this section we recall some background on mapping class groups that will be needed to obtain our results. We use the notation from \cite{JPT16} and \cite{Nucinkis:Petrosyan}; for further details we refer the reader to \cite{Ivanov:subgroups} and \cite{FM12}.

Let $S$ be a connected compact oriented surface with finitely many punctures. The {\it mapping class group of $S$} is the group $\mcg(S)$ of isotopy classes of orientation preserving diffeomorphisms of $S$ that restrict to the identity on the boundary $\partial S$. If $\Diff_0(S,\partial S)$ denotes the subgroup of $\Diff^+(S,\partial S)$ of orientation preserving diffeomorphisms that are isotopic to the identity then  
$$\mcg(S)=\Diff^+(S,\partial S)/\Diff_0(S,\partial S).$$

%{\color{blue} Notación de mapping classes vs diffeos:  abuso de notación}

For $m\geq 2$, the {\it level $m$ congruence subgroup $\mcg(S)[m]$ of} $\mcg(S)$ is defined as the kernel of the natural homomorphism $\mcg(S)\rightarrow \Aut\left(H_1(S,\mathbb{Z}/m\mathbb{Z})\right)$ given by the action of diffeomorphisms in the homology of the surface. It is a finite index subgroup of $\mcg(S)$ and for $m\geq 3$ it is torsion free (see Theorem 3 of \cite{Ivanov:subgroups}). %and it consists of {\it pure elements} (see Theorem 3 of \cite{Ivanov:subgroups} and definition within).  %{\color{red}  QUizá no es necesario mencionar elementos puros en absoluto....revisar}.

A simple closed curve $\alpha$ in $S$ is essential if it is not homotopic to a point, a puncture  or  a boundary component of $S$. We denote the isotopy class of such $\alpha$ by $[\alpha]$ and by $V(S)$ the set of isotopy classes of essential curves in $S$.  The {\it complex of curves}  $C(S)$ is the simplicial complex with set of vertices $V(S)$  and with a $k$-simplex given by a set of $k+1$ vertices in $V(S)$ with mutually disjoint essential curves representatives.
The group $\mcg(S)$ acts on $V(S)$ by $g[\alpha]=[\varphi(\alpha)]$, where $\varphi$ a diffeomorphism of $S$  that represents  $g\in\mcg(S)$. This action takes simplexes into simplexes, hence $\mcg(S)$ also acts on $C(S)$. 
%$g\sigma= g\{[\alpha_1],\ldots,[\alpha_{k+1}]\}=\{[g(\alpha_1)],\ldots,[g(\alpha_{k+1})]\}$ for $g\in\mcg(S)$ and $\sigma={[\alpha_1],\ldots,[\alpha_{k+1}]\}$ a $k$-simplex of $C(S)$. 

%\begin{definition}(Complex of curves)
%Let $S$ be  a compact, orientable surface. 
%The complex of curves $C(S)$ is the abstract simplex that has as vertices $V(S)$ the set of essential curves, and has a $k$-simplex for each  $(k+1)$-tuple of vertices, where each pair of corresponding isotopy classeshave disjoint representants.
%\end{definition}

\subsection{Reduction systems and the cutting homomorphism}
For what follows in Section \ref{MCG}  we assume that the surface $S$ has empty boundary and negative Euler characteristic. %{\color{blue} Revisar cómo afecta nuestros argumentos.}%  Enfatizar que los resultados de Ivanov son para mapping class groups de superficies con puntos marcados sin frontera (él usa superficies con frontera, pero la manera en que define el mapping class group es conequivalente a que tenga  puncturas}.

\begin{definition} A subgroup $H$ of $\mcg(S)$ is called {\it reducible} if there is a nonempty simplex $\sigma$ of $C(S)$ such that $h\sigma=\sigma$ for every $h\in H$. Such simplex $\sigma$  is called a {\it reduction system for} $H$.  If no nonempty reduction system exists,  $H$ is said to be {\it irreducible}.  
\end{definition}

The Nielsen-Thurston classification theorem classifies the elements of $\mcg(S)$ in reducible, periodic or pseudo-Anosov (see for example \cite[Theorem 13.2]{FM12}). An element $f\in \mcg(S)$ is reducible if the group $\langle f\rangle$ is reducible; otherwise $f$ is irreducible. Among the irreducible elements, those of finite order are periodic and those of infinite order are pseudo-Anosov.
%{\color{blue} no incluir la definición de sistema canónico, solo referenciazo y propiedades}

Let $\sigma=\{[\alpha_1],\ldots,[\alpha_n]\}$ be a reduction system for a group $H$ with $\{\alpha_1,\ldots, \alpha_n\}$ a set of mutually disjoint essential curves representatives.  Consider the surface that results of cutting $S$ along this reduction system  $\hat{S_{\sigma}}=S\setminus\cup_{k=1}^n \alpha_k=\sqcup_{j=1}^l\hat{S_j}$,  where each $\hat{S_j}$ is a connected subsurface of $\hat{S_{\sigma}}$. Let us denote by $\Omega_j$ the set of punctures in $\hat{S_j}$ coming from the simple closed curves $\alpha_k$ that were `cut' from $S$. The inclusions $\hat{S_j}\hookrightarrow \hat{S}_{\sigma}$ induce a monomorphism $\prod_{j=1}^l\mcg(\hat{S_j},\Omega_j) \hookrightarrow \mcg(\hat{S}_{\sigma})$, where  $\mcg(\hat{S_j},\Omega_j)$ is the pure mapping class group of $\hat{S_j}$ that fixes pointwise the set of punctures $\Omega_j$.  

%{\color{blue} enfatizar que $H$ es subgrupo del grupo de congruencia y por eso se fijan las puncturasEstán usando la notación $\mcg(\bigsqcup_{i=j}^l\hat{S_j})^0$ más abajo, ¿la necesitamos? ¿la incluimos en esta parte?}

Define $\mcg(S)_{\sigma}=\{g\in\mcg(S)|g(\sigma)=\sigma\}$ and $\mcg(S)_{\sigma}^0$ its finite index subgroup consisting of elements that fix each curve $\alpha_k$ with orientation. There is a well-defined homomorphism, often called the {\it cutting homomorphism}
\begin{equation}\label{eq:cutting:homom}
    \rho_{\sigma}\colon \mcg(S)_{\sigma}\to \mcg(\hat{S}_{\sigma})
\end{equation}
with $\ker \rho_{\sigma}=\langle T_{\alpha_1},\ldots,T_{\alpha_n}\rangle$, the free abelian group of rank $n$ generated by the Dehn twists along the curves $\alpha_1,\ldots,\alpha_n$. When restricted to the subgroup $\mcg(S)_{\sigma}^0$ the image of $\rho_{\sigma}$ surjects onto $\prod_{j=1}^l\mcg(\hat{S_j},\Omega_j)$. Let $\varphi_i:\prod_{j=1}^l\mcg(\hat{S_j},\Omega_j) \rightarrow \mcg(\hat{S_i},\Omega_i)$ denote the projection onto the $i$th-factor.  

\begin{remark}
If  $m\geq 3$ and $H$ is a subgroup of $\mcg(S)[m]$, then  $H\subset \mcg(S)_{\sigma}^0$ by \cite[Theorem 1.2]{Ivanov:subgroups} and  the group $\varphi_i\circ\rho_{\sigma}(H)$ is a subgroup of $\mcg(\hat{S_i},\Omega_i)$ (see also \cite[Section 7.5]{Ivanov:subgroups}).
\end{remark}

\begin{remark}\label{subsection:pre-images}[Pre-images of the cutting homomorphism]
Let $f\in\prod_{j=1}^l\mcg(\hat{S_j},\Omega_j) \subset\mcg(\hat{S}_{\sigma})$. Take a representative diffeomorphism $F\colon\hat{S_{\sigma}}\to\hat{S_{\sigma}}$  of $f$ such that, for every $1\leq j\leq l$, it restricts to a diffeomorphism $F|_{\hat{S}_j}\colon \hat{S}_j\to \hat{S}_j$ that is the identity in (disjoint) tubular neighborhoods $U_x$ of  each  puncture $x$ in $\Omega_j$. Let $S_j=\hat{S}_j-\sqcup_{x\in \Omega_j} U_x$ and consider it as a subsurface of $S$. Define the diffeomorphism $\tilde f_j:S\to S$ by extending $F|_{S_j}$  with the identity in $S\setminus S_j$. Then $\tilde f_j$ fixes each curve $\alpha_k$ in the simplex $\sigma$, since the support of $\tilde f_j$ is contained in $S_j$; see figure \ref{fig:Preimage}. Let $\tilde{f}:S\rightarrow S$ be the diffeomorphism given  by the composition $\tilde{f}_1\tilde{f}_2\cdots\cdots\tilde{f}_l$ of diffeomorphisms of $S$ with disjoint support. Therefore $\tilde f$ represents an element of $\mcg(S)_\sigma^0$  with image $f$ under the  cutting homomorphism $\rho_\sigma$.
\begin{figure}[h]
    \centering
 \includegraphics[width=6.4in]{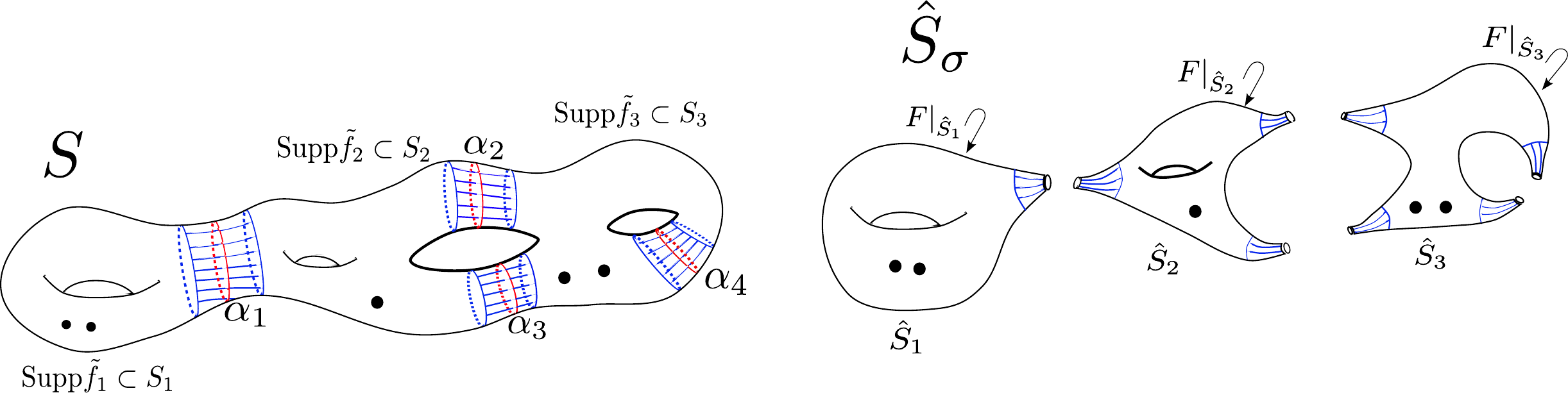} 
    \caption{\small Given the simplex $\sigma=\{\alpha_1,\alpha_2,\ldots \alpha_n\}$ of $C(S)$,  any mapping class $f$ in $\prod_{j=1}^l\mcg(\hat{S_j},\Omega_j)$ has a pre-image of the cutting $\rho_{\sigma}$ in $\mcg(S)_{\sigma}^0$, represented by a diffeomorphims $\tilde{f}:S\to S$ given by the composition $\tilde{f}_1\tilde{f}_2\ldots\tilde{f}_l$ of diffeomorphisms of $S$ with disjoint support.}
    \label{fig:Preimage}
\end{figure}

\end{remark}

\subsection{Canonical reduction systems and free abelian subgroups} Let $m\geq 3$ and $H$ be a subgroup of $\mcg(S)[m]$.  We denote by $\sigma(H)$ the {\it canonical reduction system for $H$} as defined by Ivanov in \cite{Ivanov:subgroups}.  We recall here the properties that will be needed for our arguments in the next sections, 
the precise definitions are stated in \cite[Section 7.2 and 7.4]{Ivanov:subgroups}.

%If  $H$ is a non trivial reducible subgroup of $\mcg(S)[m]$, then  $\sigma(H)\neq \phi$ by \cite[Corollary 7.12]{Ivanov:subgroups}

%Lemma \ref{PropertiesRedSyst}(iii).% $H\subset \mcg(S)_{\sigma}$ and $H\cap\mcg(S)[m]\subset \mcg(S)_{\sigma}^0$.

%$$\rho_{\sigma}\colon \mcg(S)_{\sigma}^0\to\Pi_{i=1}^l\mcg(\hat{S_i},\Omega_i) \subseteq \mcg(\hat{S}_{\sigma})$

%The following properties are proved in \cite[Sections 7.2, 7.3]{Ivanov:subgroups}:
%\begin{definition}[Canonical reduction system of a subgroup of $\mcg(S)$]
%Let $H$ be a subgroup of $\mcg(S)[m]$ for some $m\geq 3$. An isotopy class $[\alpha] \in V(S)$ is called an {\it essential reduction class} for $H$ if the following two conditions are satisfied: 
%\begin{enumerate}
 %   \item[(i)]  $h([\alpha])=[\alpha]$ for all $h\in H$;
  %  \item[(ii)] If $[\beta]\in V(S)$ and the geometric intersection number $i([\alpha],[\beta])\neq 0$, then $g([\beta])\neq [\beta]$ for some $g\in H$.
%\end{enumerate} 
%The set $\sigma(H)$  of essential reduction classes for $H$ is a reduction system for $H$  called the {\it canonical reduction system for $H$}. If  $H$ is an infinite reducible subgroup of $\mcg(S)$, then  $\sigma(H)\neq \phi$  (\cite[Cor. 7.17]{Ivanov:subgroups}).

%If $H'$ is a finite index normal subgroup of $H$, then $\sigma(H')=\sigma(H)$ (\cite[Lemma 7.3]{Ivanov:subgroups}). 
%\end{definition}%This is well defined by \cite[Section 7.4]{Ivanov:subgroups}.} % definition not depend of the subgroup $L$ see \cite[Section 7.4]

\begin{lemma}\label{PropertiesRedSyst} \cite[Chapter 7]{Ivanov:subgroups}%[Sections 7.2, 7.3, Corollaries 3.7 & 7.12]{Ivanov:subgroups} 
 Let $m\geq 3$ and consider $H$  a subgroup of $\mcg(S)[m]$. The set $\sigma(H)$ is a reduction system (possibly empty) for $H$ and satisfies the following properties:
\begin{itemize}
\item[(i)] If $H'$ is a finite index normal subgroup of $H$, then $\sigma(H')=\sigma(H)$.
\item[(ii)] For $g\in\mcg(S)$, we have that $g\sigma(H)=\sigma(gHg^{-1})$.
%\item[(iii)] Every $h\in H$ fixes the canonical reduction system $\sigma(H)$ vertexwise. %Furthermore $h$ preserves the orientation of each curve $\alpha_k$ representing a vertex in $\sigma(H)$.%in {\color{blue} fija las alfas con orientación}
\item[(iii)]  If  $H$ is nontrivial and reducible, then  $\sigma(H)\neq \phi$.
%{\color{blue} Citar solo si es necesario para la prueba de subgrupos abelianos}
\end{itemize}
\end{lemma}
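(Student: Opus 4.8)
The plan is to derive all three items from Ivanov's definition of the canonical reduction system $\sigma(G)$ of a subgroup $G\le\mcg(S)[m]$, $m\ge 3$, as the set of \emph{essential reduction classes} of $G$ (see \cite[Chapter 7]{Ivanov:subgroups}): isotopy classes $[\alpha]\in V(S)$ that are fixed by every element of $G$ and are, in addition, ``detected'' by the $G$-action in the precise sense of \cite{Ivanov:subgroups} (roughly, $[\alpha]$ is adjacent to a subsurface on which some element of $G$ acts as a pseudo-Anosov, or a nonzero power of the Dehn twist $T_\alpha$ occurs in the restriction of some element of $G$ near $\alpha$). I would first record that, because $\mcg(S)[m]$ is normal in $\mcg(S)$, both $gHg^{-1}$ for $g\in\mcg(S)$ and every finite-index subgroup $H'$ of $H$ again lie in $\mcg(S)[m]$, so that $\sigma(gHg^{-1})$ and $\sigma(H')$ are defined.

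For \textbf{(ii)} I would simply check that the $\mcg(S)$-action on $V(S)$ preserves every ingredient in the definition of an essential reduction class: it preserves geometric intersection numbers, it sends curves fixed by $H$ to curves fixed by $gHg^{-1}$ (since $ghg^{-1}$ fixes $g[\alpha]$ exactly when $h$ fixes $[\alpha]$), and it is compatible with the cutting homomorphisms and with the Nielsen--Thurston types of the resulting pieces. Hence $[\alpha]$ is an essential reduction class for $H$ if and only if $g[\alpha]$ is one for $gHg^{-1}$, which is precisely the asserted identity $g\sigma(H)=\sigma(gHg^{-1})$.

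For \textbf{(iii)} I would first observe that a nontrivial reducible subgroup $H\le\mcg(S)[m]$ contains no pseudo-Anosov element, since such an element fixes no essential curve whereas $H$ fixes every curve of a common nonempty reduction system; as $\mcg(S)[m]$ is torsion free for $m\ge 3$, every nontrivial element of $H$ is therefore an infinite-order reducible mapping class. I would then invoke Ivanov's existence theorem for canonical reduction systems of reducible subgroups \cite[Chapter 7]{Ivanov:subgroups}, which exhibits an essential reduction class for $H$, i.e. $\sigma(H)\neq\emptyset$. I expect this existence statement to be the genuinely substantive point: establishing that the essential part is nonempty is the core of Ivanov's analysis rather than a formal manipulation, so I would use it as a black box.

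For \textbf{(i)} I would prove the two inclusions separately. The inclusion $\sigma(H)\subseteq\sigma(H')$ comes from restricting from $H$ to $H'$: since $H'\le H$, the set $\sigma(H)$ is a reduction system for $H'$, and each of its classes remains essential for $H'$ because for every $h\in H$ a suitable power $h^N$ lies in $H'$ with $\sigma(h^N)=\sigma(h)$, so the pseudo-Anosov pieces and Dehn twists witnessing essentiality for $H$ are already witnessed inside $H'$. For the reverse inclusion $\sigma(H')\subseteq\sigma(H)$ I would use normality together with (ii): $\sigma(H')$ is a reduction system for $H'$, and for every $h\in H$ we have $h\sigma(H')=\sigma(hH'h^{-1})=\sigma(H')$ because $H'\trianglelefteq H$, so $\sigma(H')$ is $H$-invariant, hence a reduction system for $H$; a symmetric essentiality argument, now going up from $H'$ to $H$, shows its classes are essential for $H$. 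Normality is used exactly at this step, to upgrade the $H$-invariance of $\sigma(H')$ via (ii); the remaining work is the essentiality bookkeeping, which should be routine within Ivanov's framework. The main obstacle overall is thus internal to (iii)—the nonemptiness of the essential part—which is precisely Ivanov's theorem and which I would not reprove.
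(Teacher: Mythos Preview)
The paper does not give its own proof of this lemma: it is stated as a compilation of facts from \cite[Chapter~7]{Ivanov:subgroups}, with no argument beyond the citation. Your proposal is therefore not in conflict with the paper's proof---there is none to compare against---and your sketch is a reasonable unpacking of how (i)--(iii) sit inside Ivanov's framework, correctly flagging (iii) as the one genuinely nontrivial input that you take from Ivanov as a black box.

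One small point worth tightening in your argument for (i), inclusion $\sigma(H)\subseteq\sigma(H')$: when you pass from an $h\in H$ witnessing that $[\beta]$ is not a reduction class for $H$ to a power $h^N\in H'$, you implicitly use that if $h[\beta]\ne[\beta]$ then $h^N[\beta]\ne[\beta]$. This is not automatic for arbitrary group actions, but it does hold here because elements of $\mcg(S)[m]$, $m\ge 3$, are \emph{pure}: by \cite[Theorem~1.2]{Ivanov:subgroups}, if a power $h^N$ fixes a simplex of $C(S)$ then $h$ already fixes each of its vertices. You should make that appeal explicit. Similarly, in the reverse inclusion you deduce from $h\sigma(H')=\sigma(H')$ that $H$ fixes $\sigma(H')$ \emph{as a set}; upgrading this to ``$H$ fixes each curve of $\sigma(H')$'' again uses purity. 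With those two invocations spelled out, your outline is complete and matches the intended content of the citation.
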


%{\color{blue} For  $H$  a general subgroup of $\mcg(S)$, % and $L$ a normal subgroup of $K$ that has finite index in $K$ and consist of pure elements.  the canonical reduction system for $H$ can be defined by $\sigma(H)=\sigma\left(H\cap\mcg(S)[m]\right)$, for some $m\geq 3$. This is well-defined because of Lemma \ref{PropertiesRedSyst}(i).}  
\begin{remark}
%For elements of $\mcg(S)$ the definition of canonical reduction system of \cite[Section 7.2]{Ivanov:subgroups} is equivalent to that of \cite{BirmanLubotzkyMcCarthy83}: 
For a reducible element $f\in\mcg(S)[m]$, the simplex $\sigma(f)=\sigma(\langle f\rangle)$ is called a canonical reduction system for $f$ and it precisely the set of essential reduction classes in the sense of \cite{BirmanLubotzkyMcCarthy83}  (see also \cite[Section 13.2]{FM12}).

% Since $H\subset\mcg(S)[m]$ for some $m\geq 3$ it consists of {\it pure elements}  in the sense of \cite[Theorem 3]{Ivanov:subgroups)}. By \cite[Cor. 3.7]{Ivanov:subgroups} it follows that every $h\in H$ fixes the canonical reduction system $\sigma(H)$ vertex wise.

\end{remark}
%For $g\in\mcg(S)$, we have that $g\sigma(H)=\sigma(gHg^{-1})$. 

%\textcolor{red}{Hay que mencionar que después de cortar $S$ a través del sistema canónico de reducción, hay componentes donde $H$ actúa trivialmente y otras donde actúa reduciblemente. Las acciones reducibles a su vez, pueden ser finitas o contener un elemento pseudoAnosov. Finalmente, pasando a un subgroup de índice finito de $H$, podemos suponer que la acción en las componentes es trivial o cíclica infinita generada por un pseudoAnosov.}

%Notación..... restricción a un producto, como composición con la proyección...

 %\begin{theorem}[Lemma 8.7 \cite{Ivanov:subgroups}] Let $H$ be an abelian subgroup of $\mcg(S)[m]$,  $m\geq 3$, and let $\sigma=\sigma(H)$ be the canonical reduction system of $H$. Then for every component $S_i$ of the surface $\hat{S}_{\sigma}$, the group $\rho_{\sigma}(H)|_{\mcg(S_i)}$ is either trivial or an infinite cyclic group generated by a pseudo-Anosov mapping class.
 %\end{theorem}

%For abelian subgroups of $\mcg(S)[m]$, with $m\geq 3$, Ivanov  proved the following result \cite[Lemma 8.7, Corollary 8.5]{Ivanov:subgroups}.

%\begin{theorem}\label{IvanovCutting} Fix $m\geq 3$. Let $H$ be a subgroup of $\mcg(S)[m]$ and let $\sigma=\sigma(H)$ be the canonical reduction system of $H$. Then the group $H$ is abelian if and only if for every component $\hat{S}_i$ of the surface $\hat{S}_{\sigma}$, the group $\varphi_i\circ\rho_{\sigma}(H)$ is either trivial or an infinite cyclic group generated by a pseudo-Anosov mapping class.
%\end{theorem}

Let $m\geq 3$ and let $H$ be a free abelian subgroup of $\mcg(S)[m]$ of rank at least $2$. By \cite[Corollary 8.6]{Ivanov:subgroups} an irreducible abelian subgroup of $\mcg(S)$ is virtually cyclic,  therefore $H$ must be reducible and $\sigma(H)\neq\phi$ by Lemma \ref{PropertiesRedSyst}(iii). 

The following result follows from \cite[Lemma 8.7, Corollary 8.5]{Ivanov:subgroups}. It gives us a tool analogous to the {\it canonical form} for a reducible  mapping class (see for example \cite[Corollary 13.3]{FM12}) that will be an important ingredient for our discussion below.

%{\color{red} Comentar analogía con sistema de reducción canónico para elementos reducibles y elementos reducibles

%Quizá en el enunciado asumir que H es cíclico generado por un reducilbe o libre abeliano de rango mayor o igual a 2. Listar el resultado como teorema}
%Theorem \ref{IvanovCutting}.%the following result holds and it will be an important ingredient for our discussion below.

\begin{theorem}\label{Cor:action:components} Let $m\ge 3$ and let $H$ be  a free abelian  subgroup of $\mcg(S)[m]$ of rank  at least $2$. Then $H$ is reducible and has a nonempty canonical reduction system $\sigma=\sigma(H)$. Moreover, we can write $\hat{S}_{\sigma}=\bigsqcup_{i=1}^{a}\hat{S_i} \bigsqcup_{j=a+1}^{l}\hat{S_j}$ so that
\begin{itemize}
\item[a)] the group $\varphi_i\circ\rho_{\sigma}(H)$ is trivial  for all $1\le i\le a$, and
\item[b)] the group $\varphi_j\circ\rho_{\sigma}(H)$ is  infinite cyclic generated by a pseudo-Anosov mapping class for all $a+1\le j\le l$.
\end{itemize}
\end{theorem}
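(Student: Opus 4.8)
\textbf{Proof proposal for \Cref{Cor:action:components}.}

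The plan is to deduce the statement from the two cited results of Ivanov, \cite[Corollary 8.5, Lemma 8.7]{Ivanov:subgroups}, essentially by bookkeeping. First I would recall why $H$ is reducible: by \cite[Corollary 8.6]{Ivanov:subgroups} an irreducible abelian subgroup of $\mcg(S)$ is virtually cyclic, so a free abelian subgroup of rank $\ge 2$ cannot be irreducible; being nontrivial and reducible, \Cref{PropertiesRedSyst}(iii) gives $\sigma=\sigma(H)\neq\emptyset$. This produces the cutting homomorphism $\rho_\sigma$ and, since $H\subseteq\mcg(S)[m]$ with $m\ge 3$, we have $H\subseteq\mcg(S)_\sigma^0$ by \cite[Theorem 1.2]{Ivanov:subgroups}, so each $\varphi_i\circ\rho_\sigma(H)$ is an honest subgroup of $\mcg(\hat S_i,\Omega_i)$.

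Next I would analyze each connected piece $\hat S_i$ of $\hat S_\sigma$ separately. Fix $i$ and set $H_i=\varphi_i\circ\rho_\sigma(H)$. As a quotient of $H$, the group $H_i$ is finitely generated abelian, hence virtually $\mathbb{Z}^{r_i}$ for some $r_i\ge 0$; being a subgroup of $\mcg(S)[m]$ (up to the natural identifications, it lies in a torsion-free congruence subgroup of $\mcg(\hat S_i,\Omega_i)$), it is in fact free abelian. The key input is that, by the definition of the canonical reduction system and \cite[Corollary 8.5]{Ivanov:subgroups}, $H_i$ acts on $\hat S_i$ \emph{without} a nonempty canonical reduction system of its own — equivalently $\sigma(H_i)=\emptyset$ — so $H_i$ is either trivial or irreducible on $\hat S_i$. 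If $H_i$ is irreducible it is virtually cyclic by \cite[Corollary 8.6]{Ivanov:subgroups}, hence infinite cyclic (torsion-free), and since it contains no nontrivial periodic element its generator must be pseudo-Anosov by the Nielsen--Thurston classification. Relabeling the components so that the trivial ones come first, $\hat S_1,\dots,\hat S_a$, and the ones on which $H$ acts by an infinite cyclic pseudo-Anosov group come last, $\hat S_{a+1},\dots,\hat S_l$, gives exactly the asserted decomposition. Here I would invoke \cite[Lemma 8.7]{Ivanov:subgroups} to guarantee that these are the only two possibilities, i.e.\ that no component can carry a rank-$\ge 2$ free abelian (necessarily reducible) image — this is precisely the content of the canonical form for abelian subgroups.

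The main obstacle is the second bullet: one must rule out that some $\varphi_j\circ\rho_\sigma(H)$ is again reducible of positive rank, which would force a further reduction. The point is that $\sigma(H)$ is \emph{canonical} — it is characterized intrinsically, and the curves it contains are exactly the "outermost" essential reduction curves — so after cutting, no component retains an essential reduction class for the induced action; formally this is \cite[Corollary 8.5]{Ivanov:subgroups}, which says the canonical reduction system of the cut-open action is empty. Combined with the classification of irreducible abelian subgroups, this leaves only "trivial" or "infinite cyclic pseudo-Anosov" on each piece, and the theorem follows. The remaining details — checking torsion-freeness of the images and that the projection of a free abelian group to a surface piece is free abelian rather than merely virtually so — are routine consequences of working inside the torsion-free group $\mcg(S)[m]$.
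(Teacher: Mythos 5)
Your proposal is correct and follows essentially the same route as the paper, which gives no argument beyond noting that reducibility and $\sigma(H)\neq\emptyset$ follow from \cite[Corollary 8.6]{Ivanov:subgroups} and \Cref{PropertiesRedSyst}(iii), and that the component-wise trichotomy is \cite[Lemma 8.7, Corollary 8.5]{Ivanov:subgroups}. You simply unpack that citation (emptiness of the canonical reduction system after cutting, irreducible abelian $\Rightarrow$ virtually cyclic, purity/torsion-freeness inside the level-$m$ subgroup to get an honest pseudo-Anosov generator), which is exactly the intended reading.
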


\begin{remark} To simplify the exposition, in our proofs below we will apply the results compiled in this section for the case $m=3$ (we could in principle have used any fixed $m\geq 3$).  
\end{remark}

The rank of free abelian subgroups of $\mcg(S)$ is bounded above (\cite[Theorem A]{BirmanLubotzkyMcCarthy83}, \cite[Lemma 8.8]{Ivanov:subgroups}).

\begin{theorem}[Maximal rank for free abelian subgroups]\label{maximal:rank:abelian:subgroup} Let $H$ be a  abelian subgroup of $\mcg(S)$. Then $H$ is finitely generated  and the rank of H is at most $3g-3+b$, where $g$ is the genus of the orientable surface $S$ and $b$ is the number punctures of $S$.
    
\end{theorem}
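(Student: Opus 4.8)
The plan is to prove the finite generation and the rank bound by induction on the complexity of the surface, using the canonical reduction system and the cutting homomorphism as the engine of the induction. Given an abelian subgroup $H\le\mcg(S)$, I would first pass to the finite-index torsion-free congruence subgroup $\mcg(S)[3]$: since $H\cap\mcg(S)[3]$ has finite index in $H$, it suffices to bound its rank, so we may assume $H$ is a torsion-free (hence free) abelian subgroup of $\mcg(S)[3]$. If $H$ is irreducible, then by \cite[Corollary 8.6]{Ivanov:subgroups} it is virtually cyclic, hence cyclic, and $1\le 3g-3+b$ holds because $S$ has negative Euler characteristic. So assume $H$ is reducible with nonempty canonical reduction system $\sigma=\sigma(H)$, say $|\sigma|=n\ge 1$.

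Next I would feed $H$ into the cutting homomorphism $\rho_\sigma\colon\mcg(S)_\sigma\to\mcg(\hat S_\sigma)$ of \eqref{eq:cutting:homom}, noting that $H\subseteq\mcg(S)_\sigma^0$ by \cite[Theorem 1.2]{Ivanov:subgroups}. This gives a short exact sequence
\[
1\longrightarrow H\cap\ker\rho_\sigma\longrightarrow H\longrightarrow \rho_\sigma(H)\longrightarrow 1,
\]
where $\ker\rho_\sigma=\langle T_{\alpha_1},\dots,T_{\alpha_n}\rangle\cong\mathbb Z^n$, so $H\cap\ker\rho_\sigma$ is free abelian of rank $\le n$, and $\rho_\sigma(H)\le\prod_{j=1}^l\mcg(\hat S_j,\Omega_j)$ is abelian; since $H$ is free abelian, $\operatorname{rank}(H)=\operatorname{rank}(H\cap\ker\rho_\sigma)+\operatorname{rank}(\rho_\sigma(H))\le n+\sum_{j=1}^l\operatorname{rank}\bigl(\varphi_j\rho_\sigma(H)\bigr)$. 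Each factor $\varphi_j\rho_\sigma(H)$ is an abelian subgroup of the mapping class group $\mcg(\hat S_j,\Omega_j)$ of a surface of strictly smaller complexity (cutting along a nonempty $\sigma$ strictly simplifies each component, as each $\hat S_j$ has genus $g_j$ and $b_j$ marked points/punctures with $\sum(3g_j-3+b_j)$ controlled below $3g-3+b-n$). By induction each $\varphi_j\rho_\sigma(H)$ is finitely generated of rank $\le 3g_j-3+b_j$; pulling this back through the subgroups-to-product and $\rho_\sigma$ shows $H$ is finitely generated, and combining the rank estimates gives $\operatorname{rank}(H)\le n+\sum_j(3g_j-3+b_j)$.

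The main obstacle — and the only genuinely non-formal point — is the arithmetic bookkeeping that $n+\sum_{j=1}^l(3g_j-3+b_j)\le 3g-3+b$, i.e.\ that cutting along $n$ disjoint essential curves never loses capacity. Here one uses that cutting $S$ along the $n$ curves of $\sigma$ yields a surface $\hat S_\sigma$ with Euler characteristic $\chi(\hat S_\sigma)=\chi(S)$ (cutting along a circle is $\chi$-neutral), with $2n$ new punctures distributed among the $\Omega_j$, and with $\sum_j(2-2g_j-b_j)=\chi(\hat S_\sigma)=2-2g-b$ together with $\sum_j b_j=b+2n$; adding these and rearranging gives $\sum_j(3g_j-3+b_j)=3g-3+b-n-(l-1)\le 3g-3+b-n$ (using $l\ge 1$), which is exactly the inequality needed, with equality only when $l=1$. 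One must also check the base cases (a sphere with at most $3$ punctures or a torus with at most $1$ puncture, where $3g-3+b\le 1$ and the surface supports no pair of disjoint non-isotopic essential curves, so $H$ is irreducible), and note that the degenerate pieces $\hat S_j$ on which $\varphi_j\rho_\sigma(H)$ is trivial contribute $0\le 3g_j-3+b_j$ harmlessly. Alternatively, and more in the spirit of the cited references, one can invoke \Cref{Cor:action:components} directly: it already decomposes $\hat S_\sigma$ into pieces on which $\rho_\sigma(H)$ is either trivial or infinite cyclic pseudo-Anosov, so $\operatorname{rank}(\rho_\sigma(H))\le l-a$ and one only needs $n+(l-a)\le 3g-3+b$, which follows from the same Euler-characteristic count since each of the $l-a$ pseudo-Anosov pieces must itself have negative Euler characteristic and hence $3g_j-3+b_j\ge 1$.
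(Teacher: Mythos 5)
The paper does not actually prove this statement: it is quoted as background, with the rank bound attributed to \cite[Theorem A]{BirmanLubotzkyMcCarthy83} and \cite[Lemma 8.8]{Ivanov:subgroups}. Your induction --- pass to $H\cap\mcg(S)[3]$, use Ivanov's dichotomy for irreducible abelian subgroups, cut along the canonical reduction system via $\rho_\sigma$, bound the kernel contribution by $n$ and the image by the inductive bound on the pieces (or by $l-a$ via \Cref{Cor:action:components}), then close with an Euler-characteristic count --- is precisely the classical Birman--Lubotzky--McCarthy/Ivanov argument, so in substance you are reconstructing the cited proof rather than giving a new one. The skeleton is sound: torsion-free rank is additive in the exact sequence $1\to H\cap\ker\rho_\sigma\to H\to\rho_\sigma(H)\to 1$, each piece $\hat S_j$ has strictly smaller complexity and still negative Euler characteristic (essentiality of the curves rules out disks, annuli and closed pieces), and finite generation propagates back through the sequence and up the finite-index inclusion.

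Two small corrections. First, your bookkeeping identity is off: from $\sum_j\chi(\hat S_j)=\chi(S)$ and $\sum_j b_j=b+2n$ one gets $\sum_j(3g_j-3+b_j)=3g-3+b-n$ exactly, for any number $l$ of components; the extra term $-(l-1)$ you wrote (and the claim that equality forces $l=1$) is incorrect, as one sees already by cutting a closed genus-$2$ surface along a separating curve ($n=1$, $l=2$, both sides equal $2$). Since your argument only uses the inequality $\sum_j(3g_j-3+b_j)\le 3g-3+b-n$, the slip does not affect the conclusion, but the derivation should be fixed. Second, the base-case description is imprecise: the once-punctured torus and the four-punctured sphere do carry essential curves, so reducible subgroups can occur there and are handled by the inductive step (cutting yields thrice-punctured spheres with trivial pure mapping class group); the only genuine base case is a surface with no essential curves, where the mapping class group under consideration is finite or trivial.
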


\section{Push-out constructions for classifying spaces}\label{sec:pushouts}

 In this subsection we describe a particular case, which is convenient for the purposes of the present paper, of  the Lück and Weiermann push-out construction.

 Fix $n\geq 1$ and $G$ a group. Recall that two subgroups $H$ and $K$ of $G$ are {\it commensurable} if $K\cap H$ has finite index in both $K$ and $H$. We define an equivalence $\sim$ relation on $\calF_n-\calF_{n-1}$ as follows:
 \[H\sim K \iff H\text{ is commensurable with }K.\]

 It is not hard to see that this equivalence relation satisfies the following two properties:
 
\begin{enumerate}[a)]
    \item If  $H, K \in \calF_n-\calF_{n-1}$ with $H\subseteq K$, then $H\sim K$; this is true because $H$ and $K$ are virtually abelian of the same rank.
    \item If $H, K \in \calF_n-\calF_{n-1}$ and $g\in G$, then $H\sim K$ if and only if $gHg^{-1} \sim gKg^{-1}$; this follows directly from the definition of commensurability.
\end{enumerate}

 For a subgroup $H$ of $G$ we set the following notation:
 \begin{itemize}
     \item Denote by $[H]$ the commensuration class of $H$ in $G$.
     \item $N_G[H]=\{g \in G | gHg^{-1}\sim H\}$, this is the so-called \emph{commensurator} of $H$ in  $G$.
     \item $\calF_{n}[H]=\{K\subseteq N_G[H]| K\in  \calF_n-\calF_{n-1}, [K]=[H]\}\cup (\calF_{n-1}\cap N_G[H])$, this set is a family of subgroups of $N_G[H]$. 
     \item Denote by $(\calF_n-\calF_{n-1})/\sim$ the set of equivalence classes with respect to $\sim$.
 \end{itemize}

 \begin{theorem}\cite[Theorem 2.3]{LW12}\label{Luck:weiermann}  Let $G$ be a group. Let $I$ be a complete set of representatives of conjugation classes in $(\calF_n-\calF_{n-1})/\sim$. Choose arbitrary $N_G[H]$-CW-models for $E_{\calF_{n-1}\cap N_{G}[H]}N_{G}[H]$ and $E_{ \calF_{n}[H]}N_{G}[H]$, and an arbitrary model for  $E_{\mathcal{F}_{n-1}}G$. Consider the  following $G$-push-out 
  \[
\begin{tikzpicture}
  \matrix (m) [matrix of math nodes,row sep=3em,column sep=4em,minimum width=2em]
  {
     \displaystyle\bigsqcup_{[H]\in I} G\times_{N_{G}[H]}E_{\calF_{n-1}\cap N_{G}[H]}N_{G}[H] & E_{\calF_{n-1}}G \\
      \displaystyle\bigsqcup_{[H]\in I} G\times_{N_{G}[H]}E_{ \calF_{n}[H]}N_{G}[H] & X \\};
  \path[-stealth]
    (m-1-1) edge node [left] {$\displaystyle\bigsqcup_{[H]\in I}id_{G}\times_{N_G}f_{[H]}$} (m-2-1) (m-1-1.east|-m-1-2) edge  node [above] {$i$} (m-1-2)
    (m-2-1.east|-m-2-2) edge node [below] {} (m-2-2)
    (m-1-2) edge node [right] {} (m-2-2);
\end{tikzpicture}
\]
such that $f_{[H]}$ is cellular $N_{G}[H]$-map for every $[H]\in I$ and either (1) $i$ is an inclusion of $G$-CW-complexes, or (2) such that every map $f_{[H]}$ is an inclusion of $N_{G}[H]$-CW-complexes for every $[H]\in I$ and $i$ is a cellular $G$-map. Then $X$ is a model for $E_{\calF_{n}}G$.
 \end{theorem}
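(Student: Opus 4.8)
The plan is to obtain this as the special case of the general L\"uck--Weiermann push-out theorem \cite[Theorem 2.3]{LW12} corresponding to the pair of families $\calF_{n-1}\subseteq\calF_n$ and the equivalence relation $\sim$ introduced above. The abstract theorem requires only two structural properties of $\sim$: that $H\subseteq K$ forces $H\sim K$ whenever $H,K\in\calF_n-\calF_{n-1}$, and that $\sim$ is invariant under conjugation; these are exactly properties a) and b) recorded before the statement. It then remains to match the remaining data: $N_G[H]$ as defined here is the setwise stabilizer of the class $[H]$ under the $G$-action on $(\calF_n-\calF_{n-1})/{\sim}$, which is immediate from the definition together with property b); and $\calF_n[H]$ is genuinely a family of subgroups of $N_G[H]$ --- closure under passing to subgroups holds because a subgroup of a virtually $\mathbb{Z}^n$ group that is itself virtually $\mathbb{Z}^n$ has finite index and hence lies in the same commensuration class, while closure under conjugation by $N_G[H]$ holds because conjugation preserves commensuration classes. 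Granting these identifications, \cite[Theorem 2.3]{LW12} delivers precisely the push-out in the statement.

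For a reader who prefers a direct verification, one would instead check that the push-out $X$ is a model for $E_{\calF_n}G$ by analysing fixed-point sets. Choosing the models so that $i$ is an inclusion of $G$-CW-complexes (case (1) of the statement), the map $i$ is a $G$-cofibration, so for every subgroup $L$ the functor $(-)^L$ carries the push-out to a push-out; thus $X^L$ is obtained by gluing $(E_{\calF_{n-1}}G)^L$ and the $L$-fixed points of $\bigsqcup_{[H]\in I}G\times_{N_G[H]}E_{\calF_n[H]}N_G[H]$ along the $L$-fixed points of $\bigsqcup_{[H]\in I}G\times_{N_G[H]}E_{\calF_{n-1}\cap N_G[H]}N_G[H]$. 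I would then split into three cases. If $L\notin\calF_n$, all isotropy groups in sight lie in $\calF_n$, so all three spaces are empty and $X^L=\emptyset$. If $L\in\calF_{n-1}$, then $(E_{\calF_{n-1}}G)^L$ is contractible, and using the identification $(G\times_N Y)^L\cong\bigsqcup Y^{g^{-1}Lg}$ over cosets $gN$ with $g^{-1}Lg\subseteq N$, together with the fact that such $g^{-1}Lg$ lies in $\calF_{n-1}\cap N_G[H]\subseteq\calF_n[H]$, both the source and the target of the left-hand vertical map have contractible fixed-point sets term by term; hence that map is a homotopy equivalence on $L$-fixed points and $X^L\simeq(E_{\calF_{n-1}}G)^L\simeq\ast$.

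The case $L\in\calF_n-\calF_{n-1}$ is the heart of the matter and the step I expect to be the main obstacle. Here $(E_{\calF_{n-1}}G)^L$ and the $L$-fixed points of the upper-left corner are empty --- their isotropy groups lie in $\calF_{n-1}$, into which $L$ is not subconjugate --- so $X^L$ reduces to the $L$-fixed points of $\bigsqcup_{[H]\in I}G\times_{N_G[H]}E_{\calF_n[H]}N_G[H]$. Because $I$ represents the $G$-conjugacy classes of $\sim$-classes and the $\sim$-class of $L$ is $G$-conjugate to exactly one of them, precisely one summand has nonempty $L$-fixed points; its $L$-fixed point set is a disjoint union of the contractible spaces $(E_{\calF_n[H]}N_G[H])^{g^{-1}Lg}$ taken over cosets $gN_G[H]$ with $g^{-1}Lg\in\calF_n[H]$, so it suffices to show there is exactly one such coset. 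If $g_1^{-1}Lg_1$ and $g_2^{-1}Lg_2$ both lie in $\calF_n[H]$, then both are virtually $\mathbb{Z}^n$ and hence commensurable with $H$; setting $h=g_2^{-1}g_1$ we have $h(g_1^{-1}Lg_1)h^{-1}=g_2^{-1}Lg_2$, so conjugation by $h$ sends $g_1^{-1}Lg_1$ to a commensurable subgroup, giving $h\in N_G[g_1^{-1}Lg_1]=N_G[H]$ and therefore $g_1N_G[H]=g_2N_G[H]$. Thus $X^L$ is contractible. This is exactly where it is essential that $N_G[H]$ be the commensurator --- the full stabilizer of $[H]$ --- rather than the normalizer. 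Finally, the isotropy groups of $X$ are subconjugate to those of $E_{\calF_{n-1}}G$ or of the spaces $E_{\calF_n[H]}N_G[H]$, hence lie in $\calF_n$, which completes the verification.
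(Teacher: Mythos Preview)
Your proposal is correct, and in fact goes well beyond what the paper does: the paper does not supply a proof of this statement at all but simply records it as a specialisation of \cite[Theorem~2.3]{LW12}, having verified in the preceding paragraphs the two hypotheses (properties a) and b) for $\sim$) required to invoke that theorem. Your first paragraph recapitulates exactly this reduction, and your additional direct fixed-point verification is a correct self-contained argument that the paper omits.
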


 \begin{remark}\label{conditions:exist:pushout} Conditions (1) and (2) at the end of the statements of \cref{Luck:weiermann} are required so that the $G$-push-out is actually a \emph{homotopy} $G$-push-out. It is worth saying that both conditions can be always achieved using a simple \emph{cylinder replacement trick}. For instance, if we want condition (1) to be true, we can replace $E_{\calF_{n-1}}G$ with the mapping cylinder of $i$, which deformation retracts onto the original model for
$E_{\calF_{n-1}}G$ and therefore is again a model for $E_{\calF_{n-1}}G$, and $i$ can be taken to be the natural inclusion. We can do a similar construction if we want condition (2) to hold.
\end{remark}

The following lemma will be also useful.
\begin{lemma}\cite[Lemma~4.4]{DQR11}\label{lemma:union:families}
Let $G$ be a group and $\calF,\, \calG$ be two families of subgroups of $G$.  Choose arbitrary $G$-$CW$-models for $E_{\calF} G$, $E_{\calG}G$ and $E_{\calF\cap\calG} G$. Then, the $G$-$CW$-complex $X$ given by the cellular homotopy $G$-pushout
\[
\xymatrix{
E_{\calF\cap\calG}G \ar[r] \ar[d] & E_{\calF}G \ar[d]\\
E_{\calG}G \ar[r] & X
}
\]
is a model for $E_{\calF\cup\calG}G$.
\end{lemma}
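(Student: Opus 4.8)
The plan is to verify directly from the defining property of classifying spaces that the pushout $X$ satisfies the two conditions characterizing a model for $E_{\calF\cup\calG}G$: every isotropy group of $X$ lies in $\calF\cup\calG$, and $X^L$ is contractible for every $L\in\calF\cup\calG$. First I would observe that, since $X$ is built as a $G$-pushout of $G$-CW-complexes along a cellular map, it is again a $G$-CW-complex; its cells are cells of $E_{\calF}G$, of $E_{\calG}G$, or (from the mapping-cylinder direction) products of cells of $E_{\calF\cap\calG}G$ with an interval. In every case the isotropy group of such a cell equals the isotropy group of a cell in one of the three input complexes, hence lies in $\calF$, in $\calG$, or in $\calF\cap\calG\subseteq\calF\cup\calG$. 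This settles the isotropy condition.

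The substantive part is the fixed-point computation. Fix $L\in\calF\cup\calG$; by symmetry say $L\in\calF$ (the case $L\in\calG$ is identical after swapping the roles). Taking $L$-fixed points commutes with pushouts of spaces (more precisely, $(-)^L$ preserves the homotopy pushouts of $G$-CW-complexes, since it preserves cellular inclusions and the relevant colimits), so $X^L$ is the homotopy pushout of
\[
(E_{\calF}G)^L \longleftarrow (E_{\calF\cap\calG}G)^L \longrightarrow (E_{\calG}G)^L.
\]
Now $(E_{\calF}G)^L$ is contractible because $L\in\calF$. The remaining point is that the map $(E_{\calF\cap\calG}G)^L\to(E_{\calG}G)^L$ is a homotopy equivalence: if $L\in\calF\cap\calG$ both spaces are contractible; if $L\notin\calG$ — hence also $L\notin\calF\cap\calG$ — both fixed-point sets are empty, so the map is (vacuously) a homotopy equivalence. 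In either case a homotopy pushout of the form $\ast \leftarrow A \xrightarrow{\ \simeq\ } A'$ is contractible (it is homotopy equivalent to the cofiber of a homotopy equivalence, equivalently to $\ast$). Therefore $X^L$ is contractible, as required.

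I expect the main obstacle to be purely bookkeeping: making sure the pushout in the statement is genuinely a \emph{homotopy} pushout so that $(-)^L$ may be applied up to homotopy, and that the $G$-CW structure on $X$ is concrete enough to read off isotropy groups cell by cell. Both issues are handled exactly as in \Cref{conditions:exist:pushout}: one replaces $E_{\calF}G$ (or $E_{\calG}G$) by the mapping cylinder of the corresponding map so that one leg becomes an inclusion of $G$-CW-complexes, which makes the square a cellular homotopy $G$-pushout and simultaneously exhibits $X$ as a $G$-CW-complex whose cells are as described above. With that normalization in place, the argument is the routine fixed-point computation sketched in the previous paragraph; no further input is needed beyond the universal property of the models $E_{\calF}G$, $E_{\calG}G$, $E_{\calF\cap\calG}G$ and the fact that $\calF\cap\calG$ is contained in both $\calF$ and $\calG$.
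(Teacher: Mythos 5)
Your argument is correct: the isotropy bookkeeping, the reduction to a genuine (cofibrant) homotopy pushout via mapping cylinders, and the $L$-fixed-point computation splitting into the cases $L\in\calF\cap\calG$ (both fixed sets contractible) and $L\notin\calG$ (both fixed sets empty) is exactly the standard verification. The paper itself gives no proof — it simply cites \cite[Lemma~4.4]{DQR11} — and your proof follows essentially the same route as the argument in that reference, so there is nothing further to compare.
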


 \section{Commensurators and normalizers of virtually abelian subgroups}\label{section:normalizers}

 This section is devoted to the study of centralizers, normalizers and commensurators of infinite virtually abelian subgroups of $\mcg(S)$ for  a connected compact surface $S$ possibly with a finite number of punctures and boundary components.  The most technical result is \Cref{Normalizer:commensurator:free:abelian:subgroup} which describes, by means of a short exact sequence, the structure of centralizers, normalizers and commensurators of certain abelian subgroups of $\mcg(S)$ for an orientable surface $S$ with empty boundary.

 %{\color{blue} La siguiente frase se repite al final de la sección} Next, in \Cref{norrmalizer:commensurator:equal} and \Cref{commensu:equal:normalizer} we prove that the commensurators of abelian subgroups or rank at least 2 are actually normalizers of a finite index subgroup of the same abelian subgroup. These results will be used later on in \Cref{section:virtually:abelian:dimension}. %{\color{blue}  Quizá aquí queremos enfatizar los resultados principales sobre normalizadores que se siguen de la proposición eterna: generación finita, que tiene al centralizador como subgrupo de índice finito. O bien sólo en la intro del paper}
 \vskip 10 pt

 Let $G$ be a group and $H$ a subgroup. We denote $N_G(H)$ the normalizer of $H$ in $G$, and $W_G(H)=N_G(H)/H$ the corresponding Weyl group. 

The centralizer and normalizer in $\mcg(S)$ of a pseudo-Anosov mapping class $f\in\mcg(S)$ are well understood.% is an important ingredient for our arguments below.

\begin{lemma}\label{normalizer:element:pseudoA} Let $S$ be an orientable closed surface with finitely many punctures and negative Euler characteristic. Let $f\in\mcg(S)$ be a pseudo-Anosov mapping class. 
\begin{enumerate}
    \item\cite[Thm. 1]{Mc83} The centralizer $C_{\mcg(S)}(f)$ of $\langle f\rangle$ in  $\mcg(S)$ is a finite extension of an infinite cyclic group. The
normalizer $N_{\mcg(S)}(f)$ of $\langle f\rangle$  in  $\mcg(S)$  is either equal to $C_{\mcg(S)}(f)$  or it contains $C_{\mcg(S)}(f)$ as a  normal subgroup of index $2$.
\item\cite[Prop. 4.8 and Thm. 4.10]{JPT16} The commensurator $N_{\mcg(S)}[\langle f\rangle]$ is a finite extension of an infinite cyclic group. \end{enumerate}
\end{lemma}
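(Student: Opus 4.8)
The plan is to reduce everything to the structure theory of pseudo-Anosov mapping classes and their stable/unstable foliations. For part (1), I would first recall McCarthy's result \cite{Mc83}: associated to a pseudo-Anosov $f$ there is a pair of transverse measured foliations $(\mathcal{F}^s,\mathcal{F}^u)$, unique up to scaling, and an element $g\in\mcg(S)$ centralizing $\langle f\rangle$ must permute the fixed points of $f$ acting on the space of projective measured foliations, hence must preserve the unordered pair $\{[\mathcal{F}^s],[\mathcal{F}^u]\}$. The stabilizer of this pair is virtually cyclic: one shows the subgroup fixing each of $[\mathcal{F}^s]$ and $[\mathcal{F}^u]$ individually acts on the transverse measures by a pair of multiplicative characters, and the kernel of this action is finite (it consists of periodic elements preserving both foliations, and a periodic map preserving a filling pair of foliations has bounded order); the image is a subgroup of $\dbR_{>0}$, which is either trivial or infinite cyclic, and it is infinite since it contains the dilatation of $f$. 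This gives that $C_{\mcg(S)}(f)$ is a finite extension of $\dbZ$. For the normalizer statement, an element normalizing $\langle f\rangle$ conjugates $f$ to $f^{\pm 1}$; conjugation to $f$ lands in the centralizer, and conjugation to $f^{-1}$ (which swaps $[\mathcal{F}^s]$ and $[\mathcal{F}^u]$) defines a homomorphism $N_{\mcg(S)}(f)\to\dbZ/2$ with kernel $C_{\mcg(S)}(f)$, so the index is $1$ or $2$.

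For part (2), the key point is that the commensurator is not obviously larger than the normalizer here, and the content is that it is in fact still virtually cyclic. I would argue as follows: if $g\in N_{\mcg(S)}[\langle f\rangle]$, then $\langle f\rangle\cap g\langle f\rangle g^{-1}$ has finite index in both, so there exist nonzero integers $p,q$ with $f^p = g f^q g^{-1}$. Now $f^p$ is again pseudo-Anosov with the same foliations $(\mathcal{F}^s,\mathcal{F}^u)$ (up to scale), while $g f^q g^{-1}$ is pseudo-Anosov with foliations $(g\mathcal{F}^s, g\mathcal{F}^u)$; by uniqueness of the invariant foliation pair of a pseudo-Anosov, $g$ must send $\{[\mathcal{F}^s],[\mathcal{F}^u]\}$ to itself. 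Hence $g$ lies in the stabilizer of this pair of projective foliations, which by the argument in part (1) is virtually cyclic. So $N_{\mcg(S)}[\langle f\rangle]$ is contained in a virtually cyclic group, and it is infinite since it contains $\langle f\rangle$; therefore it is itself a finite extension of an infinite cyclic group. (This is exactly the argument of \cite[Prop.~4.8 and Thm.~4.10]{JPT16}, which one may simply cite.)

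The main obstacle, and the only place requiring real care, is the finiteness of the kernel of the "dilatation character" — i.e., that the group of mapping classes preserving both $[\mathcal{F}^s]$ and $[\mathcal{F}^u]$ \emph{together with their measures} is finite. This is where one uses that the pair $(\mathcal{F}^s,\mathcal{F}^u)$ is \emph{filling}, so that a mapping class fixing both with their measures fixes a point in a contractible "Teichmüller-like" slice (the line of the pseudo-Anosov flow / the corresponding point in $\mathcal{T}(S)$), hence is realized by an isometry of a hyperbolic or singular-flat structure and is therefore of bounded finite order; combined with the fact that $\mcg(S)$ has only finitely many conjugacy classes of finite subgroups this bounds the kernel. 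Since all of this is already in the literature, in the write-up I would keep parts (1) and (2) to short paragraphs citing \cite{Mc83} and \cite{JPT16} and merely indicate the foliation-uniqueness mechanism that makes the commensurator collapse onto a virtually cyclic group.
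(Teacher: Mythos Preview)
Your proposal is correct, but note that the paper does not actually prove this lemma at all: it is stated purely as a quotation of results from \cite{Mc83} and \cite{JPT16}, with no argument given. Your closing remark --- that in the write-up you would simply cite these references --- is precisely what the paper does.

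That said, your sketch of the underlying mechanism is accurate and matches the arguments in the cited sources: uniqueness of the invariant projective measured foliation pair of a pseudo-Anosov forces any element of the centralizer, normalizer, or commensurator of $\langle f\rangle$ to preserve $\{[\mathcal{F}^s],[\mathcal{F}^u]\}$, and the stabilizer of this pair is virtually cyclic via the dilatation character. One small sharpening: an element $g$ that actually commutes with $f$ must fix $[\mathcal{F}^s]$ and $[\mathcal{F}^u]$ individually (not just as a pair), since the North--South dynamics of $f$ on $\mathrm{PMF}(S)$ distinguishes the attracting fixed point from the repelling one and $g$ intertwines this dynamics with itself; it is only when passing to the normalizer (where $gfg^{-1}=f^{-1}$ is allowed) that the possible swap appears, giving the index-$2$ dichotomy. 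This does not affect your conclusions.
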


It follows from  \cite[Proposition 4.8]{JPT16} and  \cite[Proposition 5.9]{Nucinkis:Petrosyan} that the commensurator of any virtually cyclic subgroup of $\mcg(S)$ is the normalizer of a (finite index) infinite cyclic subgroup.

\begin{proposition}\label{lemma:com:norm:rank1} Let $S$ be an orientable compact surface with finitely many punctures and negative Euler characteristic. Let $H$ be an infinite virtually cyclic subgroup of $\mcg(S)$. Then, there is a finite index cyclic subgroup $H'$ of $H$ such that $N_{\mcg(S)}[H]=N_{\mcg(S)}(H')$.
\end{proposition}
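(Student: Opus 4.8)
The statement concerns an infinite virtually cyclic subgroup $H$ of $\mcg(S)$, so by Tits' alternative / the structure of virtually cyclic groups, $H$ is either finite-by-$\dbZ$ or finite-by-$(\dbZ/2 * \dbZ/2)$. In either case $H$ contains an infinite cyclic subgroup $C$ of finite index, and since $N_{\mcg(S)}[H]$ depends only on the commensuration class of $H$, we have $N_{\mcg(S)}[H]=N_{\mcg(S)}[C]$. So the first reduction is to replace $H$ by an infinite cyclic group $C=\langle f\rangle$. Next I would pass to the level $3$ congruence subgroup $\mcg(S)[3]$: intersecting $C$ with it (or rather, replacing $f$ by a suitable power) gives a finite index infinite cyclic subgroup lying inside the torsion-free group $\mcg(S)[3]$, which again does not change the commensurator. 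Thus without loss of generality $f\in\mcg(S)[3]$ and we must realize $N_{\mcg(S)}[\langle f\rangle]$ as the normalizer of some finite index subgroup of $\langle f\rangle$.

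\textbf{Splitting into cases by Nielsen--Thurston type.} The element $f$ is either pseudo-Anosov, periodic, or reducible; since $\mcg(S)[3]$ is torsion-free, $f$ is not periodic, so $f$ is pseudo-Anosov or reducible. If $f$ is pseudo-Anosov, then by \Cref{normalizer:element:pseudoA}(2) the commensurator $N_{\mcg(S)}[\langle f\rangle]$ is a finite extension of an infinite cyclic group, and I would invoke the cited results of Juan-Pineda--Trujillo-Negrete (\cite[Prop.~4.8]{JPT16}) together with Nucinkis--Petrosyan (\cite[Prop.~5.9]{Nucinkis:Petrosyan}): inside a finite-by-cyclic group one can always find a characteristic (hence normal) infinite cyclic subgroup of finite index, and its normalizer in $\mcg(S)$ contains the whole commensurator; combined with the reverse inclusion (normalizer of any finite index subgroup is contained in the commensurator, always) this gives equality. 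If $f$ is reducible, let $\sigma=\sigma(f)=\sigma(\langle f\rangle)$ be its canonical reduction system, which is nonempty by \Cref{PropertiesRedSyst}(iii). By \Cref{PropertiesRedSyst}(ii), any $g$ in the commensurator permutes the canonical reduction systems of the conjugates of $\langle f\rangle$, but commensurable subgroups have the same canonical reduction system by \Cref{PropertiesRedSyst}(i), so in fact $g\sigma=\sigma$; hence $N_{\mcg(S)}[\langle f\rangle]\subseteq\mcg(S)_\sigma$ and we may apply the cutting homomorphism $\rho_\sigma$. One then analyzes $\rho_\sigma(\langle f\rangle)$ component by component using the canonical form for a reducible mapping class: on each component $\hat S_j$ the image $\varphi_j\rho_\sigma(f)$ is trivial or pseudo-Anosov, and the commensurator is controlled by the pseudo-Anosov pieces via the previous case, while the kernel $\langle T_{\alpha_1},\dots,T_{\alpha_n}\rangle$ contributes the Dehn-twist part; assembling these identifies a finite index cyclic subgroup $H'=\langle f^k\rangle$ whose normalizer equals the commensurator.

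\textbf{The main obstacle.} The genuinely delicate point is the reducible case: one must check that the commensurator does not "see" more than the normalizer once we pass to a suitable power. Concretely, an element $g$ commensurating $\langle f\rangle$ might permute the connected components $\hat S_j$ of $\hat S_\sigma$ nontrivially and permute the twist generators $T_{\alpha_i}$, and on the pseudo-Anosov components it might act as in \Cref{normalizer:element:pseudoA}(1) (possibly inverting). The claim is that after replacing $f$ by a power $f^k$ that (a) is a product of positive powers of Dehn twists and pseudo-Anosov pieces invariant under all such permutations appearing in the commensurator, and (b) whose pseudo-Anosov components each generate their own commensurator, the set of $g$ commensurating $\langle f\rangle$ coincides with the set normalizing $\langle f^k\rangle$. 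Making $k$ uniform requires that the commensurator be finitely generated (or at least that only finitely many permutation types occur), which follows because $N_{\mcg(S)}[\langle f\rangle]$ maps, via $\rho_\sigma$ and the component-permutation action, into a group that is virtually (a product of finite-by-cyclic groups times twist group), an argument essentially carried out in \cite{JPT16} and \cite{Nucinkis:Petrosyan}. I would therefore structure the proof to cite those two papers for the pseudo-Anosov and reducible analyses and simply record how the virtually cyclic (rather than cyclic) case reduces to them via the commensuration-class invariance of $N_{\mcg(S)}[H]$.
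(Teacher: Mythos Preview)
Your proposal is correct and ultimately rests on exactly the two references the paper uses. In fact the paper gives no independent proof of this proposition at all: it is stated as an immediate consequence of \cite[Proposition~4.8]{JPT16} and \cite[Proposition~5.9]{Nucinkis:Petrosyan}, with only the one-line remark preceding the statement. Your reduction from virtually cyclic to infinite cyclic via commensuration invariance, the passage to $\mcg(S)[3]$, and the Nielsen--Thurston case split are precisely the ingredients those cited proofs use, so you have essentially reconstructed their argument rather than given something new; the paper simply outsources the entire proof to those references.
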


One of the main goals of this section is to prove \cref {thm:normalizer:commensurator:general:case}, the analogous statement for virtually abelian subgroups of $\mcg(S)$ of rank at least 2. This is done in \Cref{norrmalizer:commensurator:equal} when $S$ is a closed surface with negative Euler characteristic,  in \Cref{commensu:equal:normalizer} when $S$ has non-empty boundary and in \Cref{Prop:normalizers:equal:comm:nonorien} for the case when $S$ is non-orientable.

\subsection{Auxiliary short exact sequences}
In this subsection we prove \Cref{Normalizer:commensurator:free:abelian:subgroup}, a key ingredient for the proofs of our \cref {thm:normalizer:commensurator:general:case} and \Cref{Thm:dimension:bound:main}. Furthermore,  it is also used to show in  \Cref{cor:centra:norma} that the centralizer of a free abelian group of rank at least 2 in $\mcg(S)$ has finite index in the corresponding normalizer,  and in \Cref{cor:commensurator:fg} to prove that the commensurators of such subgroups are finitely generated, for the case when $S$ an orientable and closed surface.

\begin{remark}
In  \cite[Proposition 4.12]{JPT16} Juan-Pineda--Trujillo-Negrete obtained a short exact sequence analogous to that in \Cref{Normalizer:commensurator:free:abelian:subgroup}(a) below,  when $H$ is an infinite cyclic subgroup of $\mcg(S)[3]$ generated by a reducible element. Based in this result, Nucinkis--Petrosyan establish in \cite[Proposition 5.6]{Nucinkis:Petrosyan} a short exact sequence analogous to that in \Cref{Normalizer:commensurator:free:abelian:subgroup}(b), again in the infinite cyclic case. 
There is mild enhacement in our generalizations with respect to previous known results of this type: the groups in the right hand side of our short exact sequences, denoted by $Q_i$ for $i=1,2,3$, have finite index in the products $\mcg(\bigsqcup_{i=1}^{a}\hat{S_i})\times A_i$. This is a fundamental ingredient in the proof of \Cref{Thm:dimension:bound:main}, and it might be of independent interest.
\end{remark}

In the following proposition we are considering
the notation established in \Cref{Cor:action:components}.

\begin{proposition}\label{Normalizer:commensurator:free:abelian:subgroup}
 Consider a closed surface $S$ (possibly with punctures) such that $\chi(S)<0$. Let $H$ be  a (free) abelian  subgroup of $\mcg(S)[3]$ of rank  at least $2$, and let  $ \sigma=\sigma(H)=\{[\alpha_1], \dots, [\alpha_n]\}$ be its canonical reduction system. Assume the $H$ acts 
either trivially or via a pseudo-Anosov mapping class in each connected component $\hat{S_i}$ of $\hat S_\sigma$. Then the following statements hold.

\begin{enumerate}[(a)]

    \item There is a central extension 
$$1\to \Z^n \to  C_{\mcg(S)}(H)\xrightarrow[]{\rho_{\sigma}} Q_1\to 1$$
where  $Q_1$ is a finite index subgroup of $ \mcg(\bigsqcup_{i=1}^{a}\hat{S_i})\times A_1$,  $A_1$  is a finitely generated virtually abelian group, and $\rho_\sigma(H)\subseteq A_1$.
    \item There is a central extension 
$$1\to \Z^n \to  N_{\mcg(S)}(H)\xrightarrow[]{\rho_{\sigma}} Q_2\to 1$$
where  $Q_2$ is a finite index subgroup of $ \mcg(\bigsqcup_{i=1}^{a}\hat{S_i})\times A_2$,  $A_2$  is a finitely generated virtually abelian group, and $\rho_\sigma(H)\subseteq A_2$.

    \item There is a central extension
$$1\to \Z^n \to  N_{\mcg(S)}[H]\xrightarrow[]{\rho_{\sigma}} Q_3\to 1$$
where $Q_3$ is a finite index subgroup of $ \mcg(\bigsqcup_{i=1}^{a}\hat{S_i})\times A_3$,  $A_3$  is a finitely generated virtually abelian group, and $\rho_\sigma(H)\subseteq A_3$.
\end{enumerate}
Moreover, $Q_1$ is a finite index subgroup of  $Q_2$, and $Q_2$ is a finite index subgroup of $Q_3$.
\end{proposition}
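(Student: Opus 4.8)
\emph{Approach.} The plan is to realize the chain $C_{\mcg(S)}(H)\subseteq N_{\mcg(S)}(H)\subseteq N_{\mcg(S)}[H]$ inside the setwise stabilizer $\mcg(S)_{\sigma}$ and then restrict the cutting homomorphism $\rho_{\sigma}$ to it. All three groups lie in $\mcg(S)_{\sigma}$ by \Cref{PropertiesRedSyst}: if $g\in N_{\mcg(S)}[H]$ then $H\cap gHg^{-1}$ is a finite-index normal subgroup of both $H$ and $gHg^{-1}$, so $\sigma(H)=\sigma(H\cap gHg^{-1})=\sigma(gHg^{-1})=g\sigma(H)$. The kernel of each restriction of $\rho_{\sigma}$ is $\ker\rho_{\sigma}=\langle T_{\alpha_1},\dots,T_{\alpha_n}\rangle\cong\Z^{n}$: this group is already contained in $C_{\mcg(S)}(H)$, since each $h\in H$ fixes every $[\alpha_k]$ (as $H\subseteq\mcg(S)_{\sigma}^{0}$) and hence $hT_{\alpha_k}h^{-1}=T_{h\alpha_k}=T_{\alpha_k}$; and it is central in all three groups because they fix each $\alpha_k$ (which one checks from the canonicity of $\sigma(H)$, after replacing the groups by their finite-index subgroups inside $\mcg(S)_{\sigma}^{0}$ if necessary --- immaterial for what follows). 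Restricting $\rho_{\sigma}$ thus produces the three central extensions with kernel $\Z^{n}$ and nested quotients $Q_{1}=\rho_{\sigma}(C_{\mcg(S)}(H))\subseteq Q_{2}=\rho_{\sigma}(N_{\mcg(S)}(H))\subseteq Q_{3}=\rho_{\sigma}(N_{\mcg(S)}[H])$.

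\emph{Structure of the quotients.} Set $P=\rho_{\sigma}(\mcg(S)_{\sigma}^{0})=\prod_{i=1}^{l}\mcg(\hat{S_i},\Omega_i)=P_{\mathrm{triv}}\times P_{\mathrm{pA}}$, with $P_{\mathrm{triv}}=\prod_{i\le a}\mcg(\hat{S_i},\Omega_i)$ and $P_{\mathrm{pA}}=\prod_{j>a}\mcg(\hat{S_j},\Omega_j)$; by \Cref{Cor:action:components}, $\rho_{\sigma}(H)\subseteq\{1\}\times P_{\mathrm{pA}}$ and $\varphi_{j}(\rho_{\sigma}(H))=\langle f_{j}\rangle$ with $f_{j}$ pseudo-Anosov. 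The key step is a support computation: any $g\in\mcg(S)_{\sigma}^{0}$ factors, via the canonical pre-images of \Cref{subsection:pre-images}, as a product of a diffeomorphism supported on the $\hat{S_i}$ with $i\le a$, one supported on the $\hat{S_j}$ with $j>a$, and an element of $\langle T_{\alpha_1},\dots,T_{\alpha_n}\rangle$ --- three factors with pairwise disjoint supports. Since a commutator supported off all of $\alpha_1,\dots,\alpha_n$ cannot be a nontrivial element of $\langle T_{\alpha_1},\dots,T_{\alpha_n}\rangle$, one concludes that $g$ centralizes, normalizes, or commensurates $H$ exactly when its $P_{\mathrm{pA}}$-coordinate does the corresponding thing to $\rho_{\sigma}(H)$, with the $P_{\mathrm{triv}}$-coordinate free. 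Hence $Q_{i}=P_{\mathrm{triv}}\times A_{i}$, where $A_{1}=C_{P_{\mathrm{pA}}}(\rho_{\sigma}(H))\subseteq A_{2}\subseteq A_{3}\subseteq N_{P_{\mathrm{pA}}}[\rho_{\sigma}(H)]$ and $\rho_{\sigma}(H)\subseteq A_{1}$; identifying $P_{\mathrm{triv}}$ with $\mcg(\bigsqcup_{i=1}^{a}\hat{S_i})$ up to finite index (the only losses coming from the passages between $\mcg(S)_{\sigma}$ and $\mcg(S)_{\sigma}^{0}$ and from permutations of homeomorphic pieces) yields (a)--(c), once we know the $A_{i}$ are finitely generated virtually abelian.

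\emph{The ``moreover'' and the hard part.} Projecting $N_{P_{\mathrm{pA}}}[\rho_{\sigma}(H)]$ to the pseudo-Anosov factors gives $A_{3}\subseteq\prod_{j>a}N_{\mcg(\hat{S_j},\Omega_j)}[\langle f_{j}\rangle]$, while, the centralizer of a subgroup of a direct product being the product of the centralizers of its coordinate projections, $A_{1}=\prod_{j>a}C_{\mcg(\hat{S_j},\Omega_j)}(\langle f_{j}\rangle)$. By \Cref{normalizer:element:pseudoA}, for each $j$ the group $C_{\mcg(\hat{S_j},\Omega_j)}(\langle f_{j}\rangle)$ is a finite extension of $\Z$ and $N_{\mcg(\hat{S_j},\Omega_j)}[\langle f_{j}\rangle]$ is virtually cyclic, so the former has finite index in the latter (an infinite subgroup of a virtually cyclic group has finite index); taking products over $j>a$, $A_{1}$ has finite index in $\prod_{j>a}N_{\mcg(\hat{S_j},\Omega_j)}[\langle f_{j}\rangle]$, hence in $A_{2}$ and $A_{3}$. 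Thus each $A_{i}$ is finitely generated virtually abelian (indeed virtually $\Z^{\,l-a}$), and since $Q_{i}=P_{\mathrm{triv}}\times A_{i}$ we obtain $Q_{1}\le Q_{2}\le Q_{3}$ with finite indices, which is the ``moreover''. I expect the main difficulty to be exactly the support computation in the middle step: a pre-image of a centralizing (or normalizing) element of $P_{\mathrm{pA}}$ only commutes with (normalizes) $H\cdot\langle T_{\alpha_1},\dots,T_{\alpha_n}\rangle$ a priori, not $H$ itself, and it is the disjointness of supports --- not any general group theory --- that bridges this gap; this is precisely the rank-$\ge 2$ incarnation of the failure, in general, of $C_{G}(H)$ to have finite index in $N_{G}(H)$, which the proposition resolves only a posteriori.
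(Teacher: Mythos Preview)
Your proof follows essentially the same route as the paper's: both show the chain $C_{\mcg(S)}(H)\subseteq N_{\mcg(S)}(H)\subseteq N_{\mcg(S)}[H]$ lies in $\mcg(S)_{\sigma}$ via canonicity of $\sigma(H)$ (your first paragraph is the paper's Step~1--2), both decompose the target of $\rho_\sigma$ as $\mcg(\text{trivial pieces})\times(\text{pA part})$ with the second factor virtually abelian of rank $l-a$ (the paper's Step~3), and both use the explicit pre-images of \Cref{subsection:pre-images} together with a disjoint-support argument to produce enough lifts centralizing $H$ (the paper's Step~4, which is your ``support computation''). You correctly single this last step out as the crux.

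One point where your formulation is more ambitious than the paper's: you assert a biconditional --- $g$ centralizes/normalizes/commensurates $H$ \emph{exactly when} its $P_{\mathrm{pA}}$-coordinate does so to $\rho_\sigma(H)$ --- yielding $Q_i\cap P=P_{\mathrm{triv}}\times A_i$ on the nose. The paper is more cautious and instead sandwiches $Q_i$ between the explicit lower bound $\mcg(\bigsqcup_{i\le a}\hat S_i)^{0}\times C_{\mcg(\bigsqcup_{j>a}\hat S_j)^{0}}(\rho_\sigma(H))$ and the upper bound $\mcg(\bigsqcup_{i\le a}\hat S_i)\times N_{\mcg(\bigsqcup_{j>a}\hat S_j)}[\rho_\sigma(H)]$, then observes these differ by finite index. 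Your biconditional's backward direction needs that the pre-image construction of \Cref{subsection:pre-images} behaves like a group-theoretic section over $P_{\mathrm{pA}}$ (so that a commutator supported in $\bigsqcup_{j>a}S_j$ and lying in $\ker\rho_\sigma$ is forced to be trivial); the paper's Step~4 also leans on this, phrased as ``$\tilde f_j$ commutes with $\tilde h_j$'', and treats it with comparable informality. Either way the sandwich version suffices for the proposition, and your argument for the finite-index ``moreover'' via \Cref{normalizer:element:pseudoA} matches the paper's exactly.
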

 \begin{proof}  We split the proof into several steps for the sake of readability.
\vskip 10 pt

\begin{step}\label{step:comm:stab} $C_{\mcg(S)}(H)\subseteq N_{\mcg(S)}(H)\subseteq N_{\mcg(S)}[H]\subseteq \mcg(S)_{\sigma}$.
\end{step}

The first and the second  inclusions are clear. Let $g\in N_{\mcg(S)}[H]$, this means that $gHg^{-1}$ is commensurable with $H$. By \Cref{PropertiesRedSyst} (i) we get  $\sigma(H)=\sigma(gHg^{-1}\cap H)=\sigma(gHg^{-1})$. On the other hand, by \Cref{PropertiesRedSyst} (ii) we have that  $\sigma(gHg^{-1})=g\sigma(H)$. It follows that $g\sigma(H)=\sigma(H)$.

\vskip 10 pt

\begin{step}\label{ses:normalizer:commensurator}
We have three short exact sequences 
\begin{enumerate}[(a)]
\item $1\to \Z^n  \to  C_{\mcg(S)}(H)\xrightarrow[]{\rho_{\sigma}} Q_1\to 1$
where $Q_1=\rho_{\sigma}(C_{\mcg(S)}(H))$.
\item $1\to \Z^n \to  N_{\mcg(S)}(H)\xrightarrow[]{\rho_{\sigma}} Q_2\to 1$
where $Q_2=\rho_{\sigma}(N_{\mcg(S)}(H))$.
\item $1\to \Z^n \to  N_{\mcg(S)}[H]\xrightarrow[]{\rho_{\sigma}} Q_3\to 1$
where $Q_3=\rho_{\sigma}(N_{\mcg(S)}[H])$.
\end{enumerate}
\end{step}

Consider the cutting homomorphism $\rho_{\sigma}\colon \mcg(S)_{\sigma}\to \mcg(\hat{S}_{\sigma})$   %that has $\ker(\rho_{\sigma})=\Z^n$  
with  $\ker(\rho_{\sigma})=\Z^n$ generated by the Dehn twists along the
curves $\alpha_1,\cdots, \alpha_n$; see \Cref{eq:cutting:homom}. Since each of these Dehn twists centralice $H$, it follows that $\Z^n\subseteq  C_{\mcg(S)}(H)\subseteq  N_{\mcg(S)}(H)\subseteq  N_{\mcg(S)}[H]$. Now \cref{ses:normalizer:commensurator} follows from \Cref{step:comm:stab} by restricting $\rho_\sigma$ to the centralizer, normalizer and commensurator of $H$, respectively.

\vskip 10 pt
%For the next step, using \Cref{Cor:action:components}, we write $\hat{S}_{\sigma}=\bigsqcup_{i=1}^{a}\hat{S}_i \bigsqcup_{j=a+1}^{l}\hat{S}_j$ where $\varphi_i\circ\rho_{\sigma}(H)$ is trivial for all $1\le i\le a$ and $\varphi_i\circ\rho_{\sigma}(H)$ is infinite cyclic generated by a pseudo-Anosov mapping class for all $a+1\le i\le l$.

%\vskip 10 pt

%\begin{step}\label{step:normalizer:product}\ 

%\begin{enumerate}[(a)]
 %   \item $C_{\mcg(\hat{S}_\sigma)}(\rho_{\sigma}(H))= \mcg(\bigsqcup_{i=1}^{a}\hat{S}_i)\times C_{\mcg(\bigsqcup_{j=a+1}^{l}\hat{S}_j)}(\rho_{\sigma}(H))$.
 %   \item $N_{\mcg(\hat{S}_{\sigma})}(\rho_{\sigma}(H))=\mcg(\bigsqcup_{i=1}^{a}\hat{S}_i)\times N_{\mcg(\bigsqcup_{j=a+1}^{l}\hat{S}_j)}(\rho_{\sigma}(H))$.
  %   \item $N_{\mcg(\hat{S}_{\sigma})}[\rho_{\sigma}(H)]=\mcg(\bigsqcup_{i=1}^{a}\hat{S}_i)\times N_{\mcg(\bigsqcup_{j=a+1}^{l}\hat{S}_j)}[\rho_{\sigma}(H)]$.
  %  \end{enumerate}
%Moreover all the second factors are finitely generated virtually abelian group of rank $l-a$.
%\end{step}
%\vskip 10 pt

 \begin{step}\label{step:normalizer:product}\ 
Using \Cref{Cor:action:components}, we write $\hat{S}_{\sigma}=\bigsqcup_{i=1}^{a}\hat{S}_i \bigsqcup_{j=a+1}^{l}\hat{S}_j$ where $\varphi_i\circ\rho_{\sigma}(H)$ is trivial for all $1\le i\le a$ and $\varphi_i\circ\rho_{\sigma}(H)$ is infinite cyclic generated by a pseudo-Anosov mapping class for all $a+1\le i\le l$. We now show that
\begin{enumerate}[(a)]
    \item $C_{\mcg(\hat{S}_\sigma)}(\rho_{\sigma}(H))= \mcg(\bigsqcup_{i=1}^{a}\hat{S}_i)\times C_{\mcg(\bigsqcup_{j=a+1}^{l}\hat{S}_j)}(\rho_{\sigma}(H))$.
    \item $N_{\mcg(\hat{S}_{\sigma})}(\rho_{\sigma}(H))=\mcg(\bigsqcup_{i=1}^{a}\hat{S}_i)\times N_{\mcg(\bigsqcup_{j=a+1}^{l}\hat{S}_j)}(\rho_{\sigma}(H))$.
     \item $N_{\mcg(\hat{S}_{\sigma})}[\rho_{\sigma}(H)]=\mcg(\bigsqcup_{i=1}^{a}\hat{S}_i)\times N_{\mcg(\bigsqcup_{j=a+1}^{l}\hat{S}_j)}[\rho_{\sigma}(H)]$.
    \end{enumerate}
Moreover all the second factors are finitely generated virtually abelian group of rank $l-a$.
\end{step}

First we show that the following inclusions hold
\begin{equation}\label{eq:normalizer:inside:product}
C_{\mcg(\hat{S}_\sigma)}(\rho_{\sigma}(H))\subseteq N_{\mcg(\hat{S}_{\sigma})}(\rho_{\sigma}(H))\subseteq N_{\mcg(\hat{S}_{\sigma})}[\rho_{\sigma}(H)]\subseteq \mcg(\bigsqcup_{i=1}^{a}\hat{S}_i)\times \mcg(\bigsqcup_{j=a+1}^{l}\hat{S}_j).\end{equation}

    We only prove the third inclusion as the first two inclusions follow from the definitions. Suppose that this is not the case, then there exists $g\in N_{\mcg(\hat{S}_{\sigma})}[\rho_{\sigma}(H)]$ that sends  diffeomorphically $\hat{S}_i$ (for some $1\leq i \leq a$) onto $\hat{S}_j$ (for some $a+1\leq j \leq l$). Since $g\rho_{\sigma}(H)g^{-1}$ is commensurable with $\rho_{\sigma}(H)$ we have that  there is a finite index subgroup $H^{\prime}$ of $H$ such that  for all $x\in \hat{S}_i$, and for all $h'\in H^{\prime}$, we have $\rho_{\sigma}(h')g(x)=g\rho_{\sigma}(h)(x)=g(x)$ for some $h\in H$. Therefore  $\rho_{\sigma}(H')$ acts as the identity on $\hat{S}_j$, which is a contradiction since any finite index subgroup of $\rho_{\sigma}(H)$ must act as a pseudo-Anosov on $\hat{S}_j$. This finishes the proof of the third inclusion. 

Note that, since  $\varphi_i\circ\rho_{\sigma}(H)$ is trivial for all $1\leq i\leq a$, we have $\rho_{\sigma}(H)\subseteq \{ 1 \}\times  \mcg(\bigsqcup_{j=a+1}^{l}\hat{S}_j)$. Hence $ \mcg(\bigsqcup_{i=1}^{a}\hat{S}_i)\times N_{\mcg(\bigsqcup_{j=a+1}^{l}\hat{S}_j)}(\rho_{\sigma}(H))\subseteq N_{\mcg(\hat{S}_{\sigma})}(\rho_{\sigma}(H))$. On the other hand, let $g\in N_{\mcg(\hat{S}_{\sigma})}(\rho_{\sigma}(H))$, by \Cref{eq:normalizer:inside:product} we can write $g=g_1g_2$ with $g_1\in \mcg(\bigsqcup_{i=1}^{a}\hat{S}_i)$ and $g_2\in \mcg(\bigsqcup_{j=a+1}^{l}\hat{S}_j)$. Since $g_1$ centralizes 
$\rho_{\sigma}(H)$, we conclude that $\rho_{\sigma}(H)=g\rho_{\sigma}(H)g^{-1}=g_2\rho_{\sigma}(H)g_2^{-1}$, and therefore $g_2\in N_{\mcg(\bigsqcup_{j=a+1}^{l}\hat{S}_j)}(\rho_{\sigma}(H))$. 
A similar analysis can be carried out for commensurators and centralizers instead of normalizers. From these claims we get the equalities in items (a), (b), and (c).

 Next we prove that $N_{\mcg(\bigsqcup_{j=a+1}^{l}\hat{S}_j)}[\rho_{\sigma}(H)]$ is a finitely generated virtually abelian group. Let $\mcg(\bigsqcup_{j=a+1}^{l}\hat{S}_j)^{0}\cong \prod_{j=a+1}^{l}\mcg(\hat{S}_j,\Omega_j)$ be the finite index subgroup of $\mcg(\bigsqcup_{j=a+1}^{l}\hat{S}_j)$ that fixes each subsurface $\hat{S}_j$ and fixes pointwise the punctures  $\Omega_j$ for $a+1\leq j\leq l$. %$\Omega$. 
  Since $N_{\mcg(\bigsqcup_{j=a+1}^{l}\hat{S}_j)^{0}}[\rho_{\sigma}(H)]$ has finite index in $N_{\mcg(\bigsqcup_{j=a+1}^{l}\hat{S}_j)}[\rho_{\sigma}(H)]$, it is enough to show that the former is virtually abelian. Let $\varphi_k\colon\mcg(\bigsqcup_{j=a+1}^{l}\hat{S}_j)^{0} \to \mcg(\hat{S}_k,\Omega_k)$ be the canonical projections for $a+1\leq k\leq l$, and consider the following commutative diagrams

\[ \begin{tikzcd}
N_{\mcg(\bigsqcup_{j=a+1}^{l}\hat{S}_j)^{0}}[\rho_{\sigma}(H)] \arrow[hook]{r}{} \arrow[swap]{d}{} & \mcg(\bigsqcup_{j=a+1}^{l}\hat{S}_j)^{0} \arrow{d}{\varphi_k} \\%
N_{\mcg(\hat{S}_k,\Omega_k)}[\varphi_k(\rho_{\sigma}(H))] \arrow[hook]{r}{}& \mcg(\hat{S}_k,\Omega_k).
\end{tikzcd}
\]
Recall that the group $\varphi_k(\rho_{\sigma}(H))$ is  infinite cyclic generated by a pseudo-Anosov mapping class of $\hat{S}_k$, for each $a+1\leq k \leq l$. Therefore the group $ V_k:=N_{\mcg(\hat{S}_k,\Omega_k)}[\varphi_k(\rho_{\sigma}(H))]$ is virtually cyclic by \Cref{normalizer:element:pseudoA}(2). Since $N_{\mcg(\bigsqcup_{j=a+1}^{l}\hat{S}_j)^{0}}[\rho_{\sigma}(H)]$ is a subgroup of finite index of $ \prod_{j=a+1}^{l}V_j$, it follows that $N_{\mcg(\bigsqcup_{j=a+1}^{l}\hat{S}_j)}[\rho_{\sigma}(H)]$ is virtually abelian of rank at most $l-a$. Given that $$C_{\mcg(\bigsqcup_{j=a+1}^{l}\hat{S}_j)}(\rho_{\sigma}(H))\subseteq N_{\mcg(\bigsqcup_{j=a+1}^{l}\hat{S}_j)}(\rho_{\sigma}(H))\subseteq N_{\mcg(\bigsqcup_{j=a+1}^{l}\hat{S}_j)}[\rho_{\sigma}(H)],$$
we conclude that the three groups above are finitely generated virtually abelian of rank at most $l-a$. We finish the proof of this step  by showing that they contain a finitely generated virtually abelian subgroup of rank exactly $l-a$

%To finish the proof of this step we show that the groups above contain a finitely generated virtually abelian subgroup of rank exactly $l-a$. 
The group $\prod_{j=a+1}^{l}C_{\mcg(\hat{S}_j,\Omega_j)}(\varphi_j\circ\rho_\sigma(H))$ is  virtually abelian of rank $l-a$. Indeed, the group $\varphi_j\circ\rho_\sigma(H)$ is infinite cyclic generated by a pseudo-Anosov mapping class for all $a\leq j\leq l$. It follows from \cref{normalizer:element:pseudoA}(1) that $C_{\mcg(S_j,\Omega_j)}(\varphi_j\circ\rho_\sigma(H))$ is an infinite virtually cyclic group. Finally, notice that there is a natural  monomorphism  $\prod_{j=a+1}^{l}C_{\mcg(\hat{S}_j,\Omega_j)}(\varphi_j\circ\rho_\sigma(H)) \hookrightarrow C_{\mcg(\bigsqcup_{j=a+1}^{l}\hat{S}_j)^{0}}(\rho_{\sigma}(H))$, and the latter is a subgroup of $C_{\mcg(\bigsqcup_{j=a+1}^{l}\hat{S}_j)}(\rho_{\sigma}(H))$.

\vskip 10 pt

\begin{step}\label{step:Q:finite:index}
The groups $Q_1$, $Q_2$, and $Q_3$ have finite index in $C_{\mcg(\hat{S}_{\sigma})}(\rho_{\sigma}(H))$, $N_{\mcg(\hat{S}_{\sigma})}(\rho_{\sigma}(H))$, and $N_{\mcg(\hat{S}_{\sigma})}[\rho_{\sigma}(H)]$ respectively.
\end{step}

We only prove the second statement for $Q_2:=\rho_{\sigma}(N_{\mcg(S)}(H))\subseteq N_{\mcg(\hat{S}_{\sigma})}(\rho_{\sigma}(H))$ as the proofs of the other two are analogous.

First we show that 
\begin{equation}\label{claim:step4}
    \mcg(\bigsqcup_{i=1}^{a}\hat{S}_i)^{0}\times C_{\mcg(\bigsqcup_{j=a+1}^{l}\hat{S}_j)^{0}}(\rho_{\sigma}(H))\subseteq Q_2.
\end{equation}
Let $f$ be a mapping class in the left hand side of \cref{claim:step4}. %$\mcg(\bigsqcup_{i=1}^{a}\hat{S}_i)^{0}\times C_{\mcg(\bigsqcup_{j=a+1}^{l}\hat{S}_j)^{0}}(\rho_{\sigma}(H))\subset \prod_{k=1}^l\mcg(\hat{S_k},\Omega_k)$.  
Following \Cref{subsection:pre-images} we can %take a representative diffeomorphism $F:\hat{S}_{\sigma}\to  \hat{S}_{\sigma}$  to 
construct a diffeomorphism $\tilde{f}:S\rightarrow S$ representing an element of $\mcg(S)_{\sigma}^{0}$  with image $f$ under the  the cutting homomorphism $\rho_\sigma$.  Such $\tilde{f}$ can be taken as the composite $\tilde{f}_1\tilde{f}_2\cdots\cdots\tilde{f}_l$
of diffeomorphisms of $S$ with disjoint support.  We will show that $\tilde{f}$ represents an element of $ N_{\mcg(S)}(H)$.
 
 Consider $h\in H$. Since $H\subseteq\mcg(S)_{\sigma}^{0}$, we can apply  \Cref{subsection:pre-images} to the mapping class $\rho_{\sigma}(h)$ and construct a  representative of the mapping class $h$ given by a composition    $\tilde{h}_1\tilde{h}_2\cdots\tilde{h}_l$ of diffeomorphims of $S$ where each $\tilde{h}_j$ has support in $S_j$ (considered as a subsurface of $S$). For $1\leq i\leq a$, we have  that $\rho_{\sigma}(h)$ acts trivially on $\hat{S}_i$, therefore we can take $\tilde{h}_i=id_S$.  Moreover, notice that each $\tilde{f}_j$ commutes with the corresponding $\tilde{h}_j$ for $a+1\leq j\leq l$, since $\tilde{f}$ is the lift of an element 
$f$ in $\mcg(\bigsqcup_{i=1}^{a}\hat{S}_i)^{0}\times C_{\mcg(\bigsqcup_{j=a+1}^{l}\hat{S}_j)^{0}}(\rho_{\sigma}(H))$, which is a subgroup of $C_{\mcg(\hat{S}_\sigma)}(\rho_{\sigma}(H))$ by Step 3 above.
Hence the following equalities hold 
\begin{equation*}
\begin{split}
 \tilde{f}(\tilde{h}_1\cdots\tilde{h}_l)\tilde{f}^{-1}&= \tilde{f_1}\tilde{h_1}\tilde{f_1}^{-1}\cdots \tilde{f_a}\tilde{h_a}\tilde{f_a}^{-1}\tilde{f}_{a+1}\tilde{h}_j\tilde{f}_{a+1}^{-1}\cdots\tilde{f}_l\tilde{h}_j\tilde{f}_l^{-1}\\
 &={id}_{S}\cdots {id}_{S} \tilde{h}_{a+1}\cdots \tilde{h}_l\\
 &=\tilde{h}_1\cdots\tilde{h}_l.
\end{split}
\end{equation*}
 %Consider $h\in H$. Since $H\subseteq\mcg(S)_{\sigma}^{0}$  the mapping class $\rho_{\sigma}(h)$ has a representative diffeomorphism that, for each $1\leq i\leq l$, restricts to a diffeomorphism $h_i\colon \hat{S}_i\to \hat{S}_i$ that fixes pointwise the punctures in $\Omega_i$. Following the construction in \Cref{subsection:pre-images} define the lifts  $\tilde{h}_i\colon S\to S$ for all $1\leq i\leq l$. Therefore the mapping class $h$ can be represented by the composite   $\tilde{h}_1\tilde{h}_2\cdots\tilde{h}_l$ of diffeomorphims of $S$. For $1\leq i\leq a$, we can take $h_i=id_{\hat{S}_i}$  since $\rho_{\sigma}(h)$ acts trivially on $\hat{S}_i$, then we take  $\tilde{h}_i=id_S$.  Moreover, notice that each $\tilde{f}_j$ commutes with the corresponding $\tilde{h}_j$ for $a+1\leq j\leq l$, since $\tilde{f}$ is the lift of an element $f$ in $\mcg(\bigsqcup_{i=1}^{a}\hat{S}_i)^{0}\times C_{\mcg(\bigsqcup_{j=a+1}^{l}\hat{S}_j)^{0}}(\rho_{\sigma}(H))$, which is a subgroup of $C_{\mcg(\hat{S}_\sigma)}(\rho_{\sigma}(H))$ by Step 3 above.
%Hence the following equalities hold 
%\begin{equation*}
%\begin{split}
 %\tilde{f}(\tilde{h}_1\cdots\tilde{h}_l)\tilde{f}^{-1}&= \tilde{f_1}\tilde{h_1}\tilde{f_1}^{-1}\cdots \tilde{f_a}\tilde{h_a}\tilde{f_a}^{-1}\tilde{f}_{a+1}\tilde{h}_j\tilde{f}_{a+1}^{-1}\cdots\tilde{f}_l\tilde{h}_j\tilde{f}_l^{-1}\\
 %&={id}_{S}\cdots {id}_{S} \tilde{h}_{a+1}\cdots \tilde{h}_l\\
 %&=\tilde{h}_1\cdots\tilde{h}_l.
%\end{split}
%\end{equation*}
Therefore the isotopy class of $\tilde{f}$ centralizes any $h\in H$, then it represents an element in $C_{\mcg(S)}(H)\subseteq N_{\mcg(S)}(H)$ and $f\in\rho_{\sigma}( N_{\mcg(S)}(H))$ as desired in \cref{claim:step4}.

From \Cref{step:normalizer:product} and  \cref{claim:step4} we have the following inclusions
\begin{equation*}
    \mcg(\bigsqcup_{i=1}^{a}\hat{S}_i)^{0}\times C_{\mcg(\bigsqcup_{j=a+1}^{l}\hat{S}_j)^{0}}(\rho_{\sigma}(H)) \subset Q_2\subset \mcg(\bigsqcup_{i=1}^{a}\hat{S}_i)\times N_{\mcg(\bigsqcup_{j=a+1}^{l}\hat{S}_j)}(\rho_{\sigma}(H)).
\end{equation*}  
In order to prove that $Q_2$ has finite index in the group in right hand side above, % $N_{\mcg(\hat{S}_{\sigma})}(\rho_{\sigma}(H))$ 
it is enough to show that  $C_{\mcg(\bigsqcup_{j=a+1}^{l}\hat{S}_j)^{0}}(\rho_{\sigma}(H))$ has finite index in $N_{\mcg(\bigsqcup_{j=a+1}^{l}\hat{S}_j)}(\rho_{\sigma}(H))$. From \cref{step:normalizer:product},  both $N_{\mcg(\bigsqcup_{j=a+1}^{l}\hat{S}_j)}(\rho_{\sigma}(H))$ and $C_{\mcg(\bigsqcup_{j=a+1}^{l}\hat{S}_j)}(\rho_{\sigma}(H))$ are virtually abelian of rank  $l-a$, therefore the latter has finite index in the former. The conclusion follows since $C_{\mcg(\bigsqcup_{j=a+1}^{l}\hat{S}_j)^{0}}(\rho_{\sigma}(H))$ has finite index in $C_{\mcg(\bigsqcup_{j=a+1}^{l}\hat{S}_j)}(\rho_{\sigma}(H))$.

\vskip 10 pt

The proof of this proposition follows directly from  \Cref{ses:normalizer:commensurator}, \Cref{step:normalizer:product}, and \Cref{step:Q:finite:index}.
\end{proof}

\subsection{Surfaces with empty boundary} Consider  $S$ to be a connected, orientable and closed surface with a finite number of punctures.  We use the auxiliary short exact sequences from the previous subsection to study properties of the centralizers, normalizers and commensurators of certain virtually
abelian subgroups of $\mcg(S)$.

%The following corollary will not be used for the rest of the paper, but we will recorded for future reference as we believe might be of independent interest.

The following corollary is a direct consequence of  \Cref{Normalizer:commensurator:free:abelian:subgroup}.
   
\begin{corollary}\label{cor:centra:norma} 
 Consider a closed orientable connected surface $S$ possibly with a finite number of punctures such that $\chi(S)<0$. Let $H$ be  a (free) abelian subgroup of $\mcg(S)[3]$. Then $C_{\mcg(S)}(H)$ has finite index in $N_{\mcg(S)}(H)$, and $N_{\mcg(S)}(H)$ has finite index in $N_{\mcg(S)}[H]$.
\end{corollary}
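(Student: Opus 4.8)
The plan is to derive \Cref{cor:centra:norma} essentially as a formal consequence of \Cref{Normalizer:commensurator:free:abelian:subgroup}, by reducing to the case handled there and then exploiting the three central extensions with a common kernel $\Z^n$.

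First I would reduce to the situation where $\sigma(H)\neq\varnothing$ and $H$ acts trivially or via a pseudo-Anosov mapping class on each component of $\hat S_\sigma$, which is exactly the hypothesis of \Cref{Normalizer:commensurator:free:abelian:subgroup}. If $H$ is finite, both $N_{\mcg(S)}(H)$ and $N_{\mcg(S)}[H]$ equal $C_{\mcg(S)}(H)$ up to finite index by elementary arguments (a finite group has finitely many subgroups and finitely many automorphisms), so the statement is trivial. If $H$ is infinite of rank $1$, the claim follows from \Cref{normalizer:element:pseudoA} in the irreducible case and from the rank-$1$ version of the auxiliary sequences in the reducible case; in any event \Cref{lemma:com:norm:rank1} already records that $N_{\mcg(S)}[H]$ is a normalizer of a finite-index subgroup, and the finite-index statements are immediate. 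So the substantive case is $H$ free abelian of rank $\geq 2$. By \Cref{Cor:action:components}, such an $H$ is reducible with nonempty canonical reduction system $\sigma=\sigma(H)$, and after reindexing the components of $\hat S_\sigma$, the action of $H$ on each component is trivial or pseudo-Anosov, so \Cref{Normalizer:commensurator:free:abelian:subgroup} applies verbatim.

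Next I would invoke the three central extensions from \Cref{Normalizer:commensurator:free:abelian:subgroup}(a), (b), (c): they fit into a commutative ladder
\[
\begin{tikzcd}
1 \arrow{r} & \Z^n \arrow{r} \arrow[equal]{d} & C_{\mcg(S)}(H) \arrow{r}{\rho_\sigma} \arrow[hook]{d} & Q_1 \arrow{r} \arrow[hook]{d} & 1\\
1 \arrow{r} & \Z^n \arrow{r} \arrow[equal]{d} & N_{\mcg(S)}(H) \arrow{r}{\rho_\sigma} \arrow[hook]{d} & Q_2 \arrow{r} \arrow[hook]{d} & 1\\
1 \arrow{r} & \Z^n \arrow{r} & N_{\mcg(S)}[H] \arrow{r}{\rho_\sigma} & Q_3 \arrow{r} & 1
\end{tikzcd}
\]
with $\ker\rho_\sigma=\Z^n$ the same in all three rows. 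The final sentence of \Cref{Normalizer:commensurator:free:abelian:subgroup} asserts $[Q_2:Q_1]<\infty$ and $[Q_3:Q_2]<\infty$. Since in a short exact sequence with fixed kernel the index of one total group in another equals the index of the corresponding quotients, i.e. $[N_{\mcg(S)}(H):C_{\mcg(S)}(H)]=[Q_2:Q_1]$ and $[N_{\mcg(S)}[H]:N_{\mcg(S)}(H)]=[Q_3:Q_2]$, both indices are finite. This is the whole argument.

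I do not expect a genuine obstacle here, since all the work has been pushed into \Cref{Normalizer:commensurator:free:abelian:subgroup}; the only point requiring a line of care is the reduction to the hypothesis of that proposition (handling $H$ not free abelian, or of rank $\le 1$, or acting non-pseudo-Anosov on some component before passing to a finite-index subgroup), together with checking that the finite-index conclusion is insensitive to replacing $H$ by a finite-index subgroup — which holds because $C$, $N(-)$ and $N[-]$ of commensurable subgroups agree up to finite index, and $N_G[H]$ depends only on the commensuration class. If one wants to also cover $H\subseteq\mcg(S)$ not contained in $\mcg(S)[3]$, pass to $H\cap\mcg(S)[3]$, which is finite-index in $H$, and use the same commensurability stability.
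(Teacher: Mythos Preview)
Your proposal is correct and follows essentially the same approach as the paper, which simply states that the corollary is ``a direct consequence of \Cref{Normalizer:commensurator:free:abelian:subgroup}''. You have correctly unpacked what that sentence means: the three short exact sequences share the kernel $\Z^n$, so the finite-index inclusions $Q_1\le Q_2\le Q_3$ lift to finite-index inclusions $C_{\mcg(S)}(H)\le N_{\mcg(S)}(H)\le N_{\mcg(S)}[H]$.

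Two minor remarks. First, you worry about possibly needing to pass to a finite-index subgroup of $H$ to ensure the action on each component is trivial or pseudo-Anosov; this is unnecessary, since \Cref{Cor:action:components} already guarantees this for any free abelian $H\le\mcg(S)[3]$ of rank $\ge 2$ (it is the canonical reduction system $\sigma(H)$ itself that forces this dichotomy). Second, your final paragraph about passing to $H\cap\mcg(S)[3]$ is superfluous, as the hypothesis already places $H$ inside $\mcg(S)[3]$. Your treatment of the rank $\le 1$ cases (trivial for $H$ finite, and via \Cref{normalizer:element:pseudoA} together with the rank-one analogues from \cite{JPT16,Nucinkis:Petrosyan} otherwise) is a useful addition that the paper leaves implicit.
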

 \begin{corollary}\label{cor:commensurator:fg}
Consider a closed orientable connected surface $S$ possibly with a finite number of punctures such that $\chi(S)<0$.  For every virtually abelian subgroup $H$ of $\mcg(S)$ of rank at least 2,  we have that  
  \begin{enumerate}[(a)]
      \item $N_{\mcg(S)}[H]$ is finitely generated.
      \item There is a finite index subgroup $H^{\prime}$ of $H$ such that $N_{\mcg(S)}(H^{\prime})$ is finitely generated.
  \end{enumerate}

 \end{corollary}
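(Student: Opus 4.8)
The plan is to deduce both statements from \Cref{Normalizer:commensurator:free:abelian:subgroup} applied to a conveniently chosen finite index subgroup of $H$, and then observe that finite generation propagates through the relevant short exact sequences. First I would pass to a finite index free abelian subgroup lying in the torsion-free congruence subgroup: since $H$ is virtually abelian of rank $k\ge 2$ it contains a finite index subgroup isomorphic to $\Z^k$, and intersecting this subgroup with $\mcg(S)[3]$ yields a subgroup $H'\le\mcg(S)[3]$ that is free abelian, of finite index in $H$, and of rank $k\ge 2$ (being a finite index subgroup of $\Z^k$). By \Cref{Cor:action:components}, $H'$ is reducible with nonempty canonical reduction system $\sigma=\sigma(H')=\{[\alpha_1],\dots,[\alpha_n]\}$, and $H'$ acts either trivially or via a pseudo-Anosov mapping class on each component of $\hat S_\sigma$; hence $H'$ satisfies the hypotheses of \Cref{Normalizer:commensurator:free:abelian:subgroup}. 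Note also that $n$ is finite, so $\Z^n$ is finitely generated.

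For part (a), since the commensurator depends only on the commensuration class we have $N_{\mcg(S)}[H]=N_{\mcg(S)}[H']$, and \Cref{Normalizer:commensurator:free:abelian:subgroup}(c) provides a central extension
\[
1\to \Z^n \to N_{\mcg(S)}[H'] \xrightarrow{\ \rho_\sigma\ } Q_3 \to 1,
\]
where $Q_3$ has finite index in $\mcg(\bigsqcup_{i=1}^{a}\hat S_i)\times A_3$ with $A_3$ finitely generated virtually abelian. The mapping class group of a compact surface with finitely many punctures is finitely generated, and hence so is $\mcg(\bigsqcup_{i=1}^{a}\hat S_i)$ (it is a finite extension of the product of the mapping class groups of the components); therefore $\mcg(\bigsqcup_{i=1}^{a}\hat S_i)\times A_3$ is finitely generated, and its finite index subgroup $Q_3$ is finitely generated as well. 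Since an extension of a finitely generated group by a finitely generated group is finitely generated, the displayed sequence shows that $N_{\mcg(S)}[H]=N_{\mcg(S)}[H']$ is finitely generated.

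For part (b) I would run the identical argument with the normalizer in place of the commensurator, using \Cref{Normalizer:commensurator:free:abelian:subgroup}(b): the short exact sequence $1\to\Z^n\to N_{\mcg(S)}(H')\xrightarrow{\rho_\sigma} Q_2\to 1$ with $Q_2$ of finite index in $\mcg(\bigsqcup_{i=1}^{a}\hat S_i)\times A_2$ exhibits $N_{\mcg(S)}(H')$ as an extension of finitely generated groups, hence finitely generated, and this $H'$ is the required finite index subgroup. There is no genuine obstacle here beyond bookkeeping, as all the substance is contained in \Cref{Normalizer:commensurator:free:abelian:subgroup}; the only points that require care are (i) checking that the chosen $H'$ lies in $\mcg(S)[3]$ and still has rank $\ge 2$, so that \Cref{Cor:action:components} and \Cref{Normalizer:commensurator:free:abelian:subgroup} genuinely apply, and (ii) invoking the standard finite generation of mapping class groups of finite-type surfaces together with the facts that finite generation passes to finite index subgroups and to extensions.
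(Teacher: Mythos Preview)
Your proof is correct and follows essentially the same approach as the paper: pass to a finite index free abelian subgroup $H'\le \mcg(S)[3]$, apply \Cref{Normalizer:commensurator:free:abelian:subgroup}(c) (resp.\ (b)), and conclude by noting that $Q_3$ (resp.\ $Q_2$) is finitely generated as a finite index subgroup of $\mcg(\bigsqcup_{i=1}^{a}\hat S_i)\times A_i$, whence the middle term of the short exact sequence is finitely generated. If anything, you are slightly more explicit than the paper in checking via \Cref{Cor:action:components} that the hypothesis of \Cref{Normalizer:commensurator:free:abelian:subgroup} on the action on each component is satisfied.
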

\begin{proof}
    First we prove item $(a)$.  By \cref{maximal:rank:abelian:subgroup} the group $H$ is finitely generated.  Let  $H'$ be a finite index (free) abelian subgroup of $H\cap\mcg(S)[3]$, in particular $H'$ is a finite index subgroup of $H$. It follows that  $N_{\mcg(S)}[H]=N_{\mcg(S)}[H^{'}]$. Now, by  \Cref{Normalizer:commensurator:free:abelian:subgroup} (c), it is enough to prove that $Q_3$ is finitely generated. Since $Q_3$ is a finite index subgroup of $\mcg(\bigsqcup_{i=1}^{a}\hat{S}_i)\times A_3$, and clearly $A_3$ and $\mcg(\bigsqcup_{i=1}^{a}\hat{S}_i)$ are finitely generated, it follows that $Q_3$ is finitely generated.
    
Using \Cref{Normalizer:commensurator:free:abelian:subgroup} (b) and choosing $H'$ as before, the proof of part (b) is similar to the proof of the previous item.
\end{proof}

The following definition is borrowed from \cite[Definition~2.8]{tomasz}.
 \begin{definition}(Condition C) Let $n$ be a natural number.
We say that a group $G$ satisfies condition $\mathrm{C}_n$ if for every free abelian subgroup $H$ of $G$ of rank $n$, and for all $K\subset N_{G}[H]$ finitely generated, there is $H^{\prime}$  commensurable with $H$ such that $\langle H, K\rangle \subset N_{G}(H^{\prime})$. Whenever $G$ satisfies condition  $\mathrm{C}_n$, for all $n$, we say $G$ satisfies condition $\mathrm{C}$. 
\end{definition}

\begin{lemma}\label{lemma:conditionC:fg}
 Let $G$ be a group and let $H$ be a free abelian subgroup. Assume that $G$ satisfies condition C, and that $N_G[H]$ is finitely generated. Then there is a finite index subgroup $H'$ of $H$ such that $N_G[H]=N_G(H')$.
\end{lemma}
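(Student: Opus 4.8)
The plan is to first invoke condition $\mathrm{C}$ to realize $N_G[H]$ as the normalizer of \emph{some} subgroup commensurable with $H$, and then to pass to a suitable ``core'' in order to force that subgroup to sit inside $H$ with finite index. Since $H$ is free abelian, say of rank $n$, and $K:=N_G[H]$ is finitely generated by hypothesis with $K\subseteq N_G[H]$, condition $\mathrm{C}_n$ applies and produces a subgroup $H_1$ commensurable with $H$ such that $\langle H, N_G[H]\rangle\subseteq N_G(H_1)$. As $H\subseteq N_G[H]$, this says precisely that $N_G[H]\subseteq N_G(H_1)$.

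Next I would carry out the core construction. Put $A=H\cap H_1$; since $H$ and $H_1$ are commensurable we have $[H:A]<\infty$ and $[H_1:A]<\infty$, so $A$ is finitely generated (being of finite index in the finitely generated group $H$), and hence so is $H_1$. For every $g\in N_G[H]\subseteq N_G(H_1)$ we get $gAg^{-1}\subseteq H_1$ with $[H_1:gAg^{-1}]=[H_1:A]$, so the conjugates $\{gAg^{-1}:g\in N_G[H]\}$ are all subgroups of $H_1$ of one fixed finite index. Because a finitely generated group has only finitely many subgroups of a given finite index, this family is finite, and therefore $H':=\bigcap_{g\in N_G[H]}gAg^{-1}$ is a \emph{finite} intersection of finite index subgroups of $H_1$. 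Taking $g=1$ gives $H'\subseteq A\subseteq H$, whence $[H:H']=[H:A]\,[A:H']\leq [H:A]\,[H_1:H']<\infty$, so $H'$ is a finite index subgroup of $H$. Finally $H'$ is invariant under conjugation by $N_G[H]$: for $h\in N_G[H]$, reindexing $g\mapsto hg$ (which permutes $N_G[H]$) yields $hH'h^{-1}=\bigcap_{g}(hg)A(hg)^{-1}=H'$; thus $N_G[H]\subseteq N_G(H')$.

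For the reverse inclusion, $H'$ has finite index in $H$, hence is commensurable with $H$, so $N_G(H')\subseteq N_G[H']=N_G[H]$, using that the commensurator depends only on the commensuration class. Combining the two inclusions gives $N_G[H]=N_G(H')$ with $H'$ of finite index in $H$, as required. The only genuine content beyond bookkeeping is the core step that upgrades the merely commensurable subgroup furnished by condition $\mathrm{C}$ to an honest finite index subgroup of $H$; the point making it work is that $H_1$ is finitely generated, so that the conjugates of $A$ form a finite set and their intersection remains of finite index. This is the step I would be most careful about, as it is where both finite generation hypotheses (on $N_G[H]$, through condition $\mathrm{C}$, and on $H$, through its finite rank) are actually used.
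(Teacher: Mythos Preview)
Your argument is correct and begins exactly as the paper does: apply Condition~C with $K=N_G[H]$ to obtain a subgroup $H_1$ commensurable with $H$ satisfying $N_G[H]=\langle H,N_G[H]\rangle\subseteq N_G(H_1)$, and then close up via $N_G(H_1)\subseteq N_G[H_1]=N_G[H]$. The paper's proof stops essentially there, asserting that Condition~C already produces $H'\leq H$ of \emph{finite index}; but the definition of Condition~C as stated in the paper only guarantees that $H'$ is \emph{commensurable} with $H$. Your core construction---intersecting the $N_G[H]$-conjugates of $A=H\cap H_1$ inside the finitely generated group $H_1$, using that a finitely generated group has only finitely many subgroups of a given index---is exactly the device needed to replace $H_1$ by an honest finite index subgroup of $H$ that is still normalized by $N_G[H]$. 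So your proof is, if anything, more careful than the paper's on this point; the extra paragraph you flag as ``the step I would be most careful about'' is indeed the nontrivial content, and it is handled correctly.
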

\begin{proof}
 Applying Condition C to $G$ and $K=N_G[H]$, we get that there is a subgroup $H'\leq H$ of finite index such that $\langle H, N_{G}[H] \rangle\subseteq N_{G}(H')$. On the other hand, $\langle H, N_G[H] \rangle=N_{G}[H]= N_{G}[H']$, hence $N_{G}[H']\subseteq N_{G}(H')$. The other inclusion $N_{G}(H') \subseteq N_{G}[H']$ is always true. 
\end{proof}

\begin{proposition}\label{condition:c} Let $S$ be a closed orientable connected surface possibly with a finite number of punctures.
The mapping class group $\mcg(S)$ satisfies condition $C$.
\end{proposition}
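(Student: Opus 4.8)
We check condition $\mathrm{C}_n$ for every $n\ge 0$. Fix a free abelian $H\le\mcg(S)$ of rank $n$ and a finitely generated $K\le N_{\mcg(S)}[H]$. Replacing $K$ by $\langle H,K\rangle$ — still finitely generated by \Cref{maximal:rank:abelian:subgroup}, still inside $N_{\mcg(S)}[H]$, and now containing $H$ — and then replacing $H$ by the finite index subgroup $H\cap\mcg(S)[3]$, which has the same commensurator, it suffices to find $H'$ commensurable with this smaller $H$ such that $K\subseteq N_{\mcg(S)}(H')$; then $\langle H,K\rangle=K\subseteq N_{\mcg(S)}(H')$. So we may assume $H\le\mcg(S)[3]$ and $H\subseteq K$. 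If $n=0$ take $H'=1$; if $n=1$, then $H$ is infinite cyclic and \Cref{lemma:com:norm:rank1} gives a finite index cyclic $H'\le H$ with $N_{\mcg(S)}[H]=N_{\mcg(S)}(H')$, so $K\subseteq N_{\mcg(S)}(H')$. When $\chi(S)\ge0$ only these cases occur, so from now on $\chi(S)<0$ and $n\ge2$.

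Since $H\le\mcg(S)[3]$ is free abelian of rank $\ge2$, \Cref{Cor:action:components} applies: $H$ is reducible, its canonical reduction system $\sigma=\sigma(H)=\{[\alpha_1],\dots,[\alpha_m]\}$ is nonempty, and $H$ acts trivially or as a pseudo-Anosov on each component of $\hat S_\sigma$. Hence \Cref{Normalizer:commensurator:free:abelian:subgroup}(c) gives a central extension $1\to\Z^m\to N_{\mcg(S)}[H]\xrightarrow{\ \rho_\sigma\ }Q_3\to1$ with $\Z^m=\ker\rho_\sigma$ the group of Dehn twists about the $\alpha_i$, and $Q_3$ of finite index in $\mcg(\hat S)\times A_3$, where $\hat S=\bigsqcup_{i=1}^a\hat S_i$ and $A_3$ is a finitely generated virtually abelian group with $\bar H:=\rho_\sigma(H)\subseteq A_3$. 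By the proof of that proposition, $A_3$ is exactly the commensurator of $\bar H$ in $\mcg(\bigsqcup_{j=a+1}^l\hat S_j)$; since $A_3$ is virtually polycyclic, \cite[Theorem~10]{MR4138929} yields a normal subgroup $\bar H''\trianglelefteq A_3$ commensurable with $\bar H$. Then $\{1\}\times\bar H''$ is normal in $\mcg(\hat S)\times A_3$, hence is normalized by $\bar K:=\rho_\sigma(K)$, and therefore $K$ normalizes $\widetilde H:=\rho_\sigma^{-1}(\{1\}\times\bar H'')\cap N_{\mcg(S)}[H]$. From the resulting central extension $1\to\Z^m\to\widetilde H\to\bar H_0''\to1$ with $\bar H_0''$ commensurable with $\bar H$, we see that $\widehat H:=H\cap\widetilde H=(\rho_\sigma|_H)^{-1}(\bar H\cap\bar H_0'')$ is a finite index subgroup of $H$.

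The remaining — and main — difficulty, which does not arise in the rank one situation, is the ``Dehn twist discrepancy'': $\widetilde H$ contains the rank $m$ lattice $\Z^m$ while $H$ may have smaller rank, so $\widetilde H$ itself need not be commensurable with $H$ and one cannot simply take $H'=\widetilde H$. First, $\widetilde H$ is virtually abelian: the subgroup $N:=\langle\widehat H,\Z^m\rangle$ is abelian (because $\Z^m$ is central in $N_{\mcg(S)}[H]$ and $\widehat H$ is abelian) and coincides with the full $\rho_\sigma$-preimage in $\widetilde H$ of $\bar H\cap\bar H_0''$, hence has finite index $[\bar H_0'':\bar H\cap\bar H_0'']$ in $\widetilde H$. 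As $\widetilde H$ is finitely generated it has only finitely many subgroups of that index, so the $K$-conjugation orbit of $N$ is finite; intersecting this orbit and then intersecting with $\mcg(S)[3]$ produces a $K$-invariant, finite index, torsion-free abelian subgroup $E\cong\Z^t$ of $\widetilde H$. Set $\widehat H':=\widehat H\cap E=H\cap E$, a finite index subgroup of $H$ of rank $n$ contained in $E$, and let $W\subseteq E\otimes_{\Z}\Q$ be its $n$-dimensional rational span. For $k\in K$, since $K$ normalizes both $\widetilde H$ and $E$ we have $k\widehat H'k^{-1}=k\widehat Hk^{-1}\cap E=kHk^{-1}\cap E$, which is commensurable with $H\cap E=\widehat H'$ because $kHk^{-1}$ is commensurable with $H$ (as $k\in N_{\mcg(S)}[H]$); commensurable subgroups of the free abelian group $E$ have the same rational span, so conjugation by $k$ preserves $W$. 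Hence $K$ normalizes $H':=E\cap W$, a rank $n$ subgroup of $E$ containing $\widehat H'$ with finite index and therefore commensurable with $H$. Thus $\langle H,K\rangle=K\subseteq N_{\mcg(S)}(H')$, as required.

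The step I expect to be the crux is the construction of $E$: obtaining a $K$-invariant, finitely generated, torsion-free \emph{abelian} subgroup of $N_{\mcg(S)}[H]$ that still ``covers'' a finite index subgroup of $H$. Everything before it is a repackaging of \Cref{Normalizer:commensurator:free:abelian:subgroup} together with the known statement \cite[Theorem~10]{MR4138929} for virtually polycyclic groups, and everything after it is elementary linear algebra over $\Q$.
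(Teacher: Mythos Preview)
Your argument is correct, but it takes a substantially different and much longer route than the paper's proof. The paper dispatches the proposition in two lines by invoking an external subgroup--separability result: every solvable subgroup of $\mcg(S)$ is separable \cite[Theorem~1.1]{MR2271769}, and then \cite[Corollary~9]{MR4138929} immediately produces a finite index $H'\le H$ that is normal in $\langle H,K\rangle$. Your proof avoids separability entirely and instead leans on the internal structure theory already developed in \Cref{Normalizer:commensurator:free:abelian:subgroup}: you pass to the cutting homomorphism, use the virtually polycyclic factor $A_3$ to get a normal $\bar H''$ via \cite[Theorem~10]{MR4138929}, pull back, manufacture a $K$--invariant free abelian ambient group $E$, and finish with rational linear algebra. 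This is self--contained relative to the paper's own machinery and arguably gives more geometric insight into \emph{why} condition $C$ holds, but the price is considerable length and a dependence on a detail from the \emph{proof} of \Cref{Normalizer:commensurator:free:abelian:subgroup} (namely that $A_3$ equals the commensurator of $\rho_\sigma(H)$, so that $N_{A_3}[\bar H]=A_3$). A minor further difference: the paper's $H'$ is a finite index subgroup of $H$, whereas your $H'=E\cap W$ is only commensurable with $H$; this is harmless for condition~$C$ as stated but worth noting since \Cref{norrmalizer:commensurator:equal} exploits the stronger conclusion.
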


\begin{proof}
Let $H$ be a free abelian group of rank $n$, and let $K\subset N_{G}[H]$ finitely generated. In   \cite[Theorem 1.1.]{MR2271769} it is proved that every solvable subgroup of $\mcg(S)$ is separable, hence $H$ is separable in $\mcg(S)$. Now by \cite[Corollary 9]{MR4138929} there is a finite index subgroup $H^{\prime}$ of $H$ that is normal in $\langle H, K\rangle$. 
\end{proof}

 \begin{theorem}\label{norrmalizer:commensurator:equal}
 Let $S$ be a closed orientable connected surface possibly with a finite number of punctures. Assume that $S$ has negative Euler characteristic. Let $H$ be a virtually abelian  subgroup of $\mcg(S)$ of rank $k\ge 2$. Then there is a subgroup $L\leq H$ of finite index such that $L\leq \mcg(S)[3]$ and $N_{\mcg(S)}[L]=N_{\mcg(S)}(L)$.
\end{theorem}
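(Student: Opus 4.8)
The plan is to reduce the statement to the combination of \Cref{cor:commensurator:fg}, \Cref{condition:c} and \Cref{lemma:conditionC:fg}. First I would produce a convenient replacement for $H$ inside the chosen congruence subgroup. Since $H$ is virtually abelian of rank $k$, it contains a subgroup $A\cong\Z^k$ of finite index; then $L_0:=A\cap\mcg(S)[3]$ is a finite index subgroup of $\Z^k$, hence itself isomorphic to $\Z^k$, and it is of finite index in $H$ because $\mcg(S)[3]$ is of finite index in $\mcg(S)$. Thus $L_0$ is a free abelian subgroup of $\mcg(S)[3]$ of rank $k\ge 2$ which is commensurable with $H$, so in particular $N_{\mcg(S)}[L_0]=N_{\mcg(S)}[H]$.

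Next I would invoke the two auxiliary inputs. By \Cref{cor:commensurator:fg}(a), applied to the free abelian subgroup $L_0$ of rank $k\ge 2$, the commensurator $N_{\mcg(S)}[L_0]$ is finitely generated. By \Cref{condition:c}, the group $\mcg(S)$ satisfies condition $\mathrm{C}$. Hence \Cref{lemma:conditionC:fg}, applied with $G=\mcg(S)$ and with $L_0$ in the role of $H$, yields a finite index subgroup $L\le L_0$ with $N_{\mcg(S)}[L_0]=N_{\mcg(S)}(L)$.

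Finally I would check that this $L$ has all the required properties: it is of finite index in $H$, being of finite index in $L_0$ which is of finite index in $H$; it is contained in $\mcg(S)[3]$ since $L\le L_0\le\mcg(S)[3]$; and because the commensurator depends only on the commensuration class, $N_{\mcg(S)}[L]=N_{\mcg(S)}[L_0]=N_{\mcg(S)}(L)$, which is exactly the asserted equality. The substantive content is hidden in \Cref{cor:commensurator:fg} (which rests on the short exact sequences of \Cref{Normalizer:commensurator:free:abelian:subgroup}) and in \Cref{condition:c} (which rests on separability of solvable subgroups of $\mcg(S)$ together with the polycyclic-type argument of \cite{MR4138929}); the present proof is just the bookkeeping that glues them together, so the only point requiring care is ensuring that the final subgroup $L$ is simultaneously free abelian, of finite index in $H$, and inside $\mcg(S)[3]$.
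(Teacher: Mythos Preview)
Your proposal is correct and follows essentially the same approach as the paper's own proof: pass to a finite index free abelian subgroup inside $\mcg(S)[3]$, then combine \Cref{cor:commensurator:fg}, \Cref{condition:c}, and \Cref{lemma:conditionC:fg}. Your write-up is in fact more careful than the paper's two-line argument, explicitly verifying that the resulting $L$ remains in $\mcg(S)[3]$ and has finite index in the original $H$.
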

\begin{proof}  We take  $H^{\prime}$ a finite index (free) abelian subgroup of $H\cap \mcg(S)[3]$, thus $N_G[H]=N_G[H']$. Now the proof follows directly from \Cref{cor:commensurator:fg}, \Cref{condition:c}, and \Cref{lemma:conditionC:fg} applied to $H'$.
\end{proof}

\begin{proposition}\label{vcd:weyl:and:normalizer}
Let $S$ be a closed orientable connected surface possibly with a finite number of punctures. Assume that $S$ has negative Euler characteristic. For every (free) abelian  subgroup $H$ of $\mcg(S)[3]$ of rank $k\ge 1$ we have 
\begin{enumerate}[(a)]
    \item $N_{\mcg(S)}(H)$ and  $W_{\mcg(S)}(H)$ are virtual duality groups.
    \item $\vcd(N_{\mcg(S)}(H))= \vcd(W_{\mcg(S)}(H))+k$.
\end{enumerate}
\end{proposition}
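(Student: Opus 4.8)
The plan is to read off both statements from the central extension of \Cref{Normalizer:commensurator:free:abelian:subgroup}(b), combined with two standard inputs: that the mapping class group of a surface of finite type is a virtual duality group (Harer \cite{Ha86}), and the Bieri--Eckmann homological algebra of duality groups (free abelian groups are duality groups of dimension equal to their rank; finitely generated virtually abelian groups are virtual duality groups of dimension their rank; products and extensions of (virtual) duality groups are (virtual) duality groups whose dimension is the sum of the dimensions; and being a virtual duality group, as well as the value of $\vcd$, is invariant under passing to a finite-index subgroup or overgroup). Write $N=N_{\mcg(S)}(H)$ and $W=W_{\mcg(S)}(H)=N/H$; after conjugating, let $\sigma=\sigma(H)=\{[\alpha_1],\dots,[\alpha_n]\}$ be the canonical reduction system.

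I would first dispatch the case $k=1$. If $H=\langle f\rangle$ with $f$ pseudo-Anosov, then $N$ is a finite extension of an infinite cyclic group by \Cref{normalizer:element:pseudoA}(1), hence a virtual duality group of dimension $1$, while $W$ is finite, hence a virtual duality group of dimension $0$; thus $\vcd(N)=\vcd(W)+1$. If $f$ is reducible, the analogues of \Cref{Normalizer:commensurator:free:abelian:subgroup}(a)--(b) for infinite cyclic reducible subgroups are \cite[Proposition 4.12]{JPT16} and \cite[Proposition 5.6]{Nucinkis:Petrosyan}, and the argument below goes through with these substituted for \Cref{Normalizer:commensurator:free:abelian:subgroup}.

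For $k\ge 2$, \Cref{Cor:action:components} guarantees the hypotheses of \Cref{Normalizer:commensurator:free:abelian:subgroup}, giving a central extension $1\to\Z^n\to N\xrightarrow{\rho_\sigma}Q_2\to 1$ with $Q_2$ of finite index in $\mcg(\bigsqcup_{i=1}^{a}\hat S_i)\times A_2$, where $A_2$ is finitely generated virtually abelian of rank $l-a$ and $\rho_\sigma(H)\subseteq A_2$. Since each $\mcg(\hat S_i,\Omega_i)$ is commensurable with the virtual duality group $\mcg(\hat S_i)$, the group $\mcg(\bigsqcup_{i=1}^{a}\hat S_i)$ is a virtual duality group of dimension $d_0:=\vcd(\mcg(\bigsqcup_{i=1}^{a}\hat S_i))$, so $\mcg(\bigsqcup_{i=1}^{a}\hat S_i)\times A_2$ and hence $Q_2$ are virtual duality groups of dimension $d_0+(l-a)$; therefore $N$ is a virtual duality group with $\vcd(N)=n+d_0+(l-a)$. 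Now put $n'=\rank(H\cap\Z^n)$, so that $\rho_\sigma(H)\cong\Z^{k-n'}$ with $k-n'\le l-a$. Since $H\Z^n$ is normal in $N$, quotienting by it gives $N/(H\Z^n)\cong Q_2/\rho_\sigma(H)$, and because $\rho_\sigma(H)$ lies in, and is normalized by, the virtually abelian factor, the group $Q_2/\rho_\sigma(H)$ has finite index in $\mcg(\bigsqcup_{i=1}^{a}\hat S_i)\times(A_2/\rho_\sigma(H))$ with $A_2/\rho_\sigma(H)$ virtually $\Z^{(l-a)-(k-n')}$; hence $Q_2/\rho_\sigma(H)$ is a virtual duality group of dimension $d_0+(l-a)-(k-n')$. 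As $H\Z^n/H\cong\Z^n/(H\cap\Z^n)$ is virtually $\Z^{n-n'}$, the extension $1\to\Z^n/(H\cap\Z^n)\to W\to Q_2/\rho_\sigma(H)\to 1$ exhibits $W$ as a virtual duality group with $\vcd(W)=(n-n')+d_0+(l-a)-(k-n')=n+d_0+(l-a)-k=\vcd(N)-k$. This proves (a) and (b).

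I expect the main obstacle to be careful bookkeeping rather than any deep idea: one must verify that quotienting by $\rho_\sigma(H)$ only shrinks the virtually abelian factor of $Q_2$ (which is why one keeps the explicit product description and uses that $\rho_\sigma(H)\subseteq A_2$ with $A_2$ normalizing it), and track the four ranks $n$, $n'=\rank(H\cap\Z^n)$, $k$ and $l-a$ so that the final dimension difference collapses to exactly $k$. A secondary point is that each appeal to closure of (virtual) duality groups under products, extensions and passage to finite index must be made explicit, since these are precisely where dimension additivity is used; in particular one should pass to genuine duality subgroups of finite index before invoking the Bieri--Eckmann extension theorem.
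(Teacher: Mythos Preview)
Your proof is correct and follows essentially the same route as the paper: both invoke the central extension of \Cref{Normalizer:commensurator:free:abelian:subgroup}(b) (with \cite[Proposition~5.6]{Nucinkis:Petrosyan} in rank~1), pass to the induced extension for $W$, and appeal to Harer's theorem together with Bieri--Eckmann to obtain the virtual duality property. The only cosmetic difference is that the paper first shows $W$ is a virtual duality group and then reads off both part~(a) for $N$ and part~(b) directly from the extension $1\to H\cong\Z^k\to N\to W\to 1$ via \cite[Theorem~3.5]{BE73}, whereas you compute $\vcd(N)$ and $\vcd(W)$ separately and compare.
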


\begin{proof}
First we prove that $W_{\mcg(S)}(H)$ is a virtual duality group. 
Using \cref{Normalizer:commensurator:free:abelian:subgroup}, when the rank of $H$ is at least 2, and \cite[Proposition 5.6]{Nucinkis:Petrosyan}, when $H$ has rank 1, we have the following short exact sequence 
\begin{equation*}%\label{s.e.s:1}
1\to \Z^n \to  N_{\mcg(S)}(H)\xrightarrow[]{\rho_{\sigma}} Q\to 1
\end{equation*}
where  $Q$ is a finite index subgroup of $\mcg(\bigsqcup_{i=1}^{a}\hat{S_i})\times A$, and $A$  is a finitely generated virtually abelian group. From the previous short exact sequence we get the following
\begin{equation*}%\label{s.e.s:weil:duality}
1\to  \Z^n/(H\cap\Z^n) \to  W_{\mcg(S)}(H)\xrightarrow[]{\rho_{\sigma}} Q/\rho_{\sigma}(H)\to 1.\end{equation*}
By \cite[Theorem 3.5]{BE73}, to prove that $W_{\mcg(S)}(H)$ is a virtual duality group it is enough to prove that $Q/\rho_{\sigma}(H)$ is a virtual duality group.
Now by \Cref{Normalizer:commensurator:free:abelian:subgroup} we have that 
$Q/\rho_{\sigma}(H)$ is a finite index subgroup of $\mcg(\bigsqcup_{i=1}^{a}\hat{S_i})\times A/\rho_{\sigma}(L)$.  From \cite[Proposition 10.2]{Br94} it follows that finite index subgroups of virtual duality groups are virtual duality groups. Then to prove that $Q/\rho_{\sigma}(H)$  is a virtual duality group it is enough to prove that $\mcg(\bigsqcup_{i=1}^{a}\hat{S_i})\times A/\rho_{\sigma}(L)$  is a virtual duality group. From   \cite[Theorem 3.5]{BE73} it follows  that the finite product of  virtual duality groups is a virtual duality group. Then,   to show  that   $\mcg(\bigsqcup_{i=1}^{a}\hat{S}_i)\times A/\rho_{\sigma}(L)$ is a virtual duality group, it is enough to prove that the factors are virtual duality groups.  The factor $\mcg(\bigsqcup_{i=1}^{a}\hat{S_i})$ has the following finite index subgroup  $\mcg(\bigsqcup_{i=1}^{a}\hat{S_i})^{0}\cong \prod_{i=1}^{a}\mcg(\hat{S}_j)$, thus it is a virtual duality group. The second factor $A/\rho_{\sigma}(L)$ is a finitely generated virtually abelian group as it is a quotient of a finitely generated virtually abelian group. Therefore  $A/\rho_{\sigma}(L)$ is a virtual duality group. 

Since $W_{\mcg(S)}(H)=N_{\mcg(S)}(H)/H$ and $H$ is a free abelian, it follows from \cite[Theorem 3.5]{BE73} that $N_{\mcg(S)}(H)$ is a virtual duality group and item (b) holds.

 %It follows from the next short exact sequence \cref{s.e.s:norma:weil:1} and  \cite[Theorem 3.5]{BE73} that $N_{\mcg(S)}(H)$ is a virtually duality group.

%\begin{equation}\label{s.e.s:norma:weil:1}
 %   1\to H\to N_{\mcg(S)}(H)\to W_{\mcg(S)}(H)\to 1
%\end{equation}
%The proof of item $(b)$  follows directly from  item (a), \cref{s.e.s:norma:weil:1} and   \cite[Theorem 3.5]{BE73}.
\end{proof}

\subsection{Surfaces with non-empty boundary}\label{NON-EMPTY}

Let $S$ be a connected surface, possibly with a finite number of punctures, with non-empty boundary. In this section we consider $\hat{S}$ to be the closed surface with punctures obtained from $S$ by `capping' the boundary components with once-punctured disks.  The inclusion $S\hookrightarrow \hat{S}$ induces the {\it capping homomorphism}  \begin{equation*}\theta_S\colon\mcg(S)\rightarrow \mcg(\hat{S},\Omega)\subset\mcg(\hat{S}),\end{equation*}
where $\mcg(\hat{S},\Omega)$ is the subgroup of $\mcg(\hat{S})$ consisting of elements that fix pointwise the set $\Omega$  of punctures of $\hat{S}$ that come from capping the boundary components of $S$.
The homomorphism $\theta_S$ has kernel $\mathbb{Z}^b$ generated by the Dehn twists along curves parallel to the $b>0$ boundary components of $S$.  Since these Dehn twists are central in $\mcg(\hat{S},\Omega)$, we have a central extension $$1\to \Z^b\to \mcg(S)\xrightarrow[]{\theta_S}\mcg(\hat{S},\Omega)\to 1.$$
This central extension is used  to prove the results of interest for surfaces with non-empty boundary from the results obtained in the previous section for surfaces with empty boundary.

\begin{remark}\label{subsection:pre-images capping}[Pre-images of the capping homomorphism] For $f\in \mcg(\hat S,\Omega)$, we denote by  $\tilde{f}\in \mcg(S)$ a pre-image under $\theta_S$ constructed as follows.  Since $f$ fixes $\Omega$ pointwise, for each $x\in \Omega$ we can take a tubular neighborhood $U_x$ such that $\hat{S}-\sqcup_{x\in \Omega} U_x =S$ and there is $F\in f$ that restricts to the identity in $U_x$ for all $x\in \Omega$. We take $\tilde{f}\in\mcg(S)$ to be the isotopy class of $F|_S$; by construction it satisfies that $\theta_S(\tilde{f})=f$. Note that, provided $h\in\mcg(S)$, the element $\widetilde{\theta_S(h)}$ can be chosen to be equal to $h$.
\end{remark}

\begin{proposition}\label{normalizer:boundary}
Let $S$ be a connected compact orientable surface with $b>0$ boundary components and possibly a finite number of punctures.  Let $H$ be a subgroup of $\mcg(S)$, then 
we have the central extension
$$1\to \Z^b\to N_{\mcg(S)}(H)\to N_{\mcg(\hat S,\Omega)}(\theta_S(H))\to 1.$$
\end{proposition}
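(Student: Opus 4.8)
The plan is to produce the sequence by restricting the capping homomorphism $\theta_S\colon\mcg(S)\to\mcg(\hat S,\Omega)$ to $N_{\mcg(S)}(H)$. First I would note that $\ker\theta_S=\Z^b$ is central in $\mcg(S)$: its generators $T_{\delta_i}$ are Dehn twists about curves parallel to boundary components, and since every mapping class of $S$ fixes $\partial S$ pointwise it carries each $\delta_i$ to a curve isotopic to $\delta_i$, so $\varphi T_{\delta_i}\varphi^{-1}=T_{\delta_i}$ for all $\varphi\in\mcg(S)$. In particular $\Z^b\subseteq C_{\mcg(S)}(H)\subseteq N_{\mcg(S)}(H)$, so $\Z^b$ is a central subgroup of $N_{\mcg(S)}(H)$, and restricting $\theta_S$ gives a central extension
\[
1\to\Z^b\to N_{\mcg(S)}(H)\xrightarrow{\ \theta_S\ }\theta_S\bigl(N_{\mcg(S)}(H)\bigr)\to 1 .
\]
It then remains to identify $\theta_S(N_{\mcg(S)}(H))$ with $N_{\mcg(\hat S,\Omega)}(\theta_S(H))$.

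The inclusion $\theta_S(N_{\mcg(S)}(H))\subseteq N_{\mcg(\hat S,\Omega)}(\theta_S(H))$ is immediate from the general fact that a homomorphism sends the normalizer of a subgroup into the normalizer of its image. For the reverse inclusion, I would take $\bar g\in N_{\mcg(\hat S,\Omega)}(\theta_S(H))$ and lift it, via the construction recorded in \Cref{subsection:pre-images capping}, to $\tilde g\in\mcg(S)$ with $\theta_S(\tilde g)=\bar g$; whether $\tilde g$ normalizes $H$ does not depend on the lift, since two lifts differ by the central $\Z^b$ and hence conjugate $H$ to the same subgroup. From $\theta_S(\tilde g H\tilde g^{-1})=\bar g\,\theta_S(H)\,\bar g^{-1}=\theta_S(H)$ one gets $\tilde g H\tilde g^{-1}\subseteq\theta_S^{-1}(\theta_S(H))=H\Z^b$, and symmetrically, so at least $\tilde g$ normalizes $H\Z^b$. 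To upgrade this to $\tilde g H\tilde g^{-1}=H$, for $h\in H$ choose $h'\in H$ with $\theta_S(h')=\bar g\,\theta_S(h)\,\bar g^{-1}$; then $\tilde g h\tilde g^{-1}$ and $h'$ have the same $\theta_S$-image, and one should deduce $\tilde g h\tilde g^{-1}=h'$ using the compatibility noted in \Cref{subsection:pre-images capping} — that the constructed preimage of the $\theta_S$-image of a mapping class of $S$ may be taken to be that class itself — applied with representatives of $\tilde g$, $h$, $h'$ chosen simultaneously to be the identity near $\partial S$. Running this over all $h\in H$ gives $\tilde g\in N_{\mcg(S)}(H)$, hence $\bar g\in\theta_S(N_{\mcg(S)}(H))$, and combining with the first paragraph yields the claimed sequence.

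The step I expect to be the main obstacle is precisely this last identification $\tilde g h\tilde g^{-1}=h'$. A priori these elements agree only modulo $\ker\theta_S=\Z^b$, and because $\Z^b$ is central the discrepancy cannot be absorbed by choosing a different lift of $\bar g$; what must genuinely be shown is that the boundary-twist part of $\tilde g h\tilde g^{-1}$ relative to $h'$ vanishes, i.e. that conjugation does not alter the amount of twisting around $\partial S$ detected by $\Z^b$. I would address this by carrying out the preimage bookkeeping of \Cref{subsection:pre-images capping} with diffeomorphisms supported away from a fixed collar of $\partial S$, so that $\tilde g h\tilde g^{-1}$ is literally represented by such a composition and its isotopy class is the designated preimage $h'$ of $\bar g\,\theta_S(h)\,\bar g^{-1}$; this is where the argument really uses the geometry of the capping construction rather than formal group theory.
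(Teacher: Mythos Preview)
Your strategy matches the paper's exactly: restrict $\theta_S$, note that $\Z^b$ is central and hence lies in $N_{\mcg(S)}(H)$, and then prove surjectivity onto $N_{\mcg(\hat S,\Omega)}(\theta_S(H))$ by lifting $\bar g$ and arguing that $\tilde g h\tilde g^{-1}=h'$ for each $h\in H$. You correctly isolate the only real issue---this last equality holds a priori only modulo $\Z^b$---and you are also right that changing the lift of $\bar g$ cannot help, since two lifts differ by a central element and so conjugate $H$ identically. The paper's proof is equally terse at this point and rests on the same unjustified step.

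Your proposed fix, however, does not close the gap. Representing $\tilde g$, $h$, $h'$ by diffeomorphisms that are the identity on a fixed collar $C$ of $\partial S$ indeed makes the composite representing $\tilde g h\tilde g^{-1}(h')^{-1}$ the identity on $C$, but this places no constraint on its class in $\mcg(S)$: \emph{every} mapping class of $S$, including each nontrivial boundary Dehn twist, is represented by a diffeomorphism that is the identity on $C$ (just isotope the supporting annulus off the collar). So ``identity near $\partial S$'' does not kill the $\Z^b$-discrepancy. In fact the surjectivity fails for general $H$. Take $S$ to be the $4$-punctured disk, so $\mcg(S)=B_4$ and $\Z^b=\langle\Delta^2\rangle$; set $\phi=\sigma_1^{2}\sigma_3^{-2}$ and $H=\langle\phi\Delta^2\rangle$. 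Since $\Delta\sigma_i\Delta^{-1}=\sigma_{4-i}$ one has $\Delta\phi\Delta^{-1}=\phi^{-1}$, so $\theta_S(\Delta)$ normalizes $\theta_S(H)=\langle\theta_S(\phi)\rangle$. But $\Delta(\phi\Delta^2)\Delta^{-1}=\phi^{-1}\Delta^2$, while $(\phi\Delta^2)^n=\phi^n\Delta^{2n}$; the only candidate is $n=-1$, giving $\phi^{-1}\Delta^{-2}\neq\phi^{-1}\Delta^2$. Hence no lift of $\theta_S(\Delta)$ normalizes $H$, and $\theta_S\bigl(N_{\mcg(S)}(H)\bigr)\subsetneq N_{\mcg(\hat S,\Omega)}(\theta_S(H))$. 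The conclusion does hold under the extra hypothesis $\Z^b\subseteq H$, since then $H=\theta_S^{-1}(\theta_S(H))$ and the argument becomes purely formal.
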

\begin{proof}
  We can restrict the capping homomorphism to the subgroup $N_{\mcg(S)}(H)\leq\mcg(S)$ to get the following central extension´
  $$1\to \Z^b\to N_{\mcg(S)}(H)\to \theta_S(N_{\mcg(S)}(H))\to 1.$$

   We show that   $\theta_S(N_{\mcg(S)}(H))=N_{\mcg(\hat S,\Omega)}(\theta_S(H))$; we only prove that $N_{\mcg(\hat S,\Omega)}(\theta_S(H))\subseteq \theta_S(N_{\mcg(S)}(H))$ since the other inclusion is clear.  Let  $f\in N_{\mcg(\hat S,\Omega)}(\theta_S(H))$, then for all $h\in H$  we have that $f\theta_S(h)f^{-1}=\theta_S(h')$, for some $h'\in H$. By taking the corresponding pre-images of $\theta_S$ as in  \cref{subsection:pre-images capping} we have that 
   $$\tilde{f}\widetilde{\theta_S(h)}\tilde{f}^{-1}=\widetilde{\theta_S(h')}, \text{ where $\widetilde{\theta_S(h)}=h$ and $\widetilde{\theta_S(h')}=h'$}.$$
  It follows that  $\tilde{f} \in N_{\mcg(S)}(H)$ and $f=\theta_s(\tilde{f})\in\theta_S(N_{\mcg(S)}(H))$.

  \end{proof}

\begin{proposition}\label{vcd:weyl:and:normalizer:boundary}
Let $S$ be a connected orientable compact (possibly with a finite number of punctures) surface with $b>0$ boundary components. Assume that $S$ has negative Euler characteristic.  For every   free abelian  subgroup $H$ of $\mcg(S)$ of rank $k\ge 1$ such that $\theta_S(H)$  is a (free) abelian  subgroup of $\mcg(\hat S,\Omega)\cap \mcg(\hat{S})[3]$, we have 
\begin{enumerate}[(a)]
    \item $N_{\mcg(S)}(H)$ and  $W_{\mcg(S)}(H)$ are virtual duality groups.
    \item $\vcd(N_{\mcg(S)}(H))= \vcd(W_{\mcg(S)}(H))+k$.
\end{enumerate}
\end{proposition}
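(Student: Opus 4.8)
The plan is to deduce the statement for a surface $S$ with non-empty boundary from the corresponding statement for the capped surface $\hat S$, which is the content of \Cref{vcd:weyl:and:normalizer}, by transporting the information across the central extension coming from the capping homomorphism. First I would invoke \Cref{normalizer:boundary} to obtain the central extension
$$1\to \Z^b\to N_{\mcg(S)}(H)\xrightarrow{\theta_S} N_{\mcg(\hat S,\Omega)}(\theta_S(H))\to 1,$$
and note that $N_{\mcg(\hat S,\Omega)}(\theta_S(H))$ is a finite-index subgroup of $N_{\mcg(\hat S)}(\theta_S(H))$ (the latter being the normalizer of $\theta_S(H)$ in the full mapping class group of the capped surface, where $\theta_S(H)$ sits inside $\mcg(\hat S)[3]$ by hypothesis). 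Since finite-index subgroups of virtual duality groups are virtual duality groups of the same virtual cohomological dimension (by \cite[Proposition~10.2]{Br94}, already used above), \Cref{vcd:weyl:and:normalizer}(a) applied to $\hat S$ gives that $N_{\mcg(\hat S,\Omega)}(\theta_S(H))$ and its Weyl group are virtual duality groups.

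Next I would feed this into \cite[Theorem~3.5]{BE73}: an extension of a virtual duality group by a virtual duality group (here $\Z^b$) is again a virtual duality group, with the vcd's adding. This proves that $N_{\mcg(S)}(H)$ is a virtual duality group and that $\vcd(N_{\mcg(S)}(H))=\vcd(N_{\mcg(\hat S,\Omega)}(\theta_S(H)))+b$. For the Weyl group, the subtlety is that $H$ need not be contained in the kernel $\Z^b$ of $\theta_S$, so $W_{\mcg(S)}(H)=N_{\mcg(S)}(H)/H$ does not literally sit in an obvious central extension by $\Z^b$. However, since $H$ is free abelian and normal in $N_{\mcg(S)}(H)$, one may again apply \cite[Theorem~3.5]{BE73} directly to the extension $1\to H\to N_{\mcg(S)}(H)\to W_{\mcg(S)}(H)\to 1$: knowing $N_{\mcg(S)}(H)$ is a virtual duality group and $H\cong\Z^k$ is a (Poincaré) duality group of dimension $k$, one concludes $W_{\mcg(S)}(H)$ is a virtual duality group with $\vcd(N_{\mcg(S)}(H))=\vcd(W_{\mcg(S)}(H))+k$, which is exactly item~(b). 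Alternatively one can reduce $W$ to $W_{\mcg(\hat S,\Omega)}(\theta_S(H))$ via the induced central extension by $\Z^b/(H\cap\Z^b)$ together with the corresponding statement downstairs, but the direct route is cleaner.

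The main obstacle I anticipate is purely bookkeeping rather than conceptual: one must check that the hypothesis on $\theta_S(H)$ (free abelian, contained in $\mcg(\hat S,\Omega)\cap\mcg(\hat S)[3]$) is precisely what is needed to invoke \Cref{vcd:weyl:and:normalizer} on the capped surface, and that passing to $N_{\mcg(\hat S,\Omega)}(\theta_S(H))$ versus $N_{\mcg(\hat S)}(\theta_S(H))$ only changes things by finite index so that the duality-group property and the vcd are unaffected. One should also confirm that $\hat S$ still has negative Euler characteristic, which is immediate since capping a boundary component with a once-punctured disk does not increase $\chi$. Once these routine verifications are in place, the two applications of \cite[Theorem~3.5]{BE73} and one application of \cite[Proposition~10.2]{Br94} assemble the proof.
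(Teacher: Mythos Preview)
Your proposal is correct, and the route you call the ``alternative'' is exactly what the paper does: it passes from the extension of \Cref{normalizer:boundary} to the induced Weyl-group extension
\[
1\to \Z^b/(\Z^b\cap H)\to W_{\mcg(S)}(H)\to W_{\mcg(\hat S,\Omega)}(\theta_S(H))\to 1,
\]
invokes \Cref{vcd:weyl:and:normalizer}(a) on the quotient, and then applies \cite[Theorem~3.5]{BE73} in the forward direction (kernel and quotient duality $\Rightarrow$ total duality) to conclude that $W_{\mcg(S)}(H)$ is a virtual duality group; part~(a) for $N_{\mcg(S)}(H)$ and part~(b) then follow from $1\to H\to N_{\mcg(S)}(H)\to W_{\mcg(S)}(H)\to 1$ just as at the end of the proof of \Cref{vcd:weyl:and:normalizer}. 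Your preferred ``direct route'' reverses the order, first showing $N_{\mcg(S)}(H)$ is a virtual duality group and then deducing the same for $W_{\mcg(S)}(H)$ from $1\to H\to N_{\mcg(S)}(H)\to W_{\mcg(S)}(H)\to 1$. That last step uses the \emph{converse} implication of the extension theorem (total and kernel duality $\Rightarrow$ quotient duality), which does hold when the kernel is $\Z^k$, but you should verify that \cite[Theorem~3.5]{BE73} actually states this direction or else give an independent reference; the paper only ever uses the forward direction. Apart from that citation check your argument is fine; the paper's route has the small advantage of reusing only the one implication already in play. Your remark about passing from $N_{\mcg(\hat S)}(\theta_S(H))$ to its finite-index subgroup $N_{\mcg(\hat S,\Omega)}(\theta_S(H))$ is a point the paper leaves implicit, and both approaches should also handle the degenerate case $\theta_S(H)=\{1\}$, where \Cref{vcd:weyl:and:normalizer} does not literally apply but the conclusion is immediate.
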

\begin{proof}
As in the proof of \cref{vcd:weyl:and:normalizer}, it is enough to prove that $W_{\mcg(S)}(H)$ is a virtual duality group. %Denote $G=\mcg(S)$ and $\hat G=\mcg(\hat S,\Omega)$. 
By \cref{normalizer:boundary} we get the following short exact sequence 
$$1\to \Z^b/(\Z^b\cap H)\to W_{\mcg(S)}(H)\to W_{\mcg(\hat S,\Omega)}(\theta_S(H))\to 1.$$  
By hypothesis $\theta_S(H)$  is a free abelian  subgroup of $\mcg(\hat S,\Omega)\cap \mcg(\hat{S})[3]$, then by  \cref{vcd:weyl:and:normalizer} $a)$ we have  that $W_{\hat{G}}(\theta_S(H))$ is a virtual duality group. From \cite[Theorem 3.5]{BE73} it follows that  $W_{\mcg(S)}(H)$ is a virtual duality group. 

%It follows from the next short exact sequence %\cref{s.e.s:norma:weil:1:boundary} and  \cite[Theorem 3.5]{BE73} that $N_{\mcg(S)}(H)$ is a virtually duality group.
%\begin{equation}%\label{s.e.s:norma:weil:1:boundary}
 %   1\to H\to N_{\mcg(S)}(H)\to W_{\mcg(S)}(H)\to 1
%\end{equation}
%The proof of item $(b)$  follows directly from  item (a), \cref{s.e.s:norma:weil:1:boundary} and   \cite[Theorem 3.5]{BE73}.
\end{proof}

\begin{proposition}\label{auxilar:proposition:boundary}
Let $b\ge 1$ be a natural number. Consider a central extension of groups \[1\to \Z^b\to G\xrightarrow[]{\theta} Q\to 1.\]  Suppose that for every finitely generated free abelian subgroup $A$ of $Q$ there is a finite index subgroup $A'$ of $A$ such that $N_Q[A']=N_Q(A')$. Then for every finitely generated free abelian subgroup $H$ of $G$ there is a subgroup $L$ of $G$ such that $L$ is commensurable with $H$ and $N_{G}[L]=N_G(L)$.  
\end{proposition}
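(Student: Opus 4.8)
The plan is to transfer the hypothesis from $Q$ to $G$ along $\theta$ and then produce $L$ as the \emph{saturation} of a suitable finite index subgroup of $H$ inside a free abelian preimage. \emph{Reduction and setup:} since $\theta(H)=H/(H\cap \Z^{b})$ is finitely generated abelian, I may first replace $H$ by a finite index subgroup so that $A:=\theta(H)$ is free abelian; this does not change the commensuration class of $H$ and keeps $H$ free abelian. Applying the hypothesis to $A\le Q$ yields a finite index subgroup $A'\le A$ with $N_{Q}[A']=N_{Q}(A')$. Put
\[M:=\theta^{-1}(A')\le G,\qquad L_{0}:=M\cap H\le H.\]
Then $L_{0}$ has finite index in $H$, so it is commensurable with $H$ and $N_{G}[L_{0}]=N_{G}[H]$; moreover $L_{0}$ is free abelian, $\theta(L_{0})=A'$, and $M=L_{0}\cdot\Z^{b}$, where $\Z^{b}=\ker\theta$.

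Next I would record two structural facts. First, $\theta\bigl(N_{G}[L_{0}]\bigr)\subseteq N_{Q}[A']=N_{Q}(A')$: if $g\in N_{G}[L_{0}]$, applying $\theta$ to the finite index subgroup $gL_{0}g^{-1}\cap L_{0}$ of both $L_{0}$ and $gL_{0}g^{-1}$, and using that $\theta$ restricted to these groups surjects onto $A'$, resp.\ onto $\theta(g)A'\theta(g)^{-1}$, shows that $\theta(g)A'\theta(g)^{-1}$ is commensurable with $A'$. Consequently $gMg^{-1}=\theta^{-1}\bigl(\theta(g)A'\theta(g)^{-1}\bigr)=\theta^{-1}(A')=M$, so $N_{G}[L_{0}]\subseteq N_{G}(M)$. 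Second, $M$ is a finitely generated free abelian group: it is abelian because $M=L_{0}\cdot\Z^{b}$ with $L_{0}$ abelian and $\Z^{b}$ central in $G$; it is finitely generated because $L_{0}$ and $\Z^{b}$ are; and it is torsion free because the central extension $1\to\Z^{b}\to M\to A'\to 1$ has free abelian quotient $A'$, hence splits.

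Now I would take $L:=M\cap\bigl(L_{0}\otimes_{\Z}\mathbb{Q}\bigr)$, the saturation of $L_{0}$ inside $M\otimes_{\Z}\mathbb{Q}$; it is the largest subgroup of $M$ commensurable with $L_{0}$, so $L$ is commensurable with $L_{0}$, hence with $H$, and $N_{G}[L]=N_{G}[L_{0}]$. To check $N_{G}[L_{0}]\subseteq N_{G}(L)$, take $g\in N_{G}[L_{0}]$; by the first fact $g\in N_{G}(M)$, so conjugation by $g$ is an automorphism of $M$, extends to $M\otimes_{\Z}\mathbb{Q}$, and carries $L_{0}\otimes\mathbb{Q}$ onto $(gL_{0}g^{-1})\otimes\mathbb{Q}$; thus $gLg^{-1}$ is the saturation of $gL_{0}g^{-1}$ in $M$. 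Since $gL_{0}g^{-1}$ and $L_{0}$ are commensurable sublattices of the finitely generated free abelian group $M$, they have the same saturation, so $gLg^{-1}=L$. Therefore $N_{G}[L]=N_{G}[L_{0}]\subseteq N_{G}(L)\subseteq N_{G}[L]$, which gives $N_{G}[L]=N_{G}(L)$ with $L$ commensurable with $H$, as desired.

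The only non-formal input is the second structural fact: that the preimage $M=\theta^{-1}(A')$ is genuinely \emph{free abelian} — not merely nilpotent of class two, which a central extension of a free abelian group could a priori be — since this is exactly what makes the saturation trick available; this is combined with the first fact, which forces the \emph{whole} commensurator $N_{G}[L_{0}]$, rather than only $N_{M}[L_{0}]$, to act on $M\otimes_{\Z}\mathbb{Q}$. Once both hold, the remainder is the standard observation that a commensuration class of lattices in a rational vector space has a canonical representative, stable under any automorphism preserving the class.
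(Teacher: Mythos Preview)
Your proof is correct. Both arguments hinge on a saturation step, but you place it differently: the paper saturates $H\cap\Z^{b}$ inside the kernel $\Z^{b}$, sets $T=MH$ and $L=\theta^{-1}(K)\cap T$, and then checks element by element that for $g\in N_{G}[L]$ and $l\in L$ the ``error'' $s\in\Z^{b}$ in $glg^{-1}=\tilde{l}s$ has a power lying in $M$, hence $s\in M$ by maximality. You instead observe that the full preimage $M=\theta^{-1}(A')$ is itself finitely generated free abelian (this is the crucial point, and your justification via $M=L_{0}\cdot\Z^{b}$ with $\Z^{b}$ central is exactly right), take $L$ to be the saturation of $L_{0}$ in $M$, and then argue globally: since $N_{G}[L_{0}]\subseteq N_{G}(M)$, conjugation by any $g\in N_{G}[L_{0}]$ is a $\Z$-linear automorphism of $M$ and therefore preserves the saturation of a commensuration class. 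In fact one can check the two constructions produce the \emph{same} subgroup $L$; your version trades the paper's hands-on power-chasing for the standard lattice principle that commensurable sublattices of a free abelian group share a canonical saturated representative, which makes the invariance under the commensurator transparent. The only extra move you need is the preliminary replacement of $H$ by a finite index subgroup so that $\theta(H)$ is free abelian, which the paper avoids by working with $\theta(H)$ directly; both are harmless.
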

\begin{proof}
Let $H$ be a  free abelian subgroup of $G$. Let $r$ be the rank of $\Z^b\cap H$. Since $\Z^b\cap H\le  \Z^b$, then there is unique free abelian subgroup $M$ of $\Z^b$ maximal of rank $r$ such that $\Z^b\cap H \leq M$. Let $T=MH$. Notice that $T$  is a subgroup of $G$ commensurable with $H$, and $\theta (T)=\theta(H)$ is a finitely generated abelian subgroup of $Q$. As a direct consequence of the hypothesis we can find a finite index subgroup $K$ of $\theta(H)$ such that $N_Q[K]=N_Q(K)$.  Let $L=\theta^{-1}(K)\cap T$. Note that $L$ is a finite index subgroup of $T$, which is also commensurable with $H$. We claim that $N_G[L]=N_G(L)$. We only prove that $N_G[L]\subseteq N_G(L)$ as the other inclusion is clear. Let $g\in N_G[L]$ and  $l\in L$. Since that $\theta(N_G[L])\subseteq N_Q[\theta(L)]=N_Q(K)$ we have that $\theta(g)\in N_Q(K)$.  As a consequence we have  $\theta(g)\theta(l)\theta(g)^{-1}= \theta(\tilde{l})$ for some $\tilde{l}\in L$, and therefore $g l g^{-1}=\tilde{l}s$ with $s\in \Z^{b}$. We show  that $s\in M\subset L$, hence we will have that  $g l g^{-1}\in L$, and  $g\in N_G(L)$.  Since $g\in N_G[L]$ we have that $g L g^{-1}\cap L$ has finite index in $g L g^{-1}$ and the extension is central there is $n\in \mathbb{N}$ such that $g l^{n} g^{-1}=\tilde{l}^{n}s^n\in L$. This implies $s^n\in L\cap \Z^b=M$, and the maximality of $M$ gives us that $s\in M$.
\end{proof}

\begin{proposition}\label{normarlizer:equal:Commen:subgroup}
Let $G$ be a group and $K$ a subgroup of $G$. Let $H$ be a subgroup of $K$, suppose that there is a subgroup $L$ of $K$ such that $N_G[H]=N_G(L)$, then  $N_K[H]=N_K(L)$. 
\end{proposition}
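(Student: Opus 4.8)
The plan is to deduce the equality from two elementary identities that relate normalizers and commensurators computed inside the subgroup $K$ to those computed inside $G$, and then simply substitute the hypothesis.

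First I would observe that $N_K[H] = N_G[H]\cap K$. This holds because $H\le K$, so for any $g\in K$ the conjugate $gHg^{-1}$ again lies in $K$, and the condition that $H\cap gHg^{-1}$ have finite index in both $H$ and $gHg^{-1}$ is intrinsic to the pair of subgroups and makes no reference to an ambient group. Hence $g$ commensurates $H$ "inside $K$" exactly when $g\in K$ and $g$ commensurates $H$ inside $G$.

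Second I would record the analogous — and even simpler — identity $N_K(L)=N_G(L)\cap K$, valid for any subgroup $L\le K$: an element $g\in K$ satisfies $gLg^{-1}=L$ if and only if $g\in K$ and $g\in N_G(L)$.

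Finally, combining these two identities with the hypothesis $N_G[H]=N_G(L)$ yields
\[
N_K[H]=N_G[H]\cap K=N_G(L)\cap K=N_K(L),
\]
which is exactly the claim. I do not expect any genuine obstacle here; the only point that warrants a sentence of care is the first identity, namely the remark that commensurability of two subgroups is an absolute notion, independent of any overgroup containing them — this is immediate from the definition of commensurability recalled in the introduction, and in particular from the stated fact that $N_G[H]$ depends only on the commensuration class of $H$.
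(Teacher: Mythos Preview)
Your argument is correct and is exactly the approach taken in the paper: the proof there is the single line $N_K[H]=N_G[H]\cap K=N_G(L)\cap K=N_K(L)$, which is precisely the chain of equalities you derive after justifying the two intersection identities.
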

\begin{proof}
Note that $N_K[H]=N_G[H]\cap K=N_G(L)\cap K=N_K(L).$
\end{proof}

\begin{proposition}\label{commensu:equal:normalizer}
Let $S$ be a connected compact (possibly with punctures) orientable surface  with $b>0$ boundary components. Assume that $S$ has negative Euler characteristic. Let $H$ be a virtually abelian subgroup of $\mcg(S)$ of rank $k\geq 2$, then there is a subgroup $L$ of $\mcg(S)$  such that $L$ is commensurable with $H$ ,   $N_{\mcg(S)}[L]= N_{\mcg(S)}(L)$ and $\theta_S(L)$ is a subgroup of $\mcg(\hat{S})[3]$.  
\end{proposition}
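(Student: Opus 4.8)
The plan is to deduce the statement from \Cref{auxilar:proposition:boundary} applied to the central extension coming from the capping homomorphism, taking extra care to land inside the congruence subgroup $\mcg(\hat S)[3]$. First I would replace $H$ by a finite index free abelian subgroup of rank $k$: such a subgroup exists since $H$ is finitely generated by \Cref{maximal:rank:abelian:subgroup}, it is commensurable with the original $H$, and the commensurator depends only on the commensuration class, so we may assume $H\cong\Z^k$. Recall the central extension $1\to\Z^b\to\mcg(S)\xrightarrow{\theta_S}\mcg(\hat S,\Omega)\to 1$, where $\hat S$ is the closed orientable punctured surface obtained by capping the $b$ boundary components, so $\chi(\hat S)=\chi(S)<0$, and where $\mcg(\hat S,\Omega)\leq\mcg(\hat S)$. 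If $\theta_S(H)$ is finite, then $\Z^b\cap H$ has finite index in $H$; since $\Z^b$ is central in $\mcg(S)$, setting $L=\Z^b\cap H$ gives $\theta_S(L)=1\subseteq\mcg(\hat S)[3]$ and $N_{\mcg(S)}[L]=N_{\mcg(S)}(L)=\mcg(S)$, so this degenerate case is immediate. From now on I assume $\theta_S(H)$ is infinite.

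The key step is to produce a finite index subgroup $K$ of $\theta_S(H)$ with $K\leq\mcg(\hat S)[3]$ and $N_{\mcg(\hat S,\Omega)}[K]=N_{\mcg(\hat S,\Omega)}(K)$; granting this, the hypothesis of \Cref{auxilar:proposition:boundary} for the quotient $Q=\mcg(\hat S,\Omega)$ is also satisfied. Set $B=\theta_S(H)\cap\mcg(\hat S)[3]$, a finite index subgroup of $\theta_S(H)$ which is free abelian (as $\mcg(\hat S)[3]$ is torsion free) and infinite. If $\rank B\geq 2$, then \Cref{norrmalizer:commensurator:equal} applied to $\hat S$ and $B$ yields a finite index subgroup $K\leq B$ with $K\leq\mcg(\hat S)[3]$ and $N_{\mcg(\hat S)}[K]=N_{\mcg(\hat S)}(K)$. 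If $\rank B=1$, then \Cref{lemma:com:norm:rank1} applied to $\hat S$ and $B$ yields a finite index cyclic $K\leq B$ with $N_{\mcg(\hat S)}[B]=N_{\mcg(\hat S)}(K)$, whence $N_{\mcg(\hat S)}[K]=N_{\mcg(\hat S)}[B]=N_{\mcg(\hat S)}(K)$ since $K$ has finite index in $B$. In either case $K\leq\mcg(\hat S)[3]$, $K$ has finite index in $\theta_S(H)$, and $N_{\mcg(\hat S)}[K]=N_{\mcg(\hat S)}(K)$; feeding this into \Cref{normarlizer:equal:Commen:subgroup} for the pair $\mcg(\hat S,\Omega)\leq\mcg(\hat S)$ (with the roles of both $H$ and $L$ there played by $K$) gives $N_{\mcg(\hat S,\Omega)}[K]=N_{\mcg(\hat S,\Omega)}(K)$, as wanted.

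Finally I would run the remainder of the proof of \Cref{auxilar:proposition:boundary} verbatim: let $r=\rank(\Z^b\cap H)$, let $M$ be the unique maximal rank-$r$ free abelian subgroup of $\Z^b$ containing $\Z^b\cap H$, set $T=MH$ (commensurable with $H$, with $\theta_S(T)=\theta_S(H)$), and put $L=\theta_S^{-1}(K)\cap T$. Then $L$ is commensurable with $H$, one checks $\theta_S(L)=K\subseteq\mcg(\hat S)[3]$ and $L\cap\Z^b=M$, and the maximality of $M$ together with the centrality of $\Z^b$ forces $N_{\mcg(S)}[L]=N_{\mcg(S)}(L)$ exactly as in that proof. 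I expect the main obstacle to be precisely this interleaving: one cannot simply invoke \Cref{auxilar:proposition:boundary} as a black box, since the equality $N[\,\cdot\,]=N(\,\cdot\,)$ is not inherited when passing to an arbitrary finite index subgroup, so the congruence condition $K\leq\mcg(\hat S)[3]$ has to be built into the choice of $K$ from the outset; and the rank-one part of the quotient has to be handled separately via \Cref{lemma:com:norm:rank1}, since \Cref{norrmalizer:commensurator:equal} only applies in rank at least two.
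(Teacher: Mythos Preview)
Your argument is correct and follows the same route as the paper: verify that abelian subgroups of $\mcg(\hat S,\Omega)$ satisfy the normalizer--commensurator hypothesis (via \Cref{norrmalizer:commensurator:equal}, \Cref{lemma:com:norm:rank1}, and \Cref{normarlizer:equal:Commen:subgroup}), then feed the capping central extension into \Cref{auxilar:proposition:boundary}. The paper does this more tersely, simply citing \Cref{auxilar:proposition:boundary} as a black box; you are right that the extra conclusion $\theta_S(L)\subseteq\mcg(\hat S)[3]$ is not part of the \emph{statement} of \Cref{auxilar:proposition:boundary} but only of its proof (where $\theta(L)=K$), so your decision to rerun that construction with $K$ chosen inside $\mcg(\hat S)[3]$ from the start, and to treat the rank-one quotient case separately, makes the argument more self-contained without changing its substance.
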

\begin{proof}
Consider 
the central extension $$1\to \Z^b\to \mcg(S)\xrightarrow[]{\theta_S}\mcg(\hat{S},\Omega)\to 1.$$  

By \cref{norrmalizer:commensurator:equal} we have that for every virtually abelian  subgroup $H$ of $\mcg(\hat S)$ of rank $k\ge 2$, there is a subgroup $L\leq H$ of finite index such that $L\leq \mcg(\hat S)[3]$ and $N_{\mcg(S)}[L]=N_{\mcg(S)}(L)$.
By \cref{normarlizer:equal:Commen:subgroup}, we conclude that for every abelian subgroup $A$ of $\mcg(\hat{S},\Omega)$ there is a finite index subgroup $A'$ of $A$ such that $N_{\mcg(\hat{S},\Omega)}[A']=N_{\mcg(\hat{S},\Omega)}(A').$ The proposition follows from \cref{auxilar:proposition:boundary}.

\end{proof}
%\subsection{Non-orientable surfaces}\label{non-orientable:surfaces}
%Let $N$ be a non-orientable connected  compact surface possibly with non-empty boundary and finitely many punctures. Recall that the {\it mapping class group of $N$} is the group $\mcg(N)$ of isotopy classes of diffeomorphisms of $N$ that restrict to the identity on the boundary $\partial N$. 

%Let $p\colon M\to N$ be the orientable double cover of $N$, i.e. $M$ is connected and orientable, and $p$ is a two-sheeted covering map. Assume $N$ has negative Euler characteristic. Provided that, $N$ has no punctures and possible non-empty boundary \cite[Proposition 26]{MR3703250}, or $N$ has punctures and empty boundary \cite{MR3869010}, we have an injective homomorphism
%\begin{equation}\label{double:covering:mono}
 %   \iota \colon \mcg(N) \to \mcg(M).
%\end{equation}

%To the best knowledge of the authors there is no reference that establishes the analogue result when $N$ has both punctures and non-empty boundary. In what follows, whenever we have the injective homomorphism $\iota \colon \mcg(N) \to \mcg(M)$  we identify $\mcg(N)$ with the image of $\iota$.  

\subsection{Non-orientable surfaces}\label{non-orientable:surfaces}
Let $N$ be a non-orientable connected  closed surface possibly with finitely many punctures. Recall that the {\it mapping class group of $N$} is the group $\mcg(N)$ of isotopy classes of diffeomorphisms of $N$. % that restrict to the identity on the boundary $\partial N$. 
Let $p\colon M\to N$ be the orientable double cover of $N$, i.e. $M$ is connected and orientable, and $p$ is a two-sheeted covering map. Assume $N$ has negative Euler characteristic, then % Provided that, $N$ has no punctures and possible non-empty boundary \cite[Proposition 26]{MR3703250}, or $N$ has punctures and empty boundary \cite{MR3869010}, 
 there is an injective homomorphism   $\iota \colon \mcg(N) \to \mcg(M)$ (see \cite{BC72} and \cite{GGM18}). In what follows,  
we identify $\mcg(N)$ with the image of $\iota$.

%To the best knowledge of the authors there is no reference that establishes the analogue result when $N$ has both punctures and non-empty boundary. 

\begin{proposition}
Let $N$ be a connected non-orientable  closed surface with  finitely many punctures. Assume that $N$ has negative Euler characteristic.  Let $\Gamma=\mcg(N)\cap \mcg(M)[3]$, then $\Gamma$ is a torsion-free finite index subgroup of $\mcg(N)$. Moreover,  let $H$ be a free abelian subgroup of $\Gamma$, then $C_{\mcg(N)}(H)$ has finite index in $N_{\mcg(N)}(H)$, and $N_{\mcg(N)}(H)$ has finite index in $N_{\mcg(N)}[H]$.
\end{proposition}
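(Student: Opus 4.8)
The plan is to reduce everything to the orientable case via the embedding $\iota\colon\mcg(N)\hookrightarrow\mcg(M)$ and the results already established for closed orientable surfaces. For the first assertion, recall from \Cref{MCG} that $\mcg(M)[3]$ is a torsion-free finite index subgroup of $\mcg(M)$. Identifying $\mcg(N)$ with its image under $\iota$, the group $\Gamma=\mcg(N)\cap\mcg(M)[3]$ is the intersection of $\mcg(N)$ with a finite index subgroup of $\mcg(M)$, hence has finite index in $\mcg(N)$; and as a subgroup of the torsion-free group $\mcg(M)[3]$ it is itself torsion-free.

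For the second assertion, let $H$ be a free abelian subgroup of $\Gamma$. Since $\Gamma\subseteq\mcg(M)[3]$, the group $H$ is a free abelian subgroup of $\mcg(M)[3]$. The orientable double cover $M$ is a closed connected orientable surface with finitely many punctures and $\chi(M)=2\chi(N)<0$, so \Cref{cor:centra:norma} applies and yields that $C_{\mcg(M)}(H)$ has finite index in $N_{\mcg(M)}(H)$, and that $N_{\mcg(M)}(H)$ has finite index in $N_{\mcg(M)}[H]$.

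It then remains to transfer these finite index statements from $\mcg(M)$ down to $\mcg(N)$. Since $H$ is contained in the subgroup $\mcg(N)$ of $\mcg(M)$, and since commensurability of a pair of subgroups is an intrinsic property independent of the ambient group, the definitions of centralizer, normalizer and commensurator give $C_{\mcg(N)}(H)=C_{\mcg(M)}(H)\cap\mcg(N)$, $N_{\mcg(N)}(H)=N_{\mcg(M)}(H)\cap\mcg(N)$, and $N_{\mcg(N)}[H]=N_{\mcg(M)}[H]\cap\mcg(N)$. I would then invoke the elementary fact that if $A\le B$ are subgroups of a group with $[B:A]<\infty$ and $C$ is any subgroup, then $[B\cap C:A\cap C]\le[B:A]$. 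Applying this with $C=\mcg(N)$ to the pairs $C_{\mcg(M)}(H)\le N_{\mcg(M)}(H)$ and $N_{\mcg(M)}(H)\le N_{\mcg(M)}[H]$ yields exactly that $C_{\mcg(N)}(H)$ has finite index in $N_{\mcg(N)}(H)$ and that $N_{\mcg(N)}(H)$ has finite index in $N_{\mcg(N)}[H]$.

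The argument is essentially bookkeeping once \Cref{cor:centra:norma} is in hand. The only mild subtlety is the identification of the three subgroups of $\mcg(N)$ with the intersections against $\mcg(N)$ of the corresponding subgroups of $\mcg(M)$, which rests on the fact that conjugation and commensurability are computed in the same way in $\mcg(N)$ and in $\mcg(M)$; I do not anticipate any genuine obstacle.
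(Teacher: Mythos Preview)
Your proof is correct and follows essentially the same approach as the paper: both reduce to the orientable case via the embedding $\mcg(N)\hookrightarrow\mcg(M)$, apply \Cref{cor:centra:norma} to $H\leq\mcg(M)[3]$, and then intersect the resulting chain $C_{\mcg(M)}(H)\le N_{\mcg(M)}(H)\le N_{\mcg(M)}[H]$ with $\mcg(N)$. The paper compresses the last step into the sentence ``Taking the intersection with $\mcg(N)$ respectively we have the claim,'' whereas you spell out the identifications $C_{\mcg(N)}(H)=C_{\mcg(M)}(H)\cap\mcg(N)$, etc., and the elementary index inequality $[B\cap C:A\cap C]\le[B:A]$; this extra detail is welcome but not a different argument.
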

\begin{proof}
Note that, since $N$ has negative Euler characteristic, so does its double cover $M$. Since  $\mcg(M)[3]$ is a torsion-free finite index subgroup of $\mcg(M)$ if follows that $\Gamma=\mcg(N)\cap \mcg(M)[3]$ is a torsion-free finite index subgroup of $\mcg(N)$.   Let $H$ be a free abelian subgroup of $\Gamma$, in particular $H$ is a free abelian subgroup of $\mcg(M)[3]$, hence by \cref{cor:centra:norma},  $C_{\mcg(M)}(H)$ has finite index in $N_{\mcg(M)}(H)$, and $N_{\mcg(M)}(H)$ has finite index in $N_{\mcg(M)}[H]$. Taking the intersection with $\mcg(N)$ respectively we have the claim.
\end{proof}

\begin{proposition}\label{Prop:normalizers:equal:comm:nonorien}
Let $N$ be a connected non-orientable closed surface with  finitely many punctures. Assume that $N$ has negative Euler characteristic. Let $H$ be a virtually abelian subgroup of $\mcg(N)$ of rank $k\geq1$, then there is a finite index subgroup $L$ of $H$ such that $N_{\mcg(N)}[L]=N_{\mcg(N)}(L)$. 
\end{proposition}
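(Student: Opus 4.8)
The idea is to transfer the equality from the orientable double cover $M$ of $N$ via the embedding $\iota\colon\mcg(N)\hookrightarrow\mcg(M)$ of \Cref{non-orientable:surfaces}, and then push it down to $\mcg(N)$ using \Cref{normarlizer:equal:Commen:subgroup}. First I would note that $M$ is a connected, orientable, closed surface with finitely many punctures and $\chi(M)=2\chi(N)<0$, so the results of this section for closed orientable surfaces apply to $\mcg(M)$; throughout we identify $\mcg(N)$ with its image $\iota(\mcg(N))\le\mcg(M)$.

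The first step is to produce a finite index subgroup $L\le H$ with $N_{\mcg(M)}[L]=N_{\mcg(M)}(L)$, treating the two cases $k=1$ and $k\ge 2$ separately. If $k=1$, then $H$ is infinite virtually cyclic, and \Cref{lemma:com:norm:rank1} applied to $H\le\mcg(M)$ yields a finite index cyclic subgroup $L\le H$ with $N_{\mcg(M)}[H]=N_{\mcg(M)}(L)$; since commensurators depend only on the commensuration class, $N_{\mcg(M)}[L]=N_{\mcg(M)}[H]=N_{\mcg(M)}(L)$. If $k\ge 2$, then $H$ contains a finite index subgroup isomorphic to $\Z^k$, and intersecting it with the torsion-free finite index subgroup $\mcg(M)[3]$ gives a free abelian subgroup $H_0\le\mcg(M)[3]$ of rank $k$ that is again of finite index in $H$. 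Applying \Cref{norrmalizer:commensurator:equal} to $H_0$ produces a finite index subgroup $L\le H_0$ with $L\le\mcg(M)[3]$ and $N_{\mcg(M)}[L]=N_{\mcg(M)}(L)$. In both cases $L$ is of finite index in $H$, and in particular $L\le\mcg(N)$.

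Finally, since $L\le\mcg(N)$ and $N_{\mcg(M)}[L]=N_{\mcg(M)}(L)$, \Cref{normarlizer:equal:Commen:subgroup} (with $G=\mcg(M)$, $K=\mcg(N)$, and both distinguished subgroups taken to be $L$) gives $N_{\mcg(N)}[L]=N_{\mcg(N)}(L)$, which is exactly the assertion, with $L$ of finite index in $H$.

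I do not expect a serious obstacle here: the argument is a clean reduction to the closed orientable case already handled. The only points that need a word of care are checking that $M$ meets the hypotheses of \Cref{lemma:com:norm:rank1} and \Cref{norrmalizer:commensurator:equal} (immediate), that passing to $H\cap\mcg(M)[3]$ preserves both the rank and abelianity, and that the rank $1$ case genuinely needs the separate input \Cref{lemma:com:norm:rank1}, since \Cref{norrmalizer:commensurator:equal} is stated only for rank at least $2$.
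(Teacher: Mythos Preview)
Your argument is correct and follows the same route as the paper: pass to the orientable double cover $M$, invoke \Cref{lemma:com:norm:rank1} (for $k=1$) or \Cref{norrmalizer:commensurator:equal} (for $k\ge 2$) to obtain a finite index $L\le H$ with $N_{\mcg(M)}[L]=N_{\mcg(M)}(L)$, and then intersect with $\mcg(N)$ via \Cref{normarlizer:equal:Commen:subgroup}. The only difference is that your intermediate passage to $H_0\le\mcg(M)[3]$ in the $k\ge 2$ case is redundant, since \Cref{norrmalizer:commensurator:equal} already accepts a virtually abelian $H$ and returns $L\le\mcg(S)[3]$ directly.
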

\begin{proof}
Since the double cover $M$ of $N$ is an orientable surface with $\chi(M)<0$, then by \Cref{lemma:com:norm:rank1} and \cref{norrmalizer:commensurator:equal} there is a finite index subgroup $L$ of $H$ such that  $N_{\mcg(M)}[L]=N_{\mcg(M)}(L)$. The claim follows from  \cref{normarlizer:equal:Commen:subgroup} \end{proof}

\section{Virtually abelian dimension of mapping class groups}\label{section:virtually:abelian:dimension}

In this section we prove \cref{Thm:dimension:bound:main}.  We first  use the short exact sequences from \cref{Normalizer:commensurator:free:abelian:subgroup} to obtain, in \cref{lemma:auxiliar:dimensions} and \cref{lemma:auxiliar:dimensions:boundary}, upper bounds for geometric dimensions of normalizers and  Weyl groups of some abelian subgroups of $\mcg(S)$.  The upper bounds in \cref{Thm:dimension:bound:main} are then obtained by an induction argument using a L\"uck and Weiermann push-out construction given in \cref{Luck:weiermann} and the push-out of the union of two families given in \cref{lemma:union:families}. The proof relies on the realization of commensurators of abelian subgroups as normalizers obtained in \Cref{thm:normalizer:commensurator:general:case} (see \cref{norrmalizer:commensurator:equal} and \cref{commensu:equal:normalizer}).

\begin{lemma}\label{lemma:auxiliar:dimensions}
 Let $S$ be a connected, closed, oriented  and possibly with a finite number of punctures. Assume that $S$ has negative Euler characteristic, and let  $G=\mcg(S)$. Consider $L$ a free abelian subgroup of $\mcg(S)[3]$ of rank  $k\geq 2$. Define $\calG$ as the smallest family containing  $\{K\subseteq N_G(L)| K\in  \calF_{k}-\calF_{k-1}, K\sim L\}\cup (\calF_0\cap N_G(L))$, and $\calF=\calF_{k-1}\cap N_G(L)$. Let $W_G(L)=N_G(L)/L$. Then
\begin{enumerate}[i)]
    \item $\gd_{\calF_0}(W_G(L))\leq \vcd(W_G(L))$,
    \item $\gd_{\calG}(N_{G}(L))\leq \vcd (W_G(L))$,
    \item $\gd_{\calF \cap \calG}(N_{G}(L)) \leq \vcd (W_G(L))+2k-1$.
\end{enumerate}
\end{lemma}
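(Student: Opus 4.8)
The plan is to extract from \Cref{Normalizer:commensurator:free:abelian:subgroup} the central extension $1\to \Z^k \to N_G(L)\xrightarrow{\rho_\sigma} Q\to 1$, where $Q$ has finite index in $\mcg(\bigsqcup_{i=1}^a \hat S_i)\times A$ with $A$ finitely generated virtually abelian, and to quotient by $L$ to obtain $1\to \Z^k/(L\cap \Z^k)\to W_G(L)\xrightarrow{\rho_\sigma} Q/\rho_\sigma(L)\to 1$. For item (i), I would first note that each of $\mcg(\bigsqcup_{i=1}^a\hat S_i)$ (it has the finite-index subgroup $\prod_{i=1}^a\mcg(\hat S_i,\Omega_i)$, hence is a virtual duality group by Harer) and $A$ (finitely generated virtually abelian, hence a virtual duality group) admits a model for its classifying space for the family of finite subgroups of dimension equal to its virtual cohomological dimension: for the mapping class group factor this is \cite[Theorem 1.1]{AMP14} together with \cite{Ha86} (using a torsion-free finite index subgroup and Serre's construction), and for virtually abelian groups this is classical. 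Taking products, and using that passing to a finite index subgroup does not increase $\gd_{\calF_0}$ nor $\vcd$, one gets $\gd_{\calF_0}(Q/\rho_\sigma(L))\le \vcd(Q/\rho_\sigma(L))$ after quotienting by the central finitely generated abelian group $\rho_\sigma(L)$ (which again preserves the virtual duality property and the equality $\gd_{\calF_0}=\vcd$). Finally, since $W_G(L)$ is a central extension of $Q/\rho_\sigma(L)$ by a finitely generated abelian group, pulling back a model along the projection and applying the standard dimension bound for such extensions (as in the induction machinery used for abelian families, e.g. the analogue of \cite[Proposition 5.3]{Nucinkis:Petrosyan}) yields $\gd_{\calF_0}(W_G(L))\le \vcd(W_G(L))$, using that $W_G(L)$ is itself a virtual duality group by \Cref{vcd:weyl:and:normalizer}(a) so that $\vcd$ adds correctly across the extension.

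For item (ii), the observation is that the family $\calG$ on $N_G(L)$ is exactly the pullback under the projection $p\colon N_G(L)\to W_G(L)$ of the family of finite subgroups of $W_G(L)$: indeed any $K\in\calG$ is either finite-in-$N_G(L)$-mod-$L$, i.e. $K\cap L$ has finite index in $K$, or is commensurable with $L$, which again means $K\cap L$ has finite index in $K$; conversely any subgroup containing a finite-index copy of $L$ lands on a finite subgroup of $W_G(L)$, and all these classes are subgroup-closed and conjugation-closed. Hence a model for $E_{\calF_0}W_G(L)$ becomes, via $p$, a model for $E_{\calG}N_G(L)$, giving $\gd_{\calG}(N_G(L))\le \gd_{\calF_0}(W_G(L))\le \vcd(W_G(L))$ by (i).

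For item (iii), I would apply the standard family-comparison inequality (the "$\gd$ of the intersection" bound used repeatedly in this area) to the inclusion $\calF\cap\calG\subseteq\calG$: one gets $\gd_{\calF\cap\calG}(N_G(L))\le \gd_{\calG}(N_G(L))+d$ where $d$ is any number with $\gd_{\calF\cap\calG\cap K}(K)\le d$ for all $K\in\calG$. Each such $K$ is virtually $\Z^t$ for some $0\le t\le k$. If $t\le k-1$ then $K\in\calF\cap\calG\cap K$ itself, so $\gd_{\calF\cap\calG\cap K}(K)=0$. If $t=k$, then I claim $\calF\cap\calG\cap K=\calF_{k-1}\cap K$: the inclusion $\subseteq$ is clear since $\calF\subseteq\calF_{k-1}$, and conversely any $M\le K$ with $M\in\calF_{k-1}$ lies in $\calF_{k-1}\cap N_G(L)=\calF$ and in $\calG$ since $M\le K\in\calG$; so $\gd_{\calF\cap\calG\cap K}(K)=\gd_{\calF_{k-1}\cap K}(K)\le (k-1)+k=2k-1$ by the bound of Lopes Onório–... for classifying spaces of virtually $\Z^k$ groups for the family of subgroups of rank $<k$ (the virtually abelian analogue of the Lück–Weiermann estimate, cf. \cite{MR4472883}). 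Combining with (ii), $d=2k-1$ works and $\gd_{\calF\cap\calG}(N_G(L))\le \vcd(W_G(L))+2k-1$. The main obstacle is the bookkeeping in (iii) — verifying the identification $\calF\cap\calG\cap K=\calF_{k-1}\cap K$ cleanly and citing the correct dimension bound for virtually $\Z^k$ groups with respect to the family $\calF_{k-1}$ — together with making sure in (i) that all the "passing to finite index" and "central quotient" steps genuinely preserve the equality $\gd_{\calF_0}=\vcd$, which relies on the virtual duality group statements from \Cref{vcd:weyl:and:normalizer}.
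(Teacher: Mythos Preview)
Your proposal is correct and follows essentially the same route as the paper's proof: the same short exact sequences from \Cref{Normalizer:commensurator:free:abelian:subgroup} and \Cref{vcd:weyl:and:normalizer} for (i), the same pullback identification of $\calG$ along $p\colon N_G(L)\to W_G(L)$ for (ii), and the same family-comparison inequality (\cite[Proposition~5.1(i)]{LW12}) with the identical case split and $d=2k-1$ for (iii), where the paper cites \cite[Proposition~1.3]{tomasz} for the bound $\gd_{\calF_{k-1}\cap K}(K)\le 2k-1$. One small notational slip: the kernel of $\rho_\sigma$ in the central extension is $\Z^n$ with $n=|\sigma(L)|$ the number of curves in the canonical reduction system, not $\Z^k$; this does not affect the argument.
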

\begin{proof}
First we prove item $i)$. By  \cref{Normalizer:commensurator:free:abelian:subgroup} we have the following   short exact sequence 
\begin{equation*}%\label{s.e.s:normalaizer:1}
1\to \Z^n \to  N_{G}(L)\xrightarrow[]{\rho_{\sigma}} Q\to 1,
\end{equation*}
where $Q$ is a finite index subgroup of  $\mcg(\bigsqcup_{i=1}^{a}S_i)\times A$  and $A$ is a finitely generated virtually abelian group. Hence we have the next short exact sequence of virtual duality groups
\begin{equation*}%\label{s.e.s:weil:1}
1\to  \Z^n/(L\cap\Z^n) \to  W_G(L)\xrightarrow[]{\rho_{\sigma}} Q/\rho_{\sigma}(L)\to 1,
\end{equation*}
  see the proof of \cref{vcd:weyl:and:normalizer}. Then we have $\vcd(W_G(L))=\vcd(\Z^n/(L\cap\Z^n))+\vcd(Q/\rho_{\sigma}(L))$. Note that the family  $\calF_0$ of $W_G(L)$ is contained in the pullback family $\calH$ of finite subgroups of $Q/\rho_{\sigma}(L)$, thus from \cite[Proposition 2.4]{Nucinkis:Petrosyan}  it is enough to show that   $Q/\rho_{\sigma}(L)$ has a model for $E_{\calF_0}(Q/\rho_{\sigma}(L))$ of dimension $\vcd( Q/\rho_{\sigma}(L))$. Since that   $Q/\rho_{\sigma}(L)\subseteq \mcg(\bigsqcup_{i=1}^{a}S_i)\times A/\rho_{\sigma}(L)$, then 
\begin{equation*}
\begin{split}
    \gd_{\calF_0}(Q/\rho_{\sigma}(L))&\leq \gd_{\calF_0}(\mcg(\bigsqcup_{i=1}^{a}S_i))+ \gd_{\calF_0}(A/\rho_{\sigma}(L))\\
    &= \vcd( \mcg(\bigsqcup_{i=1}^{a}S_i)) + \vcd (A/\rho_{\sigma}(L)), \text{\cite[Prop. 5.3]{Nucinkis:Petrosyan} and \cite[Thm. 5.13]{LW12}}\\
    &=\vcd ( \mcg(\bigsqcup_{i=1}^{a}S_i) \times A/\rho_{\sigma}(L)), \text{as a consequence of \Cref{vcd:weyl:and:normalizer}}\\
    &= \vcd(Q/\rho_\sigma (L)), \text{as a consequence of \Cref{Normalizer:commensurator:free:abelian:subgroup}}.
\end{split}
\end{equation*}
 
Now we prove item $ii)$, i.e.  $\gd_{\calG}(N_{G}(L))\leq \vcd( W_G(L))$. A model for $E_{\calF_0}W_G(L)$ is a model for $E_{\calG}N_{G}(L)$ via the action given by the projection $p\colon N_{G}(L)\to W_{G}(L)$ since the family $\calG$ is the pullback under $p$, of the family of finite subgroups of $W_{G}(L)$, i.e. $\calG$ is the smallest family containing
 $\{p^{-1}(S): S \text{  is a finite subgroup of } W_G(L)\}$.
 Thus it is enough to prove that $W_G(L)$ admits a model for $E_{\calF_0}W_G(L)$ of dimension $\vcd (W_G(L))$. The latter follows from item $i)$.
 
Now we prove item $iii)$, i.e. $\gd_{\calF \cap \calG}(N_{G}(L)) \leq \vcd (W_G(L))+2k-1$.
Applying  \cite[Proposition 5.1 (i)]{LW12} to the inclusion of families $\calF\cap \calG \subset \calG$ we get
\[\gd_{\calF \cap \calG}(N_{G}(L))\leq \gd_\calG(N_G(L))+d\]
for some $d$ such that for any $K\in \calG$ we have $\gd_{\calF\cap \calG\cap K}(K)\leq d$.
Since we already proved $\gd_{\calG}(N_{G}(L))\leq \vcd(W_G(L))$, our next task is to show that $d$ can be chosen to be equal to $2k-1$.

Recall that any $K\in \calG$ is virtually $\Z^t$ for some $0\leq t \leq k$. We split our proof into two cases. First assume that $K\in \calG$ is virtually $\Z^t$ for some $0\leq t \leq k-1$. Hence $K$ belongs to $\calF$, and also belongs to $\calG$ by hypothesis, therefore $K$ belongs to $\calF \cap \calG$ and we conclude $\gd_{\calF\cap\calG\cap K}(K)=0$. Now assume $K\in \calG$ is virtually $\Z^{k}$. We claim that $\calF\cap \calG\cap 
K= \calF_{k-1}\cap K$. The inclusion $\calF\cap \calG\cap K \subset \calF_{k-1}\cap K$ is clear since  $\calF\subset\calF_{k-1}$. For the other inclusion let $M\in \calF_{k-1}\cap K$. Since $K\leq N_G(L)$ we get $\calF_{k-1}\cap K \subseteq \calF_{k-1}\cap N_G(L) =\calF$, on the other hand $M\leq K\in \calG$, therefore $M\in \calF\cap \calG\cap K$. This establishes the claim. We conclude that \[\gd_{\calF\cap \calG\cap K}(K)=\gd_{\calF_{k-1}\cap  K}(K)\leq k+k-1=2k-1\] where the inequality follows from \cite[Proposition 1.3]{tomasz}. 
\end{proof}
\begin{lemma}\label{lemma:auxiliar:dimensions:boundary}
Let $S$ be a connected, compact (possibly with punctures), oriented surface with $b>0$ boundary components. Assume that $S$ has negative Euler characteristic, and let  $G=\mcg(S)$.  Consider $L$ be a free abelian subgroup of $G$ of rank $k\geq 2$ such that $N_G[L]=N_G(L)$ and  $\theta(L)$ is a subgroup of $\mcg(\hat{S})[3]$ (see \cref{commensu:equal:normalizer}). Define $\calG$ as the smallest family containing  $\{K\subseteq N_G(L)| K\in  \calF_{k}-\calF_{k-1}, K\sim L\}$, and $\calF=\calF_{k-1}\cap N_G(L)$. Then
\begin{enumerate}[i)]
    \item $\gd_{\calG}(N_{G}(L))\leq \vcd(W_G(L))$,
    \item $\gd_{\calF \cap \calG}(N_{G}(L)) \leq \vcd(W_G(L))+2k-1$.
\end{enumerate}
\end{lemma}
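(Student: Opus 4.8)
The plan is to transport the closed–surface estimate \cref{lemma:auxiliar:dimensions} across the capping central extension $1\to\Z^b\to \mcg(S)\xrightarrow{\theta}\mcg(\hat S,\Omega)\to 1$, whose kernel $\Z^b$ is generated by boundary–parallel Dehn twists and is therefore central in $\mcg(S)$; in particular $\Z^b\subseteq N_G(L)$, where $G=\mcg(S)$. Write $\hat G=\mcg(\hat S,\Omega)$ and $\overline L=\theta(L)$. Since $\theta(L)\subseteq\mcg(\hat S)[3]$ is torsion free, $\overline L$ is free abelian of some rank $m$ with $0\le m\le k$, and it lies in $\hat G\cap\mcg(\hat S)[3]$. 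We may also assume, as holds for the subgroups $L$ produced by \cref{commensu:equal:normalizer}, that $\Z^b\cap L$ is a direct summand of $\Z^b$, so that $\Z^b/(\Z^b\cap L)$ is free abelian of rank $c:=b-\rank(\Z^b\cap L)$; note that $k=m+c$.

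First I would promote \cref{normalizer:boundary}: restricting the extension to $N_G(L)$, observing that $\theta^{-1}(\overline L)\cap N_G(L)=\Z^bL$, and dividing by $L$ yields a central extension of Weyl groups
\[1\to \Z^b/(\Z^b\cap L)\to W_G(L)\xrightarrow{\overline\theta} W_{\hat G}(\overline L)\to 1.\]
By \cref{vcd:weyl:and:normalizer:boundary}, $W_G(L)$ is a virtual duality group, and so is $W_{\hat G}(\overline L)$, being of finite index in $W_{\mcg(\hat S)}(\overline L)$, which is one by \cref{vcd:weyl:and:normalizer} (or by Harer's theorem when $\overline L$ is trivial). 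Hence \cite[Theorem~3.5]{BE73} gives $\vcd(W_G(L))=\vcd\big(W_{\hat G}(\overline L)\big)+c$.

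The heart of the matter is the equality $\gd_{\calF_0}\big(W_{\hat G}(\overline L)\big)=\vcd\big(W_{\hat G}(\overline L)\big)$. If $m\ge 2$, this follows from \cref{lemma:auxiliar:dimensions}(i) applied to the free abelian subgroup $\overline L\le\mcg(\hat S)[3]$: it produces a model for $E_{\calF_0}W_{\mcg(\hat S)}(\overline L)$ of dimension $\vcd\big(W_{\mcg(\hat S)}(\overline L)\big)$, and restricting its action to the finite–index subgroup $W_{\hat G}(\overline L)$ gives a model of the same dimension while leaving $\vcd$ unchanged. The cases $m=1$ and $m=0$ are handled the same way, using the rank–one analogue of \cref{lemma:auxiliar:dimensions}(i) from \cite{Nucinkis:Petrosyan} and the equality $\gd_{\calF_0}(\mcg(\hat S))=\vcd(\mcg(\hat S))$ (the $n=0$ case for mapping class groups; cf.\ \cite{Ha86,AMP14}), respectively. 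Granting this, statement (i) is obtained as in \cref{lemma:auxiliar:dimensions}: the family $\calG$ is exactly the pull–back along $p\colon N_G(L)\to W_G(L)$ of the finite subgroups of $W_G(L)$ (every $K\in\calG$ has finite image in $W_G(L)$, and conversely $p^{-1}$ of a finite subgroup is virtually $\Z^k$ and commensurable with $L$), so a model for $E_{\calF_0}W_G(L)$ is a model for $E_\calG N_G(L)$; applying \cite[Proposition~2.4]{Nucinkis:Petrosyan} to the central extension above (its kernel being finitely generated free abelian of rank $c$, hence with a $c$–dimensional model for $E_{\calF_0}$) and using $\calF_0\subseteq\overline\theta^{-1}(\calF_0)$ gives
\[\gd_\calG(N_G(L))\le\gd_{\calF_0}(W_G(L))\le\gd_{\calF_0}\big(W_{\hat G}(\overline L)\big)+c=\vcd\big(W_{\hat G}(\overline L)\big)+c=\vcd(W_G(L)).\]

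Statement (ii) would then be deduced from (i) exactly as \cref{lemma:auxiliar:dimensions}(iii) is deduced from its item (i), an argument insensitive to the boundary: applying \cite[Proposition~5.1(i)]{LW12} to $\calF\cap\calG\subseteq\calG$ gives $\gd_{\calF\cap\calG}(N_G(L))\le\gd_\calG(N_G(L))+d$ for any $d$ bounding $\gd_{\calF\cap\calG\cap K}(K)$ over $K\in\calG$; for $K$ virtually $\Z^t$ with $t\le k-1$ one has $K\in\calF\cap\calG$ and the bound is $0$, whereas for $K$ virtually $\Z^k$ one has $\calF\cap\calG\cap K=\calF_{k-1}\cap K$ and the bound is $k+(k-1)=2k-1$ by \cite[Proposition~1.3]{tomasz}, so $d=2k-1$ works and, with (i), $\gd_{\calF\cap\calG}(N_G(L))\le\vcd(W_G(L))+2k-1$. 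The main obstacle I anticipate is precisely the bookkeeping forced by the possible drop in rank $m<k$: capping can absorb part of $L$ into the central $\Z^b$, and one has to verify that the lost rank re–emerges exactly as the free abelian central kernel $\Z^b/(\Z^b\cap L)$ of the Weyl–group extension, and that the identity $\gd_{\calF_0}=\vcd$ survives at each step — which is why the virtual–duality input of \cref{vcd:weyl:and:normalizer:boundary} and the extension estimate \cite[Proposition~2.4]{Nucinkis:Petrosyan} are needed, rather than any naive monotonicity of $\gd$ in the family.
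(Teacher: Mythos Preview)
Your argument is correct and follows essentially the same route as the paper: reduce item (i) to $\gd_{\calF_0}(W_G(L))\le\vcd(W_G(L))$ via the pull-back identification of $\calG$, then use the Weyl-group central extension coming from capping together with \cite[Proposition~2.4]{Nucinkis:Petrosyan} and the closed-surface input $\gd_{\calF_0}(W_{\hat G}(\overline L))\le\vcd(W_{\hat G}(\overline L))$; item (ii) is then literally the proof of \cref{lemma:auxiliar:dimensions}(iii). Two minor remarks: your parenthetical identity $k=m+c$ is wrong (from $1\to\Z^b\cap L\to L\to\overline L\to 1$ one gets $\rank(\Z^b\cap L)=k-m$, hence $c=b-(k-m)$), but you never use it; and your case split on $m=\rank(\overline L)\in\{0,1,\ge 2\}$ is a genuine refinement, since the paper simply cites \cref{lemma:auxiliar:dimensions}(i) without addressing the possibility that $\overline L$ has rank $<2$.
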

\begin{proof}
First we show that $\gd_{\calG}(N_{G}(L))\leq \vcd(W_G(L))$. A model for $E_{\calF_0}W_G(L)$ is a model for $E_{\calG}N_{G}(L)$ via the action given by the projection $p\colon N_{G}(L)\to W_{G}(L)$ since the family $\calG$ is the pullback under $p$, of the family of finite subgroups of $W_{G}(L)$.
 Thus it is enough to prove that $W_G(L)$ admits a model for $E_{\calF_0}W_G(L)$ of dimension $\vcd(W_G(L))$.
Denote $\hat G=\mcg(\hat S,\Omega)$  following the notation of section \ref{NON-EMPTY}. By \cref{normalizer:boundary} we get the following short exact sequence 
$$1\to \Z^b/(\Z^b\cap L)\to W_{G}(L)\to W_{\hat{G}}(\theta_S(L))\to 1.$$  
By \cref{vcd:weyl:and:normalizer} $a)$ and \cite[Theorem 3.5]{BE73} it follows that in fact we have a short exact sequence of virtual duality groups, then by \cite[Theorem 3.5]{BE73} we have $\vcd(W_{G}(L))=\vcd( W_{\hat{G}}(\theta_S(L)))+ \vcd(\Z^b/(\Z^b\cap L))$.
Note that $\calF_0$ of $W_G(L)$ is contained in the pullback family  of finite subgroups of $W_{\hat{G}}(\theta_S(L))$, thus from \cite[Proposition 2.4] {Nucinkis:Petrosyan} we get the following 
\begin{equation*}
\begin{split}
    \gd_{\calF_0}(W_{G}(L))&\leq \gd_{\calF_0}(W_{\hat{G}}(\theta_S(L))+\gd_{\calF_0}(\Z^b/(\Z^b\cap L))\\
    &\leq \vcd(W_{\hat{G}}(\theta_S(L)))+\vcd (\Z^b/(\Z^b\cap L)), \text{by \cref{lemma:auxiliar:dimensions} $i)$}\\
    & =\vcd(W_G(L)).
\end{split}
\end{equation*}

The proof of item $ii)$ is completely analogous to the proof of item $iii)$ of \cref{lemma:auxiliar:dimensions}.  
\end{proof}

We now have all the ingredients to prove \cref{Thm:dimension:bound:main}.

\dimensionbound*

%\begin{theorem}\label{virtually:abelian:dim:mod} 
%Let $S$ be a connected compact orientable surface and possibly with a finite number of punctures. Assume that $S$ has negative Euler characteristic.  Then for all $ n\in \mathbb{N}$ we have  $\gd_{\calF_n}(\mcg(S))\leq \vcd (\mcg(S)) +n$.
%Let $S$ be a connected (possibly non-empty boundary), compact, oriented  and possibly with a finite number of punctures. Assume that $S$ has negative Euler characteristic.  Then for all $ n\in \mathbb{N}$ we have  $\gd_{\calF_n}(\mcg(S))\leq \vcd (\mcg(S)) +n$.
%\end{theorem}
\begin{proof}
We use the notation  $G=\mcg(S)$ and proceed by  induction on $n$. 
The case $n=0$ was proved in \cite[Theorem 1.1]{AMP14} for a compact surface without punctures and \cite{Ha86} for a surface compact with a finite number of punctures. The case $n=1$ was proved in  \cite[Theorem 1.4]{Nucinkis:Petrosyan}.
Suppose that the inequality is true for all $n\leq k-1$. We prove that the inequality is also true for $n=k$.
 Let $\sim$ be the equivalence relation on $\calF_{k}-\calF_{k-1}$ defined by commensurability, and let $I$ be  a complete set of representatives of the conjugacy classes in  $(\calF_{k}-\calF_{k-1})/\sim$. Then by \Cref{Luck:weiermann} the following homotopy  $G$-push-out gives a model $X$ of $E_{\calF_{k}}G$
   \[
\begin{tikzpicture}
  \matrix (m) [matrix of math nodes,row sep=3em,column sep=4em,minimum width=2em]
  {
     \displaystyle\bigsqcup_{H\in I} G\times_{N_{G}[H]}E_{N_{G}[H] \cap\calF_{k-1}}N_{G}[H] & E_{\calF_{k-1}}G \\
      \displaystyle\bigsqcup_{H\in I} G\times_{N_{G}[H]}E_{ \calF_{k}[H]}N_{G}[H] & X \\};
  \path[-stealth]
    (m-1-1) edge node [left] {$\displaystyle\bigsqcup_{H\in I}id_{G}\times_{N_G[H]}f_{[H]}$} (m-2-1) (m-1-1.east|-m-1-2) edge  node [above] {} (m-1-2)
    (m-2-1.east|-m-2-2) edge node [below] {} (m-2-2)
    (m-1-2) edge node [right] {} (m-2-2);
\end{tikzpicture}
\]
It follows that
\begin{equation*} %\label{eq1}
\begin{split}
\gd_{N_{G}[H] \cap\calF_{k}}&(G)
\leq \dim(X)\\
& \leq \max \{\gd_{N_{G}[H] \cap\calF_{k-1}}(N_{G}[H])+1, \gd_{\calF_{k-1}}(G), \gd_{\calF_{k}[H]}(N_{G}[H])|H\in I\}, \text{ by \Cref{conditions:exist:pushout}} \\
 & \leq \max \{ \gd_{\calF_{k-1}}(G)+1, \gd_{\calF_{k}[H]}(N_{G}[H])|H\in I\}, \text{ since } \gd_{N_{G}[H] \cap\calF_{k-1}}(N_{G}[H])\leq \gd_{\calF_{k-1}}(G)\\
 & \leq \max \{ \vcd(G)+k, \gd_{\calF_{k}[H]}(N_{G}[H])|H\in I\},  \text{ by induction hypothesis.}
\end{split}
\end{equation*}
Hence to prove that  $\gd_{\calF_{k}}(G) \leq \vcd(G)+k$ it is enough to prove that for all $H\in I$    $$\gd_{\calF_{k}[H]}(N_{G}[H])\le  \vcd (G)+k.$$

Let $H\in I$ and take $L$ commensurable with $ H$ as in  \cref{norrmalizer:commensurator:equal} and \cref{commensu:equal:normalizer}, such that $N_{G}[H]=N_{G}[L]=N_{G}(L)$. The family $$\calF_{k}[L]=\{K\subseteq N_G(L)| K\in  \calF_{k}-\calF_{k-1}, K\sim L\}\cup (\calF_{k-1}\cap N_G(L))$$ can be written as the union of two families $\calF_{k}[L]=\calG \cup \calF$, where $\calG$ is the smallest family containig  $\{K\subseteq N_G(L)| K\in  \calF_{k}-\calF_{k-1}, K\sim L\}\cup (\calF_0\cap N_G(L))$, and $\calF=\calF_{k-1}\cap N_G(L)$. By \Cref{lemma:union:families}  the following homotopy  $N_G(L)$-push-out gives a model $Y$ for $E_{\calF_{k}[L]}N_G(L)$

  \[
\begin{tikzpicture}
  \matrix (m) [matrix of math nodes,row sep=3em,column sep=4em,minimum width=2em]
  {
     E_{\calF \cap \calG}N_{G}(L) & E_{\calF}N_{G}(L) \\
      E_{\calG}N_{G}(L) & Y \\};
  \path[-stealth]
    (m-1-1) edge node [left] {} (m-2-1) (m-1-1.east|-m-1-2) edge  node [above] {} (m-1-2)
    (m-2-1.east|-m-2-2) edge node [below] {} (m-2-2)
    (m-1-2) edge node [right] {} (m-2-2);
\end{tikzpicture}
\]
It follows that 
\begin{equation*}% \label{eq2}
\begin{split}
\gd_{\calF_{k}[H]}(N_{G}[H])&=\gd_{\calF_{k}[L]}(N_{G}(L))\\ &\leq \dim(Y)\\ &\leq\max \{\gd_{\calF}(N_{G}(L)), \gd_{\calF \cap \calG}(N_{G}(L))+1, \gd_{\calG}(N_{G}(L))\} \\
 & \leq \max \{ \gd_{\calF_{k-1}}(G), \gd_{\calF \cap \calG}(N_{G}(L))+1, \gd_{\calG}(N_{G}(L))\}\\
 &\leq \max \{ \gd_{\calF_{k-1}}(G), \vcd (W_G(L))+2k, \vcd (W_G(L))\},\text{ by \Cref{lemma:auxiliar:dimensions} and \cref{lemma:auxiliar:dimensions:boundary}}\\
&\leq \max \{ \gd_{\calF_{k-1}}(G), \vcd (W_G(L))+2k\},\\
&\leq  \max \{ \gd_{\calF_{k-1}}(G), \vcd (N_{G}(L))+k\}, \text{ by \Cref{vcd:weyl:and:normalizer} and \cref{vcd:weyl:and:normalizer:boundary}}\\
&\leq  \max \{ \gd_{\calF_{k-1}}(G), \vcd (G)+k\}, \text{since } \vcd (N_{G}(L))\leq \vcd(G)
\end{split}
\end{equation*}
Finally,  using the induction hypothesis $\gd_{\calF_{k-1}}(G)\leq \vcd(G)+k-1$, we obtain 
\[\gd_{\calF_{k}[H]}(N_{G}[H])  \leq  \vcd(G)+k.\]\end{proof} 
%\textcolor{red}{PL: primera versión lista para revisar}
%\begin{remark}We have some remarks
%\begin{itemize}
%\item For $S$ a closed orientable surface of genus $g$ and with $b\geq 0$ punctures, by \cite[Theorem A]{BirmanLubotzkyMcCarthy83} (see \cref{maximal:rank:abelian:subgroup} below) every free abelian subgroup of $\mcg(S)$ has rank at most $r=3g-3+b$. Then for all $n\ge r$ we have that $\calF_n=\calF_r$ and $\gd_{\calF_n}(\mcg(S))=\gd_{\calF_r}(\mcg(S))$. It follows from \cref{virtually:abelian:dim:mod} that $\gd_{\calF_n}(\mcg(S))\leq \vcd(\mcg(S))+r$ for all $n\geq r$. 

%\item Let $N$ be a non-orientable, compact, connected  surface possibly with non-empty boundary and finitely many punctures. Assume $N$ has negative Euler characteristic.  There is a monomorphism  $\iota\colon \mcg(N)\to \mcg(M)$, where $M$ is the  orientable double cover of $N$ and has negative Euler characteristic (see \cref{non-orientable:surfaces}). We can identify $\mcg(N)$ with the image of $\iota$, then by \cref{virtually:abelian:dim:mod} we have $\gd_{\calF_n\cap\mcg(N)}(\mcg(N))\leq \vcd(\mcg(M))+n$ for all $n\in \mathbb{N}$.
%\end{itemize}
%\end{remark}

%%%%%%%%%%%%%%%%%%%%%%%%%%%%%%%%%%%%%%%%%%%%%%%%%
\bibliographystyle{alpha} %harvard, unsrt, alpha
\bibliography{mybib}
\end{document}